\documentclass[12pt]{amsart}

\setcounter{secnumdepth}{1}
\usepackage[matrix,arrow,curve,frame]{xy}
\usepackage{amsmath,amsthm,amssymb,enumerate}
\usepackage{latexsym}
\usepackage{amscd}
\usepackage[colorlinks=false]{hyperref}
\usepackage{euscript}

\setlength{\oddsidemargin}{0in} \setlength{\evensidemargin}{0in}
\setlength{\marginparwidth}{0in} \setlength{\marginparsep}{0in}
\setlength{\marginparpush}{0in} \setlength{\topmargin}{0in}
\setlength{\headheight}{0pt} \setlength{\headsep}{0pt}
\setlength{\footskip}{.3in} \setlength{\textheight}{9.2in}
\setlength{\textwidth}{6.5in} \setlength{\parskip}{4pt}

\newtheorem{thm}[subsection]{Theorem}

\newtheorem{cor}[subsection]{Corollary}
\newtheorem{lemma}[subsection]{Lemma}

\newtheorem{remark}[subsection]{Remark}

\theoremstyle{definition}
\newtheorem{example}[subsection]{Example}

\numberwithin{equation}{section}

\def\ZZ{{\mathbb Z}}

\def\cO{{\cal O}}

\def\cP{{\cal P}}
\def\cA{{\cal A}}

\def\ra{\rightarrow}
\def\bra{\langle}
\def\ket{\rangle}


\def\cA{{\mathcal A}}
\def\cB{{\mathcal B}}

\def\cF{{\mathcal F}}

\def\cH{{\mathcal H}}
\def\cI{{\mathcal I}}
\def\cJ{{\mathcal J}}

\def\cM{{\mathcal M}}
\def\cN{{\mathcal N}}
\def\cO{{\mathcal O}}
\def\cP{{\mathcal P}}

\def\cR{{\mathcal R}}
\def\cS{{\mathcal S}}

\def\cV{{\mathcal V}}
\def\cW{{\mathcal W}}


\def\gg{{\mathfrak g}}

\def\gl{{\mathfrak l}}

\def\go{{\mathfrak o}}
\def\gp{{\mathfrak p}}

\def\gs{{\mathfrak s}}

\def\ch{{\text{ch}}}

\newfont{\german}{eufm10}

\begin{document}
\pagestyle{plain}

\title
{Orbifolds of symplectic fermion algebras}

\author{Thomas Creutzig and Andrew R. Linshaw}
\address{Department of Mathematics, University of Alberta}
\email{creutzig@ualberta.ca}
\address{Department of Mathematics, University of Denver}
\email{andrew.linshaw@du.edu}


{\abstract We present a systematic study of the orbifolds of the rank $n$ symplectic fermion algebra $\cA(n)$, which has full automorphism group $Sp(2n)$. First, we show that $\cA(n)^{Sp(2n)}$ and $\cA(n)^{GL(n)}$ are $\cW$-algebras of type $\cW(2,4,\dots, 2n)$ and $\cW(2,3,\dots, 2n+1)$, respectively. Using these results, we find minimal strong finite generating sets for $\cA(mn)^{Sp(2n)}$ and $\cA(mn)^{GL(n)}$ for all $m,n\geq 1$. We compute the characters of the irreducible representations of $\cA(mn)^{Sp(2n)\times SO(m)}$ and $\cA(mn)^{GL(n)\times GL(m)}$ appearing inside $\cA(mn)$, and we express these characters using partial theta functions. Finally, we give a complete solution to the {\it Hilbert problem} for $\cA(n)$; we show that for any reductive group $G$ of automorphisms, $\cA(n)^G$ is strongly finitely generated.}

\keywords{free field algebra, symplectic fermions, invariant theory; reductive group action; orbifold; strong finite generation; $\cW$-algebra; character formula; theta function}
\maketitle
\section{Introduction}

Vertex algebras are a fundamental class of algebraic structures that arose out of conformal field theory and have applications in a diverse range of subjects. Given a vertex algebra $\cV$ and a group $G$ of automorphisms of $\cV$, the invariant subalgebra $\cV^G$ is called an {\it orbifold} of $\cV$. Many interesting vertex algebras can be constructed either as orbifolds or as extensions of orbifolds. A spectacular example is the {\it Moonshine vertex algebra} $V^{\natural}$, whose full automorphism group is the Monster, and whose graded character is $j(\tau) - 744$ where $j(\tau)$ is the modular invariant $j$-function \cite{FLM}.

In physics, {\it rational} vertex algebras, of which $V^{\natural}$ is an example, correspond to rational two-dimensional conformal field theories. Other well-known examples include the Virasoro minimal models \cite{GKO,WaI}, affine vertex algebras at positive integer level \cite{FZ}, lattice vertex algebras associated to positive-definite even lattices \cite{D}, and certain families of $\cW$-algebras \cite{ArI,ArII}. A rational vertex algebra $\cV$ has only finitely many irreducible, admissible modules, and any admissible $\cV$-module is completely reducible. On the other hand, many interesting vertex algebras admit modules which are reducible but indecomposable. Following \cite{CR}, we shall call such vertex algebras {\it logarithmic}, as the corresponding conformal field theories often have logarithmic singularities in their correlation functions. For many years after Zhu's thesis appeared in 1990 \cite{Zh}, it was believed that rationality and the $C_2$-cofiniteness conditions were equivalent; see for example \cite{DLMII,ABD}. However, this was disproven by the construction of the $\cW_{p,q}$-triplet algebras \cite{KI,FGST}, which are logarithmic and $C_2$-cofinite \cite{A,AM,TW}. 

One of the first logarithmic conformal field theories studied in the physics literature was the {\it symplectic fermion} theory \cite{KII}. The symplectic fermion algebra $\cA(n)$ of rank $n$ has odd generators $e^i, f^i$ for $i=1,\dots, n$ satisfying operator product expansions $$e^i(z) f^j(w) \sim \delta_{i,j} (z-w)^{-2}.$$ It is an important example of a free field algebra, and the simplest triplet algebra $\cW_{2,1}$ coincides with the orbifold $\cA(1)^{\mathbb{Z}/2\mathbb{Z}}$. It is also known that $\cA(n)^{\mathbb{Z}/2\mathbb{Z}}$ is logarithmic and $C_2$-cofinite \cite{A}, but little is known about the structure and representation theory of more general orbifolds of $\cA(n)$.

Our goal in this paper is to carry out a systematic study of the orbifolds $\cA(n)^G$, where $G$ is any reductive group of automorphisms of $\cA(n)$. The full automorphism group of $\cA(n)$ is the symplectic group $Sp(2n)$. In order to describe $\cA(n)^G$ for a general $G$, a detailed understanding of the structure and representation theory of $\cA(n)^{Sp(2n)}$ is necessary. As we shall see, all irreducible, admissible $\cA(n)^{Sp(2n)}$-modules are highest-weight modules, and $\cA(n)^G$ is completely reducible as an $\cA(n)^{Sp(2n)}$-module.

Our study of $\cA(n)^{Sp(2n)}$ is based on {\it classical invariant theory}, and follows the approach developed in \cite{LI,LII,LIII,LIV,LV}. First, $\cA(n)$ admits an $Sp(2n)$-invariant filtration such that $\text{gr}(\cA(n))\cong \bigwedge \bigoplus_{k\geq 0} U_k$ as supercommutative rings. Here each $U_k$ is a copy of the standard $Sp(2n)$-representation $\mathbb{C}^{2n}$. As a vector space, $\cA(n)^{Sp(2n)}$ is isomorphic to $R = \big(\bigwedge \bigoplus_{k\geq 0} U_k\big)^{Sp(2n)}$, and we have isomorphisms of graded commutative rings $$\text{gr}(\cA(n)^{Sp(2n)}) \cong \text{gr}(\cA(n))^{Sp(2n)} \cong R.$$ In this sense, we regard $\cA(n)^{Sp(2n)}$ as a {\it deformation} of the classical invariant ring $R$.

By an odd analogue of Weyl's first and second fundamental theorems of invariant theory for the standard representation of $Sp(2n)$, $R$ is generated by quadratics $$\{q_{a,b}|\ 0\leq a\leq b\},$$ and the ideal of relations among the $q_{a,b}$'s is generated by elements $$\{p_I|\ I = (i_0,  \dots, i_{2n+1}),\ 0\leq i_0 \leq \cdots \leq i_{2n+1}\}$$ of degree $n+1$, which are analogues of Pfaffians. We obtain a corresponding strong generating set $\{\omega_{a,b}\}$ for $\cA(n)^{Sp(2n)}$, as well as generators $\{P_I\}$ for the ideal of relations among the $\omega_{a,b}$'s, which correspond to the classical relations with suitable quantum corrections. In fact, there is a more economical strong generating set $\{j^{2k} = \omega_{0,2k}|\ k\geq 0\}$, and the sets $\{\omega_{a,b}|\ 0\leq a\leq b\}$ and $\{\partial^i j^{2k}|\ i,k\geq 0\}$ are related by a linear change of variables. The relation of minimal weight among the generators occurs at weight $2n+2$, and corresponds to $I = (0,0,\dots,0)$.

A key technical result in this paper is the analysis of the quantum corrections of the above classical relations. For each $p_I$, there is a certain correction term $R_I$ appearing in $P_I$ which we call the {\it remainder}. We show that $R_I$ satisfies a recursive formula which implies that the relation of minimal weight has the form \begin{equation} \label{introdecoup} j^{2n} = Q(j^0, j^2,\dots, j^{2n-2}).\end{equation} Here $Q$ is a normally ordered polynomial in $j^0, j^2,\dots j^{2n-2}$ and their derivatives. We call \eqref{introdecoup} a {\it decoupling relation}, and by applying the operator $j^2 \circ_1$ repeatedly, we can construct higher decoupling relations $$j^{2m} = Q_m(j^0, j^2, \dots, j^{2n-2})$$ for all $m>n$. This shows that $\{j^0, j^2, \dots, j^{2n-2}\}$ is a minimal strong generating set for $\cA(n)^{Sp(2n)}$, and in particular $\cA(n)^{Sp(2n)}$ is of type $\cW(2,4, \dots, 2n)$; see Theorem \ref{maincor}. Using this result, we give a minimal strong generating set for $\cA(mn)^{Sp(2n)}$ 
for 
all $m,n\geq 1$. Similarly, using the invariant theory of $GL(n)$ we show that $\cA(n)^{GL(n)}$ is of type $\cW(2,3,\dots, 2n+1)$, and we give a minimal strong generating set for $\cA(mn)^{GL(n)}$ for all $m,n\geq 1$. 

\subsection{Character decompositions} By a general theorem of Kac and Radul \cite{KR}, $\cA(mn)$ decomposes as a sum of irreducible $\cA(mn)^{G}$-modules for every reductive group $G\subset Sp(2mn)$ (see also \cite{DLMI}). Such decompositions are very useful for computing the characters of orbifolds of free field algebras; see for example \cite{WaII,KWY}. More recently, the characters of some orbifolds of the $\beta\gamma$-system of rank $n$ were computed in \cite{BCR}. The task was to find Fourier coefficients of certain negative index meromorphic Jacobi forms, and one way to solve the decomposition problem used the denominator identity of affine Lie superalgebras. We will instead use the denominator identity of the finite-dimensional Lie superalgebra $\gs\gp\go(2n|m)$ to find the character decomposition when $G=Sp(2n)\times SO(m)$, and the denominator identity of $\gg\gl(n|m)$ to find the character decomposition for $G=GL(n)\times GL(m)$. In all these cases we obtain explicit character formulas, and they are expressed using partial theta functions. The character of $\cA(mn)$ is
\[
q^{\frac{mn}{12}}\prod_{j=1}^\infty (1+q^j)^{2mn} = \left(\frac{\eta\left(q^2\right)}{\eta\left(q\right)}\right)^{2mn}
\]
with the Dedekind eta function 
$\eta(q)=q^{\frac{1}{24}}\prod\limits_{n=1}^\infty (1-q^n)$. 

In order to perform the character decomposition, we need to refine the grading. 
For this let $\gg_0=\gg\gl(m)\oplus \gg\gl(n)$ or $\gg_0=\gs\gp(2m)\oplus\gs\go(n)$ be the even subalgebra of
$\gg=\gg\gl(m|n)$, respectively $\gg=\gs\gp\go(2m|n)$. Then the set of odd roots $\Delta_1$ of $\gg$ is contained in the weight lattice of $\gg_0$, and the $\cA(mn)$-algebra character, graded by both the weight lattice of $\gg_0$ and by conformal dimension, is
\begin{equation}\nonumber
\ch[\cA(mn)]= q^{\frac{mn}{12}} \prod_{\alpha\in\Delta_1} \prod_{n=1}^\infty \left(1+e^{\alpha}q^n\right).
\end{equation}
The character can be rewritten as 
\begin{equation}\nonumber
\begin{split}
\ch[\cA(mn)]&= e^{-\rho_1}q^{\frac{mn}{12}} \prod_{\alpha\in\Delta^+_1} \frac{e^{\frac{\alpha}{2}}}{\left(1+e^{-\alpha}\right)}\prod_{n=1}^\infty \left(1+e^{\alpha}q^n\right)\left(1+e^{-\alpha}q^{n-1}\right)
= \frac{1}{e^{\rho_1}}\prod_{\alpha\in\Delta^+_1} \frac{\vartheta\left(e^\alpha ; q\right)}{\left(1+e^{-\alpha}\right)\eta(q)},
\end{split}
\end{equation}
where $\rho_1$ is the odd Weyl vector of $\gg$, $\Delta_1^+$ is the set of its positive odd roots and
\begin{equation}\nonumber 
\vartheta(z; q) = \sum_{n\in\ZZ} z^{n+\frac{1}{2}}q^{\frac{1}{2}\left(n+\frac{1}{2}\right)^2}
\end{equation}
is a standard Jacobi theta function. 
Let $P^+$ be the set of dominant weights of the even subalgebra $\gg_0$. 
Then our result, Theorem \ref{thm:ch}, states that the graded character of $\cA(mn)$ decomposes as 
\[
\ch[\cA(mn)] = \sum_{\Lambda\in L\cap P^+} \text{ch}_{\Lambda}  B_\Lambda.
\]
Here $\text{ch}_{\Lambda}$ is the irreducible highest-weight representation of $\gg_0$ of highest-weight $\Lambda$.
The branching function is
\[
B_\Lambda = \frac{1}{\eta(q)^{|\Delta^+_1|}}\frac{|W^\sharp|}{|W|}\sum_{\omega\in W}\epsilon(w) \sum_{\left((n_\alpha),(m_\beta)\right) \in I_{\omega(\Lambda+\rho_0)-\rho_0}}  q^{\frac{1}{2}\left(n_\alpha+\frac{1}{2}\right)^2} P_{m_\beta}(q)
\]
and
\[
P_n(q):= q^{\frac{1}{2}\left(n+\frac{1}{2}\right)^2}\sum\limits_{m=0}^\infty (-1)^{m} q^{\frac{1}{2}\left(m^2-2m\left(n+\frac{1}{2}\right)\right)}
\]
is a {\it partial theta function}. Further, $\rho_0$ is the Weyl vector of $\gg_0$, $L$ is the root lattice of $\gg$, $W$ is its Weyl group and $W^\sharp$ the subgroup of $W$ corresponding to the {\it larger} subalgebra of $\gg_0$ and finally $I_\lambda$ is a subset of $\mathbb Z^{|\Delta_1^+|}$. More details on these objects are found in Section \ref{sec:ch}.
Summing over all representations corresponding to either $\gs\go(m)$ or $\gg\gl(m)$, one then obtains the character decomposition for $G=Sp(2n)$, and respectively for $G=GL(n)$. In particular, the character formula for $\cA(n)^{Sp(2n)}$ implies that it is {\it freely generated}; there are no nontrivial normally ordered relations among the generators $j^0, j^2, \dots, j^{2n-2}$. This yields an explicit classification of the irreducible, admissible $\cA(n)^{Sp(2n)}$-modules.

One of the important properties of rational vertex algebras is that characters of modules are the components of a vector-valued modular form for the modular group $SL(2, \mathbb Z)$ \cite{Zh}. In the non-rational case, the relation to modularity is unclear. However, there are examples of affine vertex superalgebras whose module characters are built out of mock modular forms \cite{KWI}. For the singlet vertex algebra $\cW(2, 2p-1)$ for $p\geq 2$, the module characters are, up to a prefactor of $\eta(q)^{-1}$, partial theta functions \cite{F}, and $\cW(2, 3)$ coincides with $\cA(1)^{GL(1)}$. The characters are then not modular, but their modular group action is still known \cite{Zw,CM}. They carry an infinite-dimensional modular group representation that is compatible with the expectations of vertex algebras and conformal field theory in the cases of a family of vertex superalgebras \cite{AC} and the singlet vertex algebras \cite{CM}.

\subsection{The Hilbert problem for $\cA(n)$}

A vertex algebra $\cV$ is called {\it strongly finitely generated} if there exists a finite set of generators such that the set of iterated Wick products of the generators and their derivatives spans $\cV$. Recall Hilbert's theorem that if a reductive group $G$ acts on a finite-dimensional complex vector space $V$, the ring $\cO(V)^G$ of invariant polynomial functions is finitely generated \cite{HI,HII}. The analogous question for vertex algebras is the following. 
Given a simple, strongly finitely generated vertex algebra $\cV$ and a reductive group $G$ of automorphisms of $\cV$, is $\cV^G$ strongly finitely generated? This problem was solved affirmatively when $\cV$ is the Heisenberg vertex algebra $\cH(n)$ \cite{LIII,LIV}, and when $\cV$ is the $\beta\gamma$-system $\cS(n)$ or free fermion algebra $\cF(n)$ \cite{LV}. The approach is the same in all these cases and is based on the following observations.
\begin{enumerate}
\item The Zhu algebra of  $\cV^{\text{Aut}(\cV)}$ is abelian, which implies that its irreducible, admissible modules are all highest-weight modules.
\item For any reductive $G$, $\cV^G$ has a strong generating set that lies in the direct sum of finitely many irreducible $\cV^{\text{Aut}(\cV)}$-modules.
\item The strong finite generation of $\cV^{\text{Aut}(\cV)}$ implies that these modules all have a certain finiteness property.
\end{enumerate}
In the case $\cV = \cA(n)$, these conditions are satisfied, so the same approach yields a solution to the Hilbert problem for $\cA(n)$; see Theorem \ref{sfg}. The method is essentially constructive and it reveals how $\cA(n)^G$ arises as an extension of $\cA(n)^{Sp(2n)}$ by irreducible modules. 

A {\it free field algebra} is any vertex algebra of the form $$\cH(n) \otimes \cF(m) \otimes \cS(r) \otimes \cA(s),\qquad m,n,r,s \geq 0,$$ where $\cV(0)$ is declared to be $\mathbb{C}$ for $\cV = \cH, \cS, \cF, \cA$. Many interesting vertex algebras have free field realizations, that is, embeddings into such algebras. In addition, many vertex algebras can be regarded as {\it deformations} of free field algebras. For example, if $\gg = \gg_0 \oplus \gg_1$ is a Lie superalgebra with a nondegenerate, supersymmetric bilinear form, the corresponding affine vertex superalgebra is a deformation of $\cH(n) \otimes \cA(m)$, where $n = \text{dim}(\gg_0)$ and $2m = \text{dim}(\gg_1)$. By combining Theorem \ref{sfg} with the results of \cite{LIII,LIV,LV}, one can establish the strong finite generation of a broad class of orbifolds of free field algebras, as well as their deformations. This has an application to the longstanding problem of describing {\it coset vertex algebras}, which appears in a separate paper \cite{CL}.

\section{Vertex algebras}
We shall assume that the reader is familiar with the basics of vertex algebra theory, which has been discussed from various points of view in the literature (see for example \cite{B,FLM,K,FBZ}). We will follow the formalism developed in \cite{LZ} and partly in \cite{LiI}, and we will use the notation of the previous paper \cite{LI} of the second author. In particular, by a {\it vertex algebra}, we mean a quantum operator algebra $\cA$ in which any two elements $a,b$ are local, meaning that $(z-w)^N[a(z), b(w)] = 0$ for some positive integer $N$. This is well known to be equivalent to the notion of vertex algebra in \cite{FLM}. The operators product expansion (OPE) formula is given by
$$a(z)b(w)\sim\sum_{n\geq 0}a(w)\circ_n b(w)\ (z-w)^{-n-1}.$$ Here $\sim$ means equal modulo terms which are regular at $z=w$, and $\circ_n$ denotes the $n^{\text{th}}$ circle product. A subset $S=\{a_i|\ i\in I\}$ of $\cA$ is said to {\it generate} $\cA$ if $\cA$ is spanned by words in the letters $a_i$, $\circ_n$, for $i\in I$ and $n\in\mathbb{Z}$. We say that $S$ {\it strongly generates} $\cA$ if every $\cA$ is spanned by words in the letters $a_i$, $\circ_n$ for $n<0$. Equivalently, $\cA$ is spanned by $$\{ :\partial^{k_1} a_{i_1}\cdots \partial^{k_m} a_{i_m}:| \ i_1,\dots,i_m \in I,\ k_1,\dots,k_m \geq 0\}.$$ We say that $S$ {\it freely generates} $\cA$ if there are no nontrivial normally ordered polynomial relations among the generators and their derivatives.

Our main example in this paper is the {\it symplectic fermion algebra} $\cA(n)$ of rank $n$, which is freely generated by odd elements $\{e^{i}, f^{i}|\ i=1,\dots, n\}$ satisfying
 \begin{equation} \begin{split}
e^{i} (z) f^{j}(w)&\sim \delta_{i,j} (z-w)^{-2},\qquad f^{j}(z) e^{i}(w)\sim - \delta_{i,j} (z-w)^{-2},\\
e^{i} (z) e^{j} (w)&\sim 0,\qquad\qquad\qquad\ \ \ f^{i} (z) f^{j} (w)\sim 0.
\end{split} \end{equation} We give $\cA(n)$ the conformal structure \begin{equation} \label{virasorosf} L^{\cA} =  - \sum_{i=1}^n  :e^i f^i: \end{equation} of central charge $-2n$, under which $e^i, f^i$ are primary of weight one. The full automorphism group of $\cA(n)$ is the symplectic group $Sp(2n)$. It acts linearly on the generators, which span a copy of the standard $Sp(2n)$-module $\mathbb{C}^{2n}$.

\subsection{Category $\mathcal{R}$}
Let $\cR$ be the category of vertex algebras $\cA$ equipped with a $\mathbb{Z}_{\geq 0}$-filtration
\begin{equation} \cA_{(0)}\subset\cA_{(1)}\subset\cA_{(2)}\subset \cdots,\qquad \cA = \bigcup_{k\geq 0}
\cA_{(k)}\end{equation} such that $\cA_{(0)} = \mathbb{C}$, and for all
$a\in \cA_{(k)}$, $b\in\cA_{(l)}$, we have
\begin{equation} \label{goodi} a\circ_n b\in  \bigg\{\begin{matrix}\cA_{(k+l)} & n<0 \\ \cA_{(k+l-1)} & 
n\geq 0 \end{matrix}\ . \end{equation}
Elements $a(z)\in\cA_{(d)}\setminus \cA_{(d-1)}$ are said to have degree $d$.

Filtrations on vertex algebras satisfying \eqref{goodi} were introduced in \cite{LiII}, and are known as {\it good increasing filtrations}. Setting $\cA_{(-1)} = \{0\}$, the associated graded object $\text{gr}(\cA) = \bigoplus_{k\geq 0}\cA_{(k)}/\cA_{(k-1)}$ is a $\mathbb{Z}_{\geq 0}$-graded associative, (super)commutative algebra, equipped with a derivation $\partial$ of degree zero. For each $r\geq 1$ we have the projection \begin{equation} \phi_r: \cA_{(r)} \ra \cA_{(r)}/\cA_{(r-1)}\subset \text{gr}(\cA).\end{equation} The assignment $\cA\mapsto \text{gr}(\cA)$ is a functor from $\cR$ to the category of $\partial$-rings, i.e., $\mathbb{Z}_{\geq 0}$-graded (super)commutative rings with a differential $\partial$ of degree zero. A $\partial$-ring $A$ is said to be generated by a subset $\{a_i|~i\in I\}$ if $\{\partial^k a_i|~i\in I, k\geq 0\}$ generates $A$ as a ring. The key feature of $\cR$ is the following reconstruction property \cite{LL}.

\begin{lemma}\label{reconlem}Let $\cA$ be a vertex algebra in $\cR$ and let $\{a_i|~i\in I\}$ be a set of generators for $\text{gr}(\cA)$ as a $\partial$-ring, where $a_i$ is homogeneous of degree $d_i$. If $a_i(z)\in\cA_{(d_i)}$ are elements such that $\phi_{d_i}(a_i(z)) = a_i$, then $\cA$ is strongly generated as a vertex algebra by $\{a_i(z)|~i\in I\}$.\end{lemma}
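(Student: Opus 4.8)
The plan is to prove the lemma by induction on the filtration degree, exploiting that the functor $\text{gr}$ turns the $(-1)$-st (normally ordered) product into the (super)commutative product on $\text{gr}(\cA)$ and is compatible with $\partial$ on both sides. Let $\cA'\subseteq\cA$ be the linear span of all iterated normally ordered monomials $:\!\partial^{k_1}a_{i_1}(z)\cdots\partial^{k_m}a_{i_m}(z)\!:$ in the elements $a_i(z)$ and their derivatives; by definition this is exactly the vertex subalgebra strongly generated by $\{a_i(z)\mid i\in I\}$, so the goal is to show $\cA'=\cA$. Since $\cA=\bigcup_{r\ge 0}\cA_{(r)}$, it suffices to show $\cA_{(r)}\subseteq\cA'$ for all $r$, and I would do this by induction on $r$. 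The base case $r=0$ is immediate, since $\cA_{(0)}=\mathbb{C}\subseteq\cA'$ (scalars are normally ordered monomials — the empty product).

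For the inductive step, assume $\cA_{(r-1)}\subseteq\cA'$ and take $b\in\cA_{(r)}$. First I would record two compatibility facts coming straight from \eqref{goodi}: that $\partial$ preserves the filtration, i.e. $\partial\cA_{(d)}\subseteq\cA_{(d)}$ (because $\partial a = a\circ_{-2}\mathbf{1}$ with $\mathbf{1}\in\cA_{(0)}$, so \eqref{goodi} applies with $n=-2<0$), and $\phi_d(\partial a)=\partial\phi_d(a)$; and that $a\circ_{-1}b\in\cA_{(k+l)}$ for $a\in\cA_{(k)}$, $b\in\cA_{(l)}$, with the ring structure on $\text{gr}(\cA)$ given by $\phi_k(a)\,\phi_l(b)=\phi_{k+l}(a\circ_{-1}b)$. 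In particular every $\partial^k a_i$ is homogeneous of degree $d_i$ in $\text{gr}(\cA)$. Now $\phi_r(b)\in\cA_{(r)}/\cA_{(r-1)}$ is homogeneous of degree $r$, and since $\{a_i\}$ generates $\text{gr}(\cA)$ as a $\partial$-ring I may write
\[
\phi_r(b)=\sum_\alpha c_\alpha\,\prod_{j=1}^{m_\alpha}\partial^{k_{\alpha,j}}a_{i_{\alpha,j}},\qquad c_\alpha\in\mathbb{C},
\]
where every monomial may be taken to satisfy $\sum_j d_{i_{\alpha,j}}=r$. I then lift this to
\[
\nu=\sum_\alpha c_\alpha\,:\!\partial^{k_{\alpha,1}}a_{i_{\alpha,1}}(z)\cdots\partial^{k_{\alpha,m_\alpha}}a_{i_{\alpha,m_\alpha}}(z)\!:\ \in\ \cA'.
\]
By the two facts above, $\nu\in\cA_{(r)}$ and $\phi_r(\nu)=\phi_r(b)$ (commutativity of $\text{gr}(\cA)$ makes the chosen order and nesting of the normally ordered products irrelevant for the latter). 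Hence $b-\nu\in\cA_{(r)}$ with $\phi_r(b-\nu)=0$, so $b-\nu\in\cA_{(r-1)}\subseteq\cA'$ by the inductive hypothesis, and therefore $b\in\cA'$. This closes the induction and yields $\cA'=\cA$.

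I do not expect a genuine obstacle here: the whole argument rests on the good-filtration axiom \eqref{goodi} and the definition of the ring and $\partial$-structure on $\text{gr}(\cA)$. The only points that need a little care are bookkeeping ones — that $\partial$ and $\circ_{-1}$ respect the filtration exactly as \eqref{goodi} prescribes, so that the lift $\nu$ genuinely lands in $\cA_{(r)}$; that $\phi$ is multiplicative for $\circ_{-1}$, so that $\phi_r(\nu)=\phi_r(b)$; and that one may assume all monomials in the chosen expansion of $\phi_r(b)$ are homogeneous of degree exactly $r$, which is forced because $\text{gr}(\cA)$ is graded and each $\partial^k a_i$ is homogeneous of degree $d_i$.
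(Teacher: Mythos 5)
Your argument is correct and is exactly the standard filtration-induction proof of this reconstruction lemma: the paper itself states it without proof, citing \cite{LL}, and the argument there proceeds in the same way (lift the degree-$r$ part of $\phi_r(b)$ to a normally ordered polynomial in the $a_i(z)$ using that $\phi$ intertwines $\circ_{-1}$ and $\partial$ with the product and derivation on $\text{gr}(\cA)$, then subtract and induct on filtration degree). The bookkeeping points you flag — $\partial$ and $\circ_{-1}$ preserving the filtration, multiplicativity of $\phi$, and homogeneity of the chosen expansion — are precisely the facts needed, and you verify them correctly from \eqref{goodi}.
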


There is a similar reconstruction property for kernels of surjective morphisms \cite{LI}. Let $f:\cA\rightarrow \cB$ be a morphism in $\cR$ with kernel $\cJ$, such that $f$ maps each $\cA_{(k)}$ onto $\cB_{(k)}$. The kernel $J$ of the induced map $\text{gr}(f): \text{gr}(\cA)\rightarrow \text{gr}(\cB)$ is a homogeneous $\partial$-ideal (i.e., $\partial J \subset J$). A set $\{a_i|~i\in I\}$ such that $a_i$ is homogeneous of degree $d_i$ is said to generate $J$ as a $\partial$-ideal if $\{\partial^k a_i|~i\in I,~k\geq 0\}$ generates $J$ as an ideal.

\begin{lemma} \label{idealrecon} Let $\{a_i| i\in I\}$ be a generating set for $J$ as a $\partial$-ideal, where $a_i$ is homogeneous of degree $d_i$. Then there exist elements $a_i(z)\in \cA_{(d_i)}$ with $\phi_{d_i}(a_i(z)) = a_i$, such that $\{a_i(z)|~i\in I\}$ generates $\cJ$ as a vertex algebra ideal.\end{lemma}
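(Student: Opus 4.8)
The plan is to run an induction along the good filtration, in close parallel with the proof of Lemma~\ref{reconlem}. First I would arrange that the lifts of the $a_i$ already lie in $\cJ$: given $a_i\in J$ homogeneous of degree $d_i$, choose any preimage $\widetilde a_i(z)\in\cA_{(d_i)}$ under $\phi_{d_i}$. Since $a_i\in J=\ker(\text{gr}(f))$, the element $f(\widetilde a_i(z))\in\cB_{(d_i)}$ in fact lies in $\cB_{(d_i-1)}$; using the standing hypothesis that $f$ maps $\cA_{(d_i-1)}$ onto $\cB_{(d_i-1)}$, choose $c_i\in\cA_{(d_i-1)}$ with $f(c_i)=f(\widetilde a_i(z))$ and set $a_i(z):=\widetilde a_i(z)-c_i$. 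Then $a_i(z)\in\cJ\cap\cA_{(d_i)}$ and $\phi_{d_i}(a_i(z))=a_i$, because subtracting an element of $\cA_{(d_i-1)}$ does not change the symbol in degree $d_i$. Let $\cI\subseteq\cJ$ be the vertex algebra ideal generated by $\{a_i(z)|\ i\in I\}$; it then remains to prove $\cI=\cJ$.

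Next I would show $\cJ\cap\cA_{(k)}\subseteq\cI$ by induction on $k$. For $k=0$ this holds because $f$ restricts to an isomorphism $\cA_{(0)}=\mathbb{C}\to\cB_{(0)}$, so $\cJ\cap\cA_{(0)}=0$. For the inductive step, take $a\in\cJ\cap\cA_{(k)}$; we may assume $\phi_k(a)\neq 0$, since otherwise $a\in\cA_{(k-1)}$ and we are done by induction. As $a\in\cJ$, applying $\text{gr}(f)$ shows that $\phi_k(a)$ is a homogeneous degree-$k$ element of $J$. Because $\{a_i\}$ generates $J$ as a $\partial$-ideal and $J$ is homogeneous, we may write $\phi_k(a)=\sum_{i,l} b_{i,l}\,\partial^l a_i$ with only finitely many nonzero terms and each $b_{i,l}\in\text{gr}(\cA)$ homogeneous of degree $k-d_i$. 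Choosing lifts $b_{i,l}(z)\in\cA_{(k-d_i)}$ with $\phi_{k-d_i}(b_{i,l}(z))=b_{i,l}$, put $a'(z):=\sum_{i,l} :b_{i,l}(z)\,\partial^l a_i(z): \in\cA_{(k)}$. Since in a good filtration $\partial$ is homogeneous of degree zero, the Wick product adds degrees, and the symbol maps are multiplicative, we get $\phi_k(a'(z))=\sum_{i,l} b_{i,l}\,\partial^l a_i=\phi_k(a)$, hence $a-a'(z)\in\cA_{(k-1)}$. On the other hand $a'(z)\in\cI$, because a vertex algebra ideal is closed under $\partial$ and absorbs Wick products; thus $a-a'(z)\in\cJ\cap\cA_{(k-1)}\subseteq\cI$ by the inductive hypothesis, and therefore $a\in\cI$. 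Taking the union over all $k$ gives $\cJ\subseteq\cI$, and the reverse inclusion is immediate from the choice of generators.

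The only point requiring real care is the degree bookkeeping in the inductive step: one must use that the grading on $\text{gr}(\cA)$, the derivation $\partial$, and the symbol maps $\phi_r$ interact exactly as in the definition of the category $\cR$ and the functor $\text{gr}$ — namely that $\partial$ preserves degree, that $\phi_{k+l}(:xy:)=\phi_k(x)\,\phi_l(y)$, and that a homogeneous element of the homogeneous $\partial$-ideal $J$ expands as a sum of homogeneous multiples of the $\partial^l a_i$ with degrees matching on the nose. All of this is built into the hypotheses, so I do not expect a genuine obstruction. As in Lemma~\ref{reconlem}, the argument is moreover effective: explicit $a_i$ together with explicit lifts of the coefficients $b_{i,l}$ produce explicit generators of $\cJ$ as a vertex algebra ideal.
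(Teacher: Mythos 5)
Your proof is correct and follows essentially the same route as the paper, which delegates this lemma to \cite{LI}: first adjust arbitrary lifts by elements of lower filtration degree (using the surjectivity of $f$ on each $\cA_{(k)}$) so that they land in $\cJ$, then induct up the filtration, using multiplicativity of the symbol maps to reduce modulo $\cA_{(k-1)}$. The degree bookkeeping you flag is exactly the content of the good-filtration axioms, so there is no gap.
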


We now define a good increasing filtration on the symplectic fermion algebra $\cA(n)$. First, $\cA(n)$ has a basis consisting of the normally ordered monomials
\begin{equation}\label{basisofs} :\partial^{I_1} e^{1}\cdots \partial^{I_n} e^{n}\partial^{J_1} f ^{1}\cdots\partial^{J_n} f^{n}: .\end{equation} In this notation, $I_k = (i^k_1,\dots, i^k_{r_k})$ and $J_k =(j^k_1,\dots, j^k_{s_k})$ are lists of integers satisfying $0\leq i^k_1< \cdots < i^k_{r_k}$ and  $0\leq j^k_1< \cdots <  j^k_{s_k}$, and 
$$\partial^{I_k} e^{k} = \ :\partial^{i^k_1} e^{k} \cdots \partial^{i^k_{r_k}} e^{k}:,\qquad \partial^{J_k} f^{k} = \ :\partial^{j^k_1} f^{k} \cdots \partial^{j^k_{s_k}} f^{k}:.$$ We have a $\mathbb{Z}_{\geq 0}$-grading \begin{equation}\label{grading} \cA(n) = \bigoplus_{d\geq 0} \cA(n)^{(d)},\end{equation} where $\cA(n)^{(d)}$ is spanned by monomials of the form \eqref{basisofs} of total degree $ d = \sum_{k=1}^n r_k + s_k$. Finally, we define the filtration $\cA(n)_{(d)} = \bigoplus_{i=0}^d \cA(n)^{(i)}$. This filtration satisfies \eqref{goodi}, and we have an isomorphism of $Sp(2n)$-modules
\begin{equation} \label{linassgrad} \cA(n) \cong  \text{gr}(\cA(n)),\end{equation}
and an isomorphism of graded supercommutative rings
\begin{equation} \label{genassgrad} \text{gr}(\cA(n))\cong \bigwedge \bigoplus_{k\geq 0} U_k.\end{equation} Here $U_k$ is the copy of the standard $Sp(2n)$-module $\mathbb{C}^{2n}$ with basis $\{e^{i}_k, f^{i}_k\}$. In this notation, $e^{i}_k$ and $f^{i}_k$ are the images of $\partial^k e^{i}(z)$ and $\partial^k f^{i}(z)$ in $\text{gr}(\cA(n))$. The $\partial$-ring structure on $\bigwedge \bigoplus_{k\geq 0} U_k$ is defined by $\partial e^i_k = e^i_{k+1}$ and $\partial f^i_k = f^i_{k+1}$. For any reductive group $G\subset Sp(2n)$, this filtration is $G$-invariant and is inherited by $\cA(n)^G$. We obtain a linear isomorphism $\cA(n)^G \cong \text{gr}(\cA(n)^G)$ and isomorphisms of $\partial$-rings \begin{equation} \label{currentringiso}  \text{gr}(\cA(n)^G)\cong \text{gr}(\cA(n))^G \cong \big(\bigwedge\bigoplus_{k\geq 0} U_k \big)^G. \end{equation} The weight grading on $\cA(n)$ is inherited by $\text{gr}(\cA(n))$ and \eqref{currentringiso} preserves weight as well as degree, where $\text{
wt}(e^i_k) =  \text{wt}(f^i_k) = k+1$.

\section{The structure of $\cA(n)^{Sp(2n)}$} 
Recall Weyl's first and second fundamental theorems of invariant theory for the standard representation of $Sp(2n)$ (Theorems 6.1.A and 6.1.B of \cite{We}).

\begin{thm} \label{weylfft} For $k\geq 0$, let $U_k$ be the copy of the standard $Sp(2n)$-module $\mathbb{C}^{2n}$ with symplectic basis $\{x_{i,k}, y_{i,k}| \ i=1,\dots,n\}$. Then $(\text{Sym} \bigoplus_{k\geq 0} U_k )^{Sp(2n)}$ is generated by the quadratics \begin{equation}\label{weylgenerators} q_{a,b} = \frac{1}{2}\sum_{i=1}^n \big( x_{i,a} y_{i,b} - x_{i,b} y_{i,a}\big),\qquad  0\leq a<b. \end{equation} For $a>b$, define $q_{a,b} = -q_{b,a}$, and let $\{Q_{a,b}|\ a,b\geq 0\}$ be commuting indeterminates satisfying $Q_{a,b} = -Q_{b,a}$ and no other algebraic relations. The kernel $I_n$ of the homomorphism \begin{equation}\label{weylquot} \mathbb{C}[Q_{a,b}]\ra (\text{Sym} \bigoplus_{k\geq 0} U_k)^{Sp(2n)},\qquad Q_{a,b}\mapsto q_{a,b},\end{equation} is generated by the degree $n+1$ Pfaffians $p_I$, which are indexed by lists $I = (i_0,\dots,i_{2n+1})$ of integers satisfying \begin{equation}\label{ijineq} 0\leq i_0<\cdots < i_{2n+1}.\end{equation} For $n=1$ and $I = (i_0, i_1, i_2, 
i_3)$, we have $$p_I = q_{i_0, i_1} q_{i_2, i_3} - q_{i_0, i_2} q_{i_1, i_3}+q_{i_0, i_3} q_{i_1, i_2},$$ and for $n>1$ they are defined inductively by \begin{equation} \label{pfaffinduction} p_I =  \sum_{r=1}^{2n+1} (-1)^{r+1} q_{i_0,i_r} p_{I_r},\end{equation} where $I_r = (i_1,\dots, \widehat{i_r},\dots, i_{2n+1})$ is obtained from $I$ by omitting $i_0$ and $i_r$. \end{thm}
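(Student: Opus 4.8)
This theorem records Weyl's classical first and second fundamental theorems for the standard representation of $Sp(2n)$, so the most efficient option is simply to cite Theorems~6.1.A and~6.1.B of~\cite{We}; but let me sketch the strategy, since the structure theorem for the vertex algebra $\cA(n)^{Sp(2n)}$ will be a quantum deformation of it. Because taking $Sp(2n)$-invariants commutes with the direct limit $\text{Sym}\bigoplus_{k\ge 0}U_k=\bigcup_m\text{Sym}(U_0\oplus\dots\oplus U_{m-1})$, it is enough to prove the assertion for $m$ copies $U_0,\dots,U_{m-1}$ of $\mathbb{C}^{2n}$ for each fixed $m$ and then let $m\to\infty$.

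\emph{First fundamental theorem.} Introduce the Gram map $\mu\colon(\mathbb{C}^{2n})^m\to\Lambda_m$, $(v_0,\dots,v_{m-1})\mapsto\bigl(\langle v_a,v_b\rangle\bigr)_{0\le a,b<m}$, where $\langle\,,\,\rangle$ is the symplectic form and $\Lambda_m$ is the space of skew-symmetric $m\times m$ matrices; up to the harmless scalar $\tfrac12$ its comorphism $\mu^*$ sends $Q_{a,b}$ to $q_{a,b}$, and $\im\mu^*$ clearly lies in the invariant ring. The assertion of the FFT is exactly that $\mu^*$ is \emph{onto} $\bigl(\text{Sym}(U_0\oplus\dots\oplus U_{m-1})\bigr)^{Sp(2n)}$. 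I would obtain this surjectivity either by Weyl's original route — polarization and restitution reduce matters to multilinear invariants, which are controlled by the Capelli identity together with the FFT for $GL(2n)$, using that $Sp(2n)$ is the $GL(2n)$-stabilizer of the nondegenerate form (this is what converts covectors back into vectors) — or, in more modern language, via Howe duality, which identifies the $Sp(2n)$-invariants with the subalgebra generated by the quadratics $q_{a,b}$.

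\emph{Second fundamental theorem.} Granting the FFT, $\bigl(\text{Sym}(U_0\oplus\dots\oplus U_{m-1})\bigr)^{Sp(2n)}\cong\mathbb{C}[Q_{a,b}]/\ker\mu^*$, and since the left side is an integral domain $\ker\mu^*$ is prime. As each $v_a$ lies in a fixed $2n$-dimensional space carrying a form of rank $2n$, every matrix $\bigl(\langle v_a,v_b\rangle\bigr)$ has rank $\le 2n$, so all $(2n+2)\times(2n+2)$ sub-Pfaffians vanish on $\im\mu$; hence the Pfaffian ideal $\mathcal{P}=(p_I)$ satisfies $\mathcal{P}\subseteq\ker\mu^*$. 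For the reverse inclusion I would invoke the classical fact (De Concini--Procesi, and in characteristic zero much older) that $\mathcal{P}$ is prime with $\mathbb{C}[Q_{a,b}]/\mathcal{P}$ the coordinate ring of the variety of skew forms of rank $\le 2n$ on $\mathbb{C}^m$, of Krull dimension $\binom{m}{2}-\binom{m-2n}{2}=2nm-n(2n+1)$ for $m\ge 2n$. On the other hand, for $m\ge 2n$ a generic $m$-tuple of vectors in $\mathbb{C}^{2n}$ spans the whole space, so the generic $Sp(2n)$-stabilizer is trivial and $\dim\bigl(\text{Sym}(U_0\oplus\dots\oplus U_{m-1})\bigr)^{Sp(2n)}=2nm-\dim Sp(2n)=2nm-n(2n+1)$. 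Thus $\mathcal{P}\subseteq\ker\mu^*$ is an inclusion of primes of equal Krull dimension, hence an equality. (For $m\le 2n+1$ there are no lists $I$ with $2n+2$ distinct entries, $\mathcal{P}=0$, and one checks that $\mu$ is dominant onto $\Lambda_m$, so $\ker\mu^*=0$ and the $q_{a,b}$ are algebraically independent.) Letting $m\to\infty$ yields the theorem.

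\emph{The main obstacle.} Essentially all the difficulty lies in the two classical black boxes: the surjectivity of $\mu^*$, i.e.\ the FFT itself, where all the representation theory is concentrated, and the primality-plus-dimension statement for the Pfaffian ideal. Once these are granted, the SFT is just the short dimension comparison above. For the present paper, quoting~\cite{We} is by far the cleanest route; the point of rehearsing the argument is that exactly this template — generators the quadratic pairings, relations the $(2n+2)$-Pfaffians, governed by a determinantal variety of the expected dimension — reappears, with quantum corrections, in $\cA(n)^{Sp(2n)}$.
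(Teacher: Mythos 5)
Your proposal matches the paper exactly on the essential point: the paper offers no proof of this statement and simply cites Theorems 6.1.A and 6.1.B of \cite{We} (together with Sergeev's work for the odd analogue used later), which is precisely the route you recommend. Your supplementary sketch of the FFT via polarization/Capelli and of the SFT via primality and dimension count of the Pfaffian ideal is a reasonable and standard outline of the classical argument, but it is extra material the paper does not attempt.
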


There is an analogue of this theorem when the symmetric algebra $\text{Sym} \bigoplus_{k\geq 0} U_k$ is replaced by the exterior algebra $\bigwedge \bigoplus_{k\geq 0} U_k$. It is a special case of Sergeev's first and second fundamental theorems of invariant theory for $Osp(m,2n)$ (Theorem 1.3 of \cite{SI} and Theorem 4.5 of \cite{SII}). The generators of $(\bigwedge \bigoplus_{k\geq 0} U_k)^{Sp(2n)}$ are \begin{equation} \label{weylgeneratorsodd} q_{a,b} = \frac{1}{2}\sum_{i=1}^n \big( x_{i,a} y_{i,b} + x_{i,b} y_{i,a}\big),\qquad  0\leq a\leq b. \end{equation} For $a>b$, define $q_{a,b} = q_{b,a}$, and let $\{Q_{a,b}|\ a,b\geq 0\}$ be commuting indeterminates satisfying $Q_{a,b} = Q_{b,a}$ and no other algebraic relations. The kernel $I_n$ of the homomorphism \begin{equation}\label{weylquotodd} \mathbb{C}[Q_{a,b}]\ra (\bigwedge \bigoplus_{k\geq 0} U_k)^{Sp(2n)},\qquad Q_{a,b}\mapsto q_{a,b},\end{equation} is generated by elements $p_I$ of degree $n+1$ which are indexed by lists $I = (i_0,\dots,i_{2n+1})$ 
satisfying \begin{equation}\label{ijineqodd} 0\leq i_0\leq \cdots \leq i_{2n+1}.\end{equation} For $n=1$ and $I = (i_0, i_1, i_2, i_3)$, we have \begin{equation} \label{pfaffodd} p_I = q_{i_0, i_1} q_{i_2, i_3} + q_{i_0, i_2} q_{i_1, i_3}+q_{i_0, i_3} q_{i_1, i_2},\end{equation} and for $n>1$ they are defined inductively by \begin{equation} \label{pfaffinductionodd} p_I =  \sum_{r=1}^{2n+1}  q_{i_0,i_r} p_{I_r},\end{equation} where $I_r = (i_1,\dots, \widehat{i_r},\dots, i_{2n+1})$ is obtained from $I$ by omitting $i_0$ and $i_r$.

The generators $q_{a,b}$ of $R$ correspond to vertex operators \begin{equation}\label{newgenomega} \omega_{a,b} = \frac{1}{2}\sum_{i=1}^n \big(:\partial^a e^i \partial ^b f^i: + :\partial^b e^i \partial^a f^i:\big), \qquad 0\leq a\leq b,\end{equation} of $\cA(n)^{Sp(2n)}$, satisfying $\phi_2(\omega_{a,b}) = q_{a,b}$. By Lemma \ref{reconlem}, $\{\omega_{a,b}|~0\leq a \leq b\}$ strongly generates $\cA(n)^{Sp(2n)}$. In fact, there is a more economical strong generating set. For each $m\geq 0$, let $A_m$ denote the vector space spanned by $\{\omega_{a,b}|~ a+b = m\}$, which has weight $m+2$. We have $\text{dim}(A_{2m}) = m+1 = \text{dim}(A_{2m+1})$ for $m\geq 0$, so \begin{equation}\label{deca} \text{dim} \big(A_{2m} / \partial(A_{2m-1})\big) = 1,\qquad \text{dim} \big(A_{2m+1} / \partial(A_{2m})\big) = 0.\end{equation} For $m\geq 0$, define \begin{equation}\label{defofj} j^{2m} = \omega_{0,2m},\end{equation} which is clearly not a total derivative. We have \begin{equation}\label{decompofa} A_{2m} = \partial (A_{
2m-1})\oplus \bra j^{2m}\ket =  \partial^2 (A_{2m-2})\oplus \bra j^{2m}\ket ,\end{equation} where $\bra j^{2m}\ket$ is the linear span of $j^{2m}$. Similarly, \begin{equation}\label{decompofai} A_{2m+1} = \partial^2(A_{2m-1})\oplus \bra \partial j^{2m}\ket =  \partial^3 (A_{2m-2})\oplus \bra \partial j^{2m}\ket.\end{equation} Moreover, $\{\partial^{2i} j^{2m-2i}|~ 0\leq i\leq m\}$ and $\{\partial^{2i+1} j^{2m-2i}|\ 0\leq i\leq m\}$ are bases of $A_{2m}$ and $A_{2m+1}$, respectively, so each $\omega_{a,b}\in A_{2m}$ and $\omega_{c,d}\in A_{2m+1}$ can be expressed uniquely as \begin{equation}\label{lincomb} \omega_{a,b} =\sum_{i=0}^m \lambda_i \partial^{2i}j^{2m-2i},\qquad \omega_{c,d} =\sum_{i=0}^m \mu_i \partial^{2i+1}j^{2m-2i}\end{equation} for constants $\lambda_i,\mu_i$. Hence $\{j^{2m}|\ m\geq 0\}$ is also a strong generating set for $\cA(n)^{Sp(2n)}$.

\begin{thm} $\cA(n)^{Sp(2n)}$ is generated by $j^0$ and $j^2$ as a vertex algebra.
\end{thm}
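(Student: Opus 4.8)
The plan is to exploit the fact, established above, that $\{j^{2m}\mid m\ge 0\}$ is a strong generating set for $\cA(n)^{Sp(2n)}$. Let $\cC\subseteq\cA(n)^{Sp(2n)}$ be the vertex subalgebra generated by $j^0$ and $j^2$. Since $\cC$ is closed under all modes $a\circ_k(-)$ of its elements, it suffices to show $j^{2m}\in\cC$ for every $m\ge 0$, and I would prove this by induction on $m$, the cases $m=0,1$ being immediate. For the inductive step, assume $j^0,j^2,\dots,j^{2m}\in\cC$ and consider the circle product $j^2\circ_1 j^{2m}\in\cC$; its weight is $4+(2m+2)-2=2m+4$, the weight of $j^{2m+2}$. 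Since $j^2$ and $j^{2m}$ each have degree $2$ in the good increasing filtration, \eqref{goodi} puts $j^2\circ_1 j^{2m}$ in degree $\le 3$; but by Weyl's and Sergeev's theorems $\text{gr}(\cA(n)^{Sp(2n)})\cong R$ is generated by the degree-two invariants $q_{a,b}$, hence concentrated in even degrees, so $j^2\circ_1 j^{2m}$ in fact has degree $\le 2$. As there are no nonzero invariants of degree $\le 1$ and positive weight, $j^2\circ_1 j^{2m}$ must lie in $A_{2m+2}$.

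Using the basis $\{\partial^{2i}j^{2m+2-2i}\mid 0\le i\le m+1\}$ of $A_{2m+2}$ from \eqref{lincomb}, write
\[
j^2\circ_1 j^{2m}=\alpha_m\, j^{2m+2}+\sum_{i=1}^{m+1}\beta_i\,\partial^{2i}j^{2m+2-2i}
\]
for constants $\alpha_m,\beta_i$. For $1\le i\le m+1$ we have $2m+2-2i\le 2m$, so the inductive hypothesis gives $j^{2m+2-2i}\in\cC$ and therefore $\partial^{2i}j^{2m+2-2i}\in\cC$; since also $j^2\circ_1 j^{2m}\in\cC$, it follows that $j^{2m+2}\in\cC$ provided $\alpha_m\ne 0$. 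This closes the induction, and the theorem is reduced to showing that $\alpha_m\ne 0$ for all $m\ge 1$.

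Proving $\alpha_m\ne 0$ is the point I expect to require real work. Note that $\alpha_m$ equals the image of $j^2\circ_1 j^{2m}$ under the projection $A_{2m+2}\to A_{2m+2}/\partial(A_{2m+1})\cong\C\,\overline{j^{2m+2}}$, since each $\partial^{2i}j^{2m+2-2i}$ with $i\ge 1$ is a total derivative. Modulo total derivatives one has $:\partial^p e^i\,\partial^q f^i:\ \equiv\ (-1)^p\,:e^i\,\partial^{p+q}f^i:$, by iterating $\partial\big(:\partial^{p-1}e^i\,\partial^q f^i:\big)=\,:\partial^p e^i\,\partial^q f^i:\,+\,:\partial^{p-1}e^i\,\partial^{q+1}f^i:$ (and, when an $e$ must be transposed past an $f$, the correction is a derivative of a constant and so vanishes). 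I would then compute the operator product $\omega_{0,2}(z)\,\omega_{0,2m}(w)$ by Wick's theorem, noting that only single contractions contribute to the $\circ_1$ term because the single full double contraction produces a multiple of $(z-w)^{-(2m+6)}$. Feeding in the contraction values $\langle\partial^a e^i(z)\,\partial^d f^i(w)\rangle=(-1)^a(a+d+1)!\,(z-w)^{-(a+d+2)}$ together with the Taylor coefficients of the leftover quadratics, the coefficient of $(z-w)^{-2}$ reduces, after applying the projection above, to $\alpha_m\,\overline{j^{2m+2}}$ with $\alpha_m$ an explicit finite alternating sum of products of factorials — visibly independent of $n$, since single contractions do not see the rank. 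The hard part is to show this sum never vanishes; I would attack it either by a short telescoping or generating-function manipulation that puts $\alpha_m$ in closed form, or by isolating a single dominant contraction (for example the pairing of the two highest-order derivatives) whose contribution the remaining, lower-order terms cannot cancel. Everything else is routine bookkeeping.
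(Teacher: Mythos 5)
Your setup is exactly the paper's: induct on $m$, apply $j^2\circ_1(-)$ to $j^{2m}$, observe via the filtration and the weight grading that the result lands in $A_{2m+2}$, expand in the basis $\{\partial^{2i}j^{2m+2-2i}\}$, and conclude provided the coefficient of $j^{2m+2}$ is nonzero. All of that reduction is correct, and your observation that the double contraction cannot contribute to the $\circ_1$ term is also right.

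The problem is that you stop at precisely the point where the proof actually lives. The entire content of the theorem is the non-vanishing of your $\alpha_m$, and you do not establish it: you write down two possible strategies (``a short telescoping or generating-function manipulation \dots or isolating a single dominant contraction'') without executing either, and you offer no reason why the ``remaining, lower-order terms cannot cancel'' the dominant one. An alternating sum of factorial products can perfectly well vanish, so nothing short of an actual evaluation closes the argument. The paper's proof consists of exactly this evaluation: it computes
$$j^2\circ_1 j^{2k} \;=\; -(2k+4)\,j^{2k+2} \;+\; \partial^2\omega,$$
with $\omega$ a linear combination of the $\partial^{2i}j^{2k-2i}$, so that $\alpha_k=-(2k+4)\neq 0$ and the induction goes through. (Note also that your claimed error term $\sum_{i\ge 1}\beta_i\,\partial^{2i}j^{2m+2-2i}$ is consistent with, but weaker than, the paper's assertion that the correction is a second derivative; this does not affect the logic.) As written, your proposal is a correct reduction of the theorem to a single finite computation, not a proof of it.
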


\begin{proof} It suffices to show that each $j^{2k}$ can be generated by these elements. This follows from the calculation $$j^2 \circ_1 j^{2k} = -(2k+4) j^{2k+2}+ \partial^2 \omega,$$ where $\omega$ is a linear combination of $\partial^{2i}j^{2k-2i}$ for $i=0,\dots, k$.
\end{proof}

Consider the category of vertex algebras with generators $\{J^{2m}|\ m\geq 0\}$, which satisfy the same OPE relations as the generators $\{j^{2m}|~m\geq 0\}$ of $\cA(n)^{Sp(2n)}$. Since the vector space with basis $\{1\}\cup \{\partial^lj^{2m}|\ l,m\geq 0\}$ is closed under $\circ_n$ for all $n\geq 0$, it forms a Lie conformal algebra. By Theorem 7.12 of \cite{BK}, this category contains a universal object $\cM_n$, which is {\it freely} generated by $\{J^{2m}|\ m\geq 0\}$. Then $\cA(n)^{Sp(2n)}$ is a quotient of $\cM_n$ by an ideal $\cI_n$, and since $\cA(n)^{Sp(2n)}$ is a simple vertex algebra, $\cI_n$ is maximal. Let $$\pi_n: \cM_n \rightarrow \cA(n)^{Sp(2n)},\qquad J^{2m}\mapsto j^{2m}$$ denote the quotient map. Using (\ref{lincomb}), which holds in $\cA(n)^{Sp(2n)}$ for all $n$, we can define an alternative strong generating set $\{\Omega_{a,b}| ~0\leq a\leq b\}$ for $\cM_n$ by the same formula: for $a+b = 2m$ and $c+d = 2m+1$, $$\Omega_{a,b} =\sum_{i=0}^m \lambda_i \partial^{2i}J^{2m-2i},\qquad 
 \Omega_{c,d} =\sum_{i=0}^m \mu_i \partial^{2i+1}J^{2m-2i}.$$ Clearly $\pi_n(\Omega_{a,b}) = \omega_{a,b}$. We shall use the same notation $A_m$ to denote the span of $\{\Omega_{a,b}|\ a+b = m\}$, when no confusion can arise. Note that $\cM_{n}$ has a good increasing filtration in which $(\cM_n)_{(2k)}$ is spanned by iterated Wick products of the generators $J^{2m}$ and their derivatives, of length at most $k$, and $(\cM_n)_{(2k+1)} = (\cM_n)_{(2k)}$. Equipped with this filtration, $\cM_n$ lies in the category $\cR$, and $\pi_n$ is a morphism in $\cR$.

\subsection{The structure of the ideal $\cI_{n}$}
Under the identifications $$\text{gr}(\cM_{n})\cong \mathbb{C}[Q_{a,b}],\qquad \text{gr}(\cA(n)^{Sp(2n)})\cong ( \bigwedge \bigoplus_{k\geq 0} U_k)^{Sp(2n)}\cong \mathbb{C}[q_{a,b}]/I_n,$$ $\text{gr}(\pi_{n})$ is just the quotient map \eqref{weylquot}. 

\begin{lemma} \label{ddef} For each $I = (i_0,i_1,\dots, i_{2n+1})$, there exists a unique element \begin{equation} \label{ddefeq} P_{I}\in (\cM_{n})_{(2n+2)}\cap \cI_{n}\end{equation} of weight $2n+2 +\sum_{a=0}^{2n+1} i_a$, satisfying \begin{equation}\label{uniquedij} \phi_{2n+2}(P_{I}) = p_{I}.\end{equation} These elements generate $\cI_{n}$ as a vertex algebra ideal. \end{lemma}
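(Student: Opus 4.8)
The plan is to apply the ideal reconstruction machinery (Lemma \ref{idealrecon}) to the morphism $\pi_n \colon \cM_n \to \cA(n)^{Sp(2n)}$ in the category $\cR$. First I would check the hypotheses of that lemma: $\pi_n$ is a morphism in $\cR$, and since $\pi_n$ maps a strong generating set ($J^{2m} \mapsto j^{2m}$, or equivalently $\Omega_{a,b}\mapsto\omega_{a,b}$) to a strong generating set, and each filtered piece $(\cM_n)_{(2k)}$ is spanned by normally ordered monomials in the generators of length at most $k$, one sees that $\pi_n$ maps $(\cM_n)_{(r)}$ onto $\cA(n)^{Sp(2n)}_{(r)}$. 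Hence $\text{gr}(\pi_n)$ is surjective, and under the identifications $\text{gr}(\cM_n)\cong \mathbb{C}[Q_{a,b}]$ and $\text{gr}(\cA(n)^{Sp(2n)})\cong \mathbb{C}[q_{a,b}]/I_n$ it is exactly the quotient map \eqref{weylquotodd}; therefore the kernel $J$ of $\text{gr}(\pi_n)$ is the homogeneous $\partial$-ideal $I_n$.

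Next I would invoke the odd analogue of Weyl's second fundamental theorem (Sergeev's theorem, as quoted after Theorem \ref{weylfft}): $I_n$ is generated as an ideal by the Pfaffian-type elements $p_I$ indexed by lists $I=(i_0,\dots,i_{2n+1})$ with $0\le i_0\le\cdots\le i_{2n+1}$. The one point that needs a remark is that this set of generators is in fact a generating set for $I_n$ \emph{as a $\partial$-ideal}: since $\partial$ on $\mathbb{C}[q_{a,b}]$ acts by $\partial q_{a,b} = q_{a+1,b}+q_{a,b+1}$ and shifts all indices upward, the $\partial$-span of the $p_I$ is contained in the ordinary ideal span, so nothing is lost; conversely every $p_I$ already appears in the list, so the list generates $I_n$ as a $\partial$-ideal a fortiori. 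Each $p_I$ is homogeneous of degree $n+1$ in the $q_{a,b}$, i.e. of degree $2(n+1)=2n+2$ in the generators of $\bigwedge\bigoplus_k U_k$, and homogeneous of weight $2n+2+\sum_a i_a$.

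Applying Lemma \ref{idealrecon} with this $\partial$-ideal generating set $\{p_I\}$ then yields elements $P_I \in (\cM_n)_{(2n+2)}$ with $\phi_{2n+2}(P_I) = p_I$, such that $\{P_I\}$ generates $\cI_n$ as a vertex algebra ideal. Since each $p_I$ is weight-homogeneous and the filtration and weight grading on $\cM_n$ are compatible, one may choose $P_I$ to be homogeneous of weight $2n+2+\sum_a i_a$ (project the $P_I$ produced by the lemma onto the appropriate weight space; this does not disturb $\phi_{2n+2}(P_I)=p_I$ because $p_I$ lives in that single weight). It remains to address uniqueness: if $P_I$ and $P_I'$ both lie in $(\cM_n)_{(2n+2)}\cap\cI_n$, are weight-homogeneous of the stated weight, and satisfy $\phi_{2n+2}(P_I)=\phi_{2n+2}(P_I')=p_I$, then their difference $D = P_I - P_I'$ lies in $(\cM_n)_{(2n+1)}\cap\cI_n = (\cM_n)_{(2n)}\cap\cI_n$ and has the given weight; I would argue that $(\cM_n)_{(2n)}\cap\cI_n = 0$, because $\text{gr}(\pi_n)$ is an isomorphism in degrees $\le n$ on the nose (the relations in $I_n$ start in degree $n+1$), so $\cI_n$ contains no nonzero element of filtration degree $\le 2n$. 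This forces $D=0$.

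The main obstacle I anticipate is the uniqueness claim, specifically the assertion that $\cI_n$ has no nonzero elements of filtration degree $\le 2n$. This is really the statement that $\pi_n$ restricted to $(\cM_n)_{(2n)}$ is injective, equivalently that there are no normally ordered relations among the $j^{2m}$ of "length $\le n$" — which in turn rests on the fact (from Sergeev's theorem) that the classical relation ideal $I_n$ is generated in degree $n+1$ and contains nothing of lower degree, combined with the fact that $\text{gr}(\pi_n)$ faithfully detects the filtration. One must be slightly careful that a filtration-degree drop can only be certified classically via $\text{gr}$, and that an element of $(\cM_n)_{(2n)}\cap\cI_n$ maps under $\phi_{2n}$ into the degree-$\le n$ part of $I_n$, which vanishes; then one induces downward on filtration degree. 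Everything else is a routine application of the two reconstruction lemmas already available in the excerpt.
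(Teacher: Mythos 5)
Your proposal is correct and follows essentially the same route as the paper: apply Lemma \ref{idealrecon} to $\pi_n$ (after noting it maps each filtered piece onto the corresponding one and that $\mathrm{gr}(\pi_n)$ is the classical quotient map with kernel $I_n$ generated by the $p_I$), and deduce uniqueness from the fact that $(\cM_n)_{(2n)}\cap\cI_n=0$ because $I_n$ contains no elements of degree below $n+1$. Your additional remarks (the $\partial$-ideal versus ordinary ideal point, weight homogeneity, and the downward induction certifying $(\cM_n)_{(2n)}\cap\cI_n=0$) are exactly the details the paper's terse proof leaves implicit.
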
 

\begin{proof}
Clearly $\pi_{n}$ maps each $(\cM_{n})_{(k)}$ onto $(\cA(n)^{Sp(2n)})_{(k)}$, so the hypotheses of Lemma \ref{idealrecon} are satisfied. Since $I_{n} = \text{Ker} (\text{gr}(\pi_{n}))$ is generated by $\{p_{I}\}$, we can apply Lemma \ref{idealrecon} to find $P_{I}\in (\cM_{n})_{(2n+2)}\cap \cI_{n}$ satisfying $\phi_{2n+2}(P_{I}) = p_{I}$, such that $\{P_{I}\}$ generates $\cI_{n}$. If $P'_{I}$ also satisfies \eqref{uniquedij}, we would have $P_{I} - P'_{I}\in (\cM_{n})_{(2n)} \cap \cI_{n}$. Since there are no relations in $\cA(n)^{Sp(2n)}$ of degree less than $2n+2$, we have $P_{I} - P'_{I}=0$. \end{proof}

Let $\bra P_I \ket$ denote the vector space with basis $\{P_I\}$ where $I$ satisfies \eqref{ijineq}. We have $$\bra P_I \ket = (\cM_{n})_{(2n+2)}\cap \cI_{n},$$ and clearly $\bra P_I \ket$ is a module over the Lie algebra $\cP$ generated by $\{J^{2m}(k) |~ m,k \geq 0\}$. It is convenient to work with a different generating set for $\cP$, namely $$\{\Omega_{a,b} (a+b+1-w)|~ 0\leq a\leq b,\  a+b+1-w\geq 0\}.$$ Note that $\Omega_{a,b}(a+b+1-w)$ is homogeneous of weight $w$, and $\cP$ acts on $\text{gr}(\cM_{n})$ by derivations of degree zero. This action is independent of $n$, and is specified by the action on the generators $\Omega_{c,d}$. We compute \begin{equation} \label{actionp} \Omega_{a,b}(a+b+1-w) (\Omega_{c,d}) = \lambda_{a,b,w,c} (\Omega_{c+w,d}) + \lambda_{a,b,w,d} ( \Omega_{c,d+w}),\end{equation} where \begin{equation} \label{actionlambdap}\lambda_{a,b,w,c}  =  
\bigg\{ \begin{matrix}  (-1)^{a+1} \frac{(a+c+1)!}{(c-b+w)!} + (-1)^{b+1} \frac{(b+c+1)!}{(c-a+w)!} & c-b+w \geq 0 \cr & \cr 0 & c-b+w <0 \end{matrix}.\end{equation}
The action of $\cP$ on $\bra P_I \ket$ is by {\it weighted derivation} in the following sense. Given $I = (i_0,\dots,i_{2n+1})$ and $p= \Omega_{a,b}(a+b+1-w)\in \cP$, we have \begin{equation} \label{weightederivation} p(P_{I}) = \sum_{r=0}^{2n+1} \lambda_r P_{I^r},\end{equation} where $I^r = (i_0,\dots, i_{r-1}, i_r + w,i_{r+1},\dots, i_{2n+1})$ and $\lambda_r = \lambda_{a,b,w,i_r}$.

For each $n\geq 1$, there is a distinguished element $P_0\in \mathcal{I}_n$, defined by $$P_0 = P_{I},\qquad I = (0,0,\dots, 0).$$ It is the unique element of $\cI_{n}$ of minimal weight $2n+2$, and is a singular vector in $\cM_{n}$. In fact, $P_0$ generates $\mathcal{I}_{n}$ as a vertex algebra ideal. The proof is similar to the proof of Theorem 7.2 of \cite{LV}, and involves using \eqref{actionp}-\eqref{weightederivation} to show that $\bra P_I \ket$ is generated by $P_0$ as a module over $\mathcal{P}$. This is sufficient because $\bra P_I \ket$ generates $\cI_{n}$ as a vertex algebra ideal.

\subsection{Normal ordering and quantum corrections}

Given a homogeneous $p\in \text{gr}(\cM_{n})\cong \mathbb{C}[Q_{a,b}]$ of degree $k$ in the $Q_{a,b}$, a {\it normal ordering} of $p$ will be a choice of normally ordered polynomial $P\in (\cM_{n})_{(2k)}$, obtained by replacing $Q_{a,b}$ by $\Omega_{a,b}$, and by replacing ordinary products with iterated Wick products. For any choice we have $\phi_{2k}(P) = p$. For the rest of this section, $P^{2k}$, $E^{2k}$, $F^{2k}$, etc., will denote elements of $(\cM_{n})_{(2k)}$ which are homogeneous, normally ordered polynomials of degree $k$ in the $\Omega_{a,b}$.

Let $P_{I}^{2n+2}\in (\cM_{n})_{(2n+2)}$ be some normal ordering of $p_I$, so that $\phi_{2n+2}(P_I^{2n+2}) = p_I$. Then $$\pi_{n}(P_I^{2n+2}) \in (\cA(n)^{Sp(2n)})_{(2n)},$$ and $\phi_{2n}(\pi_{n}(P_I^{2n+2})) \in \text{gr}(\cA(n)^{Sp(2n)})$ can be expressed uniquely as a polynomial of degree $n$ in the variables $q_{a,b}$. Choose some normal ordering of the corresponding polynomial in the variables $\Omega_{a,b}$, and call this element $-P^{2n}_{I}$. Then $P^{2n+2}_{I} + P^{2n}_{I}$ satisfies $$\phi_{2n+2}(P_I^{2n+2} + P^{2n}_{I}) = p_I,\qquad \pi_{n}(P^{2n+2}_{I} + P^{2n}_{I})\in (\cA(n)^{Sp(2n)})_{(2n-2)}.$$ Continuing this process, we obtain an element $\sum_{k=1}^{n+1} P^{2k}_{I}$ in the kernel of $\pi_{n}$, such that $\phi_{2n+2}(\sum_{k=1}^{n+1} P^{2k}_{I}) = p_I$. By Lemma \ref{ddef}, must have 
\begin{equation}\label{decompofd} P_{I} = \sum_{k=1}^{n+1}P^{2k}_{I}.\end{equation} The term $P^2_{I}$ lies in the space $A_m$ spanned by $\{\Omega_{a,b}|~a+b=m\}$, for $m = 2n+ \sum_{a=0}^{2n+1} i_a$. By \eqref{decompofa}, for all even integers $m\geq 1$ we have a projection $$\text{pr}_{m}: A_{m}\ra \bra J^{m}\ket.$$ For all $I = (i_0,i_1,\dots, i_{2n+1})$ such that $m = 2n+ \sum_{a=0}^{2n+1} i_a$ is even, define the {\it remainder} \begin{equation}\label{defofrij} R_{I} = \text{pr}_m(P^2_{I}).\end{equation}

\begin{lemma} \label{uniquenessofr} Fix $P_{I}\in\cI_{n}$ with $I = (i_0,i_1, \dots, i_{2n+1})$ and $m = 2n+ \sum_{a=0}^{2n+1} i_a$ even. Suppose that $P_{I} = \sum_{k=1}^{n+1} P^{2k}_{I}$ and $P_{I} = \sum_{k=1}^{n+1} \tilde{P}^{2k}_{I}$ are two different decompositions of $P_{I}$ of the form (\ref{decompofd}). Then $$P^2_{I} - \tilde{P}^2_{I} \in \partial^2 (A_{m-2}).$$ In particular, $R_{I}$ is independent of the choice of decomposition of $P_{I}$.\end{lemma}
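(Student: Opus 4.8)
The plan is to compare the two decompositions term by term, starting from the top degree and descending. Write $D^{2k} = P^{2k}_I - \tilde P^{2k}_I$ for $k = 1,\dots,n+1$, so that $\sum_{k=1}^{n+1} D^{2k} = 0$. Each $D^{2k}$ is a normally ordered polynomial of degree $k$ in the $\Omega_{a,b}$, hence lies in $(\cM_n)_{(2k)}$, and $D^{2k}\in (\cM_n)_{(2k)}\setminus (\cM_n)_{(2k-2)}$ unless its image under $\phi_{2k}$ vanishes. First I would show that $\phi_{2n+2}(D^{2n+2}) = 0$: indeed $\phi_{2n+2}(P^{2n+2}_I) = p_I = \phi_{2n+2}(\tilde P^{2n+2}_I)$ by construction, so $D^{2n+2}\in (\cM_n)_{(2n)}$. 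Then $\sum_{k=1}^{n} D^{2k} + D^{2n+2} \in (\cM_n)_{(2n)}$ and has image $0$ in $\text{gr}(\cM_n)$ at degree $2n+2$; applying $\phi_{2n}$ to this sum (now an element of $(\cM_n)_{(2n)}$) and using that $\phi_{2n}$ is a ring homomorphism onto the degree-$2n$ part of $\text{gr}(\cM_n)\cong \mathbb{C}[Q_{a,b}]$, I get a polynomial identity forcing $\phi_{2n}(D^{2n} + D^{2n+2}) = 0$, hence $D^{2n}+D^{2n+2}\in (\cM_n)_{(2n-2)}$. Iterating, I descend until $D^2 + D^4 + \cdots + D^{2n+2} \in (\cM_n)_{(0)} = \mathbb{C}$; since everything is homogeneous of weight $m+2 > 0$, this element is $0$.

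The key point to extract from this descent is what happens to $D^2 = P^2_I - \tilde P^2_I$. From $D^2 = -(D^4 + D^6 + \cdots + D^{2n+2})$ and the fact that the right-hand side lies in $(\cM_n)_{(4)}$ but has image $0$ in $\text{gr}(\cM_n)$ at every degree $\geq 4$ (by the same iterated $\phi$-argument applied at each level), I conclude $D^2 \in (\cM_n)_{(2)}$ and $\phi_2(D^2) = 0$ — but $D^2$ is already a degree-one (in the $\Omega_{a,b}$) element, i.e. a linear combination of the $\Omega_{a,b}$ and their derivatives, living in $A_m$. The condition $\phi_2(D^2) = 0$ combined with the fact that $D^2 \in A_m$ and $\phi_2$ restricted to $A_m$ is injective on the subspace spanned by the $\omega_{a,b}$-images would naively force $D^2 = 0$; the subtlety is that normally ordered products of the generators can contribute terms lying in $A_m$ (total derivatives of lower-weight quadratic Wick products produce linear terms under the associated graded only up to lower degree). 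So in fact the correct statement is that the ambiguity in $D^2$ is exactly the image in $A_m$ of normally ordered corrections, and by \eqref{decompofa}–\eqref{decompofai} the only part of $A_m$ that is \emph{not} pinned down is $\partial^2(A_{m-2})$ — precisely the space of linear combinations $\sum_i \lambda_i \partial^{2i} j^{m-2i}$ with $i\geq 1$. This gives $D^2 = P^2_I - \tilde P^2_I \in \partial^2(A_{m-2})$.

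Finally, for the last sentence: since $\text{pr}_m : A_m \to \bra J^m\ket$ is the projection in the decomposition $A_m = \partial^2(A_{m-2}) \oplus \bra J^m\ket$ of \eqref{decompofa}, it annihilates $\partial^2(A_{m-2})$ by definition, so $R_I = \text{pr}_m(P^2_I) = \text{pr}_m(\tilde P^2_I)$, i.e. $R_I$ is independent of the chosen decomposition.

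I expect the main obstacle to be the careful bookkeeping in the descent — specifically, justifying that at each stage applying $\phi_{2j}$ to the truncated sum is legitimate (one must know the partial sum lies in $(\cM_n)_{(2j)}$ before applying $\phi_{2j}$) and that the associated graded computation really does force the next vanishing, rather than just a weaker congruence. This is where the good-filtration axiom \eqref{goodi} and the explicit basis of $\cM_n$ by normally ordered monomials do the real work: $\text{gr}(\cM_n)\cong \mathbb{C}[Q_{a,b}]$ is a polynomial ring with no relations, so a homogeneous polynomial that vanishes really is zero, and there is no hidden cancellation. The identification of the leftover space as exactly $\partial^2(A_{m-2})$ rather than something larger is the other delicate point, and it relies on \eqref{decca}–\eqref{decompofai}, i.e. on the dimension count $\dim(A_{2m}/\partial^2(A_{2m-2})) = 1$.
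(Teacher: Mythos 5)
Your skeleton --- set $D^{2k}=P^{2k}_I-\tilde P^{2k}_I$, use $\sum_k D^{2k}=0$, and descend through the filtration comparing symbols --- is the right starting point, but the argument breaks down exactly at the step that carries the content of the lemma. First, the intermediate claim $\phi_2(D^2)=0$ cannot be right: $D^2$ lies in $A_m$, and $\phi_2$ restricted to $A_m$ is injective (the symbols $Q_{a,b}$ with $a+b=m$ are linearly independent in $\mathbb{C}[Q_{a,b}]$), so $\phi_2(D^2)=0$ would force $D^2=0$, which is strictly stronger than the lemma and false in general (e.g.\ already $:\Omega_{0,1}\Omega_{2,3}:-:\Omega_{2,3}\Omega_{0,1}:$ is a nonzero element of $(\cM_1)_{(2)}$ with a nontrivial linear part). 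The error is the inference ``the right-hand side has vanishing symbol in every degree $\geq 4$, hence $\phi_2$ of it vanishes'': vanishing of the higher symbols only places the element in $(\cM_n)_{(2)}$ and says nothing about its image under $\phi_2$. You then correctly sense that the conclusion is too strong and retreat to ``the ambiguity is exactly $\partial^2(A_{m-2})$,'' but this is asserted rather than proved; the dimension count $\dim\bigl(A_m/\partial^2(A_{m-2})\bigr)=1$ from \eqref{deca} tells you what the complement is, not that the corrections actually avoid the $\langle J^m\rangle$-direction. That avoidance is precisely what the lemma claims.

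The missing idea is the following. A homogeneous normally ordered polynomial of degree $k\geq 2$ in the $\Omega_{a,b}$ whose symbol vanishes in $\mathbb{C}[Q_{a,b}]$ is a linear combination of quasi-commutativity and quasi-associativity corrections, and the degree-one (in the $\Omega_{a,b}$) parts of those corrections are always \emph{total derivatives}: for quasi-commutativity one has $:ab:-:ba:=\sum_{l\geq 0}\tfrac{(-1)^l}{(l+1)!}\partial^{l+1}(a\circ_l b)\in\partial(\cM_n)$, and the linear contributions from quasi-associativity are of the form $\lambda\,\partial^{l+1}a$ with $l\geq 0$. Running your descent with this input, each time a $D^{2k}$ (or its corrected version) with vanishing symbol is re-expanded into lower degrees, the linear term it deposits lies in $A_m\cap\partial(\cM_n)=\partial(A_{m-1})=\partial^2(A_{m-2})$ --- the last equality using \eqref{decompofa}--\eqref{decompofai} and the hypothesis that $m$ is even, which your argument never invokes. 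Accumulating these contributions gives $D^2\in\partial^2(A_{m-2})$, and your final paragraph (that $\mathrm{pr}_m$ kills $\partial^2(A_{m-2})$, so $R_I$ is well defined) then goes through. This is the content of Lemma 4.7 and Corollary 4.8 of \cite{LI}, to which the paper defers.
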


\begin{proof} The argument is the same as the proof of Lemma 4.7 and Corollary 4.8 of \cite{LI}. \end{proof}

\begin{lemma}\label{lemnonzero} Let $R_0$ denote the remainder of the element $P_0$. The condition $R_0 \neq 0$ is equivalent to the existence of a decoupling relation in $\cA(n)^{Sp(2n)}$ of the form \begin{equation}\label{maindecoupling} j^{2n} = Q(j^0,j^2,\dots, j^{2n-2}),\end{equation} where $Q$ is a normally ordered polynomial in $j^0, j^2, \dots, j^{2n-2}$ and their derivatives. \end{lemma}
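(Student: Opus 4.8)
The plan is to unwind the definition of the remainder $R_0$ and relate it directly to the structure of the relation $P_0$ in $\cA(n)^{Sp(2n)}$. Recall that $P_0 = \sum_{k=1}^{n+1} P_0^{2k}$ in $\cM_n$, and that $\pi_n(P_0) = 0$ in $\cA(n)^{Sp(2n)}$, since $P_0 \in \cI_n$. The top term $P_0^{2n+2}$ is a normal ordering of the Pfaffian $p_0 = p_I$ with $I = (0,0,\dots,0)$, and the lower terms $P_0^{2k}$ for $k \leq n$ are the successive quantum corrections. The weight of $P_0$ is $2n+2$, so $m = 2n$ in the notation of \eqref{defofrij}, and $P_0^2 \in A_{2n}$. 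By \eqref{decompofa} we can write $P_0^2 = \partial^2(\omega') + c\, j^{2n}$ for some $\omega' \in A_{2n-2}$ and scalar $c$, where $c\, j^{2n} = \mathrm{pr}_{2n}(P_0^2) = R_0$ by definition. So $R_0 \neq 0$ is exactly the statement that $c \neq 0$.

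First I would apply $\pi_n$ to the identity $P_0 = \sum_{k=1}^{n+1} P_0^{2k}$. Since $\pi_n(P_0) = 0$, this gives $\pi_n(P_0^2) = -\pi_n\big(\sum_{k=2}^{n+1} P_0^{2k}\big)$ in $\cA(n)^{Sp(2n)}$. The right-hand side lies in $(\cA(n)^{Sp(2n)})_{(4)}$ — more precisely, it is a normally ordered polynomial of degree $\geq 2$ in the $\omega_{a,b}$, hence, after rewriting each $\omega_{a,b}$ via \eqref{lincomb} in terms of the $j^{2\ell}$ and their derivatives, it is a normally ordered polynomial in $j^0, j^2, \dots$ with each monomial having at least two factors. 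On the left, $\pi_n(P_0^2) = \pi_n(\partial^2 \omega') + c\, j^{2n} = \partial^2(\pi_n(\omega')) + c\, j^{2n}$. Now $\pi_n(\omega') \in A_{2n-2}$ is a linear combination of $\partial^{2i} j^{2n-2-2i}$, so $\partial^2(\pi_n(\omega'))$ is a linear combination of the $\partial^{2j} j^{2n-2j}$ for $j \geq 1$ — in particular it involves no $\partial$-free $j^{2n}$ and no $j^{2n}$ at all with a coefficient on $j^{2n}$ itself, only $\partial^{\geq 2}$ of lower $j$'s. Moreover, every monomial appearing in $\partial^2(\pi_n(\omega'))$ is linear in the generators, and no such monomial equals $j^{2n}$. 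Hence if $c \neq 0$, solving for $j^{2n}$ expresses it as $c^{-1}$ times (a normally ordered polynomial in $j^0,\dots,j^{2n-2}$ and derivatives), i.e. \eqref{maindecoupling} holds; conversely, if \eqref{maindecoupling} holds, subtracting the corresponding element of $\cM_n$ (which maps to $j^{2n} - Q$) shows that $\cI_n$ contains an element whose remainder is a nonzero multiple of $j^{2n}$, and by Lemma \ref{uniquenessofr} the remainder of $P_0$ — the unique weight-$(2n+2)$ element — must agree with it up to the $\partial^2(A_{2n-2})$-ambiguity, forcing $R_0 \neq 0$.

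For the converse direction I would be slightly more careful: given a decoupling relation \eqref{maindecoupling}, the element $J^{2n} - \widetilde{Q} \in \cM_n$ (where $\widetilde{Q}$ is the polynomial $Q$ with each $j$ replaced by the corresponding $J$) lies in $\cI_n = \ker \pi_n$, is homogeneous of weight $2n+2$, and lies in $(\cM_n)_{(2\cdot?)}$ — its degree-filtration level may exceed $2n+2$, but by the argument in Lemma \ref{ddef} (no relations below degree $2n+2$) and the uniqueness statement of Lemma \ref{uniquenessofr}, its image in $A_{2n}/\partial^2(A_{2n-2})$ is a well-defined nonzero multiple of $J^{2n}$, and this image coincides with $R_0$ because $P_0$ is, up to the same ambiguity plus lower-degree corrections, the unique weight-$(2n+2)$ generator of $\cI_n$. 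The main obstacle is bookkeeping: making precise that subtracting $\partial^2(\pi_n(\omega'))$ from $\pi_n(P_0^2)$, when combined with the quadratic-and-higher terms $\pi_n(\sum_{k\geq 2} P_0^{2k})$, genuinely isolates $j^{2n}$ with a clean coefficient and does not secretly reintroduce $j^{2n}$ through the change of variables \eqref{lincomb} applied to the higher terms. This requires observing that \eqref{lincomb} expresses $\omega_{a,b}$ with $a+b < 2n$ using only $j^{2\ell}$ with $\ell < n$, so the quadratic-and-higher part of $\pi_n(P_0)$, which involves only $\omega_{a,b}$ of total weight-contribution less than the top, never produces a linear $j^{2n}$ term — a degree/weight count that I would state explicitly but not belabor.
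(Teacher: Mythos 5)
Your argument is correct and is essentially the argument the paper intends (it defers to Lemma 4.9 of \cite{LI}): unwind $R_0$ via a decomposition $P_0=\sum_{k}P_0^{2k}$, apply $\pi_n$, note by the weight count that the degree-one part contributes $c\,j^{2n}$ plus $\partial^2(A_{2n-2})$ while the degree $\geq 2$ part involves only $j^0,\dots,j^{2n-2}$, and solve for $j^{2n}$ when $c\neq 0$; conversely, identify $J^{2n}-\widetilde{Q}$ with a nonzero multiple of $P_0$ using that it is the unique weight-$(2n+2)$ element of $\cI_n$ and invoke Lemma \ref{uniquenessofr}. One small simplification: in the converse you need not worry that the filtration degree of $\widetilde{Q}$ exceeds $2n+2$, since every generator has weight at least $2$, so a weight-$(2n+2)$ normally ordered monomial has degree at most $n+1$.
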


\begin{proof} 
This is the same as the proof of Lemma 4.9 of \cite{LI}. \end{proof}

\begin{lemma} \label{lemnonzeroii} Suppose that $R_0 \neq 0$. Then for all $m\geq n$, there exists a decoupling relation \begin{equation}\label{hdrelm} j^{2m} = Q_m(j^0, j^2, \dots, j^{2n-2}).\end{equation} Here $Q_m$ is a normally ordered polynomial in $j^0, j^2, \dots, j^{2n-2}$, and their derivatives. \end{lemma}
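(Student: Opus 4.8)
The plan is to bootstrap from the minimal-weight decoupling relation \eqref{maindecoupling} given by Lemma \ref{lemnonzero} using the commutator action of $j^2$. Recall from the proof of Theorem in Section 3 that $j^2 \circ_1 j^{2k} = -(2k+4)j^{2k+2} + \partial^2 \omega$, where $\omega$ is a linear combination of the $\partial^{2i}j^{2k-2i}$. The key point is that $j^2 \circ_1 (-)$ is an operator on $\cA(n)^{Sp(2n)}$: applying it to any normally ordered word in $j^0, j^2, \dots, j^{2n-2}$ and their derivatives produces, by the (noncommutative) Leibniz rule for circle products, another normally ordered polynomial in the same generators and their derivatives, \emph{together with} terms involving $j^{2m}$ for $m\geq n$ coming from the factors $j^{2n-2}$ via $j^2\circ_1 j^{2n-2} = -(2n+2)j^{2n} + \partial^2\omega$. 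Those higher generators can then be recursively eliminated using the decoupling relations already proved for smaller $m$.

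First I would set up the induction on $m \geq n$, the base case $m=n$ being \eqref{maindecoupling} itself. For the inductive step, assume \eqref{hdrelm} holds for all values up to some $m-1 \geq n$, so in particular $j^{2m-2} = Q_{m-1}(j^0,\dots,j^{2n-2})$. Apply $j^2\circ_1(-)$ to both sides. On the left, $j^2\circ_1 j^{2m-2} = -(2m+2)j^{2m} + \partial^2\omega$ with $\omega$ a linear combination of $\partial^{2i}j^{2m-2-2i}$; each $j^{2m-2-2i}$ with $m-1-i \geq n$ is itself rewritten by the inductive hypothesis, and each with $m-1-i < n$ is already a generator. So the left side becomes $-(2m+2)j^{2m}$ plus a normally ordered polynomial in $j^0,\dots,j^{2n-2}$ and derivatives. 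On the right, $j^2\circ_1 Q_{m-1}$ is, by Leibniz, a sum of terms in which $j^2\circ_1$ hits one factor of the word $Q_{m-1}$; each such hit lands on some $\partial^a j^{2k}$ with $k \leq n-1$, producing $\partial^a(j^2\circ_1 \partial^b j^{2k})$-type expressions — here one uses that $\circ_1$ commutes appropriately with $\partial$, or more precisely the identity $a\circ_n \partial b = \partial(a\circ_n b) + \text{(lower)}$, which keeps everything inside the span of derivatives of the $j^{2k}$, $k\leq n$. The only way $j^{2m}$ or higher generators could appear is when $k = n-1$ and we get a $j^{2n}$, which is then eliminated by \eqref{maindecoupling}; no generator of index exceeding $2n$ ever arises in a single application, and by weight considerations only indices $\leq 2m$ occur. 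Solving for $j^{2m}$ yields \eqref{hdrelm}.

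The main technical obstacle is bookkeeping: one must verify that after applying $j^2\circ_1(-)$ and substituting the inductive relations, \emph{no} generator $j^{2k}$ with $k > n$ survives except the single $j^{2m}$ being solved for, and that this $j^{2m}$ appears with the nonzero coefficient $-(2m+2)$ so that division is legitimate. Weight tracking handles the upper bound: $j^2\circ_1$ raises weight by $2$, so starting from a relation of weight $2m$ we land in weight $2m+2$, and the only generator of weight $2m+2$ is $j^{2m}$. That $j^{2m}$ occurs with coefficient exactly $-(2m+2) \neq 0$ follows from the formula $j^2\circ_1 j^{2m-2} = -(2m+2)j^{2m} + \partial^2\omega$ together with the fact that, on the right-hand side $j^2\circ_1 Q_{m-1}$, every term has degree (in the good filtration of $\cA(n)^{Sp(2n)}$) at least $2$ in the generators and so cannot contribute a bare multiple of $j^{2m}$. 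Hence the coefficient is unaffected by the right-hand contributions, and the recursion closes.

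A cleaner way to organize this, which I would actually adopt to avoid the Leibniz-rule bookkeeping, is to phrase it as a statement about the ideal $\cI_n \subset \cM_n$: Lemma \ref{lemnonzero} says $P_0$ has remainder $R_0 \neq 0$, i.e. $\cI_n$ contains an element whose image under $\mathrm{pr}_{2n}$ (in the notation of \eqref{defofrij}) is a nonzero multiple of $J^{2n}$ plus lower-degree terms. Acting on this element by the Lie algebra $\cP$ — specifically by operators of the form $\Omega_{0,2}(\cdot)$ — and using \eqref{weightederivation} together with the fact (noted after Lemma \ref{ddef}) that $P_0$ generates $\cI_n$ as an ideal, one produces, for each $m > n$, an element of $\cI_n$ whose degree-$1$ part has nonzero $J^{2m}$-component. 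Pushing forward by $\pi_n$ and reading off the resulting identity in $\cA(n)^{Sp(2n)}$ gives \eqref{hdrelm} directly. This is essentially the argument of Lemma 4.10 / Theorem 4.11 of \cite{LI} adapted to the odd setting, and I would cite that parallel explicitly.
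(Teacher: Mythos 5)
Your proof is correct and follows essentially the same route as the paper, whose proof of Lemma \ref{lemnonzeroii} is simply a citation to Lemma 8.3 of \cite{LV} — i.e., precisely the induction you describe, applying $j^2\circ_1$ to the relation of the previous weight and eliminating the resulting $j^{2n}$ (and lower inductively handled) terms; note that since $j^2\circ_i j^{2k}$ only produces $j^{2l}$ with $l\leq k+1\leq n<m$, no bare $j^{2m}$ can arise from the right-hand side at all, so your coefficient concern resolves even more directly than your degree argument suggests.
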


\begin{proof} This is the same as the proof of Lemma 8.3 of \cite{LV}. \end{proof}

\subsection{A recursive formula for $R_I$}
\label{recursionanalysis}
In this section we find a recursive formula for $R_I$ for any $I = (i_0,i_1,\dots, i_{2n+1})$ such that $\text{wt}(P_I) = 2n+2+ \sum_{a=0}^{2n+1} i_a$ is even. It will be clear from our formula that $R_0 \neq 0$. We introduce the notation \begin{equation} \label{remcoeff} R_{I} = R_n(I) J^m,\ \ \ \ \ \ \ m = 2n+ \sum_{a=0}^{2n+1} i_a,\end{equation} so that $R_n(I)$ denotes the coefficient of $J^m$ in $\text{pr}_m(P^2_{I})$. For $n=1$ and $I = (i_0, i_1, i_2, i_3)$ the following formula is easy to obtain using the fact that $\text{pr}_m(\Omega_{a,b}) =(-1)^m J^{m}$ for $m=a+b$.

\begin{equation}\label{startrecur}
\begin{split}
R_1(I) & = -\frac{1}{8} \bigg(  \frac{(-1)^{i_0 + i_2} +  (-1)^{i_0 + i_3} + (-1)^{i_1 + i_2}  +  (-1)^{i_1 + i_3}}{2 + i_0 + i_1}\\
&+ \frac{(-1)^{i_0 + i_1} +  (-1)^{i_0 + i_3} + (-1)^{i_1 + i_2}  +  (-1)^{i_2 + i_3}}{2 + i_0 + i_2} \\
&+ \frac{(-1)^{i_0+ i_1} +  (-1)^{i_0 + i_2} +  (-1)^{i_1 + i_3} +  (-1)^{i_2 + i_3}}{2 + i_1 + i_2} \\
&+ \frac{(-1)^{i_0 + i_1} + (-1)^{i_0+ i_2} +  (-1)^{i_1 + i_3} +  (-1)^{i_2 + i_3}}{2 + i_0 + i_3} \\
&+ \frac{(-1)^{i_0 + i_1} + (-1)^{i_0+ i_3}+ (-1)^{i_1 + i_2}  +  (-1)^{i_2 + i_3} }{2 + i_1 + i_3} \\
&+  \frac{ (-1)^{i_0 + i_2}+(-1)^{i_0 + i_3} + (-1)^{i_1 + i_2} + (-1)^{i_1+ i_3}}{2 + i_2 + i_3} \bigg).
 \end{split} \end{equation}

Assume that $R_{n-1}(J)$ has been defined for all $J$. Recall first that $\cA(n)$ is a graded algebra with $\mathbb{Z}_{\geq 0}$ grading (\ref{grading}), which specifies a linear isomorphism $$\cA(n)\cong \bigwedge \bigoplus_{k\geq 0} U_k,\ \ \ \ \ \ U_k \cong \mathbb{C}^{2n}.$$ Since $\cA(n)^{Sp(2n)}$ is a graded subalgebra of $\cA(n)$, we obtain an isomorphism of graded vector spaces \begin{equation}\label{linisomor} i_{n}: \cA(n)^{Sp(2n)} \rightarrow (\bigwedge \bigoplus_{k\geq 0} U_k)^{Sp(2n)}.\end{equation} Let $p\in (\bigwedge \bigoplus_{k\geq 0} U_k )^{Sp(2n)}$ be homogeneous of degree $2d$, and let $$f = (i_{n})^{-1}(p)\in (\cA(n)^{Sp(2n)})^{(2d)}$$ be the corresponding homogeneous element. Let $F\in (\cM_{n})_{(2d)}$ be an element satisfying $\pi_{n}(F) = f$, where $\pi_{n}: \cM_{n}\rightarrow \cA(n)^{Sp(2n)}$ is the projection. We can write $F = \sum_{k=1}^{d} F^{2k}$, where $F^{2k}$ is a normally ordered polynomial of degree $k$ in the $\Omega_{a,b}$.

Next, consider the rank $n+1$ symplectic fermion algebra $\cA(n+1)$, and let $$\tilde{q}_{a,b}\in (\bigwedge \bigoplus_{k\geq 0} \tilde{U}_k)^{Sp(2n+2)}\cong \text{gr}(\cA(n+1))^{Sp(2n+2)} \cong \text{gr}(\cA(n+1)^{Sp(2n+2)})$$ be the generator given by (\ref{weylgenerators}), where $\tilde{U}_k \cong \mathbb{C}^{2n+2}$. Let $\tilde{p}$ be the polynomial of degree $2d$ obtained from $p$ by replacing each $q_{a,b}$ with $\tilde{q}_{a,b}$, and let $$\tilde{f} = (i_{n+1})^{-1} (\tilde{p}) \in (\cA(n+1))^{Sp(2n+2)})^{(2d)}$$ be the corresponding homogeneous element. Finally, let $\tilde{F}^{2k}\in \cM_{n+1}$ be the element obtained from $F^{2k}$ by replacing each $\Omega_{a,b}$ with the corresponding generator $\tilde{\Omega}_{a,b}\in \cM_{n+1}$, and let $\tilde{F} = \sum_{i=1}^d \tilde{F}^{2k}$. 

\begin{lemma} \label{corhomo} Fix $n\geq 1$, and let $P_{I}$ be an element of $\cI_{n}$ given by Lemma \ref{ddef}. There exists a decomposition $P_{I} = \sum_{k=1}^{n+1} P^{2k}_{I}$ of the form \eqref{decompofd} such that the corresponding element $$\tilde{P}_{I} = \sum_{k=1}^{n+1} \tilde{P}^{2k}_{I} \in \cM_{n+1}$$ has the property that $\pi_{n+1}(\tilde{P}_{I})$ lies in the homogeneous subspace $(\cA(n+1)^{Sp(2n+2)})^{(2n+2)}$.
\end{lemma}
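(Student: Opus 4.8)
The plan is to build the decomposition $P_I=\sum_{k=1}^{n+1}P_I^{2k}$ of \eqref{decompofd} by running the procedure that produces it, making \emph{every} normal-ordering choice by a fixed formal recipe — a fixed order of the monomials and a fixed bracketing of the iterated Wick products — that does not refer to $n$. Since $\cM_n$ and $\cM_{n+1}$ are each freely generated by $\{J^{2m}\mid m\ge0\}$, such a recipe specifies a well-defined element of $(\cM_n)_{(2k)}$, and applying it instead to the variables $\tilde\Omega_{a,b}$ yields precisely the element $\tilde P_I^{2k}\in\cM_{n+1}$ of the statement. Concretely, take $P_I^{2n+2}$ to be the chosen normal ordering of $p_I$; since $p_I\in I_n$ we have $\phi_{2n+2}(\pi_n(P_I^{2n+2}))=p_I(q_{a,b})=0$, so $\pi_n(P_I^{2n+2})\in(\cA(n)^{Sp(2n)})_{(2n)}$; let $g_1$ be the degree-$n$ polynomial in the $q_{a,b}$ representing $\phi_{2n}(\pi_n(P_I^{2n+2}))$, let $P_I^{2n}$ be the chosen normal ordering of $-g_1$, and continue in the same way. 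This produces polynomials $g_1,g_2,\dots,g_n$ of degrees $n,n-1,\dots,1$ and elements $P_I^{2n-2},\dots,P_I^{2}$, and since every $P_I^{2k}$ has positive conformal weight, $\pi_n\!\big(\sum_kP_I^{2k}\big)$ — which a priori lies in $(\cA(n)^{Sp(2n)})_{(0)}=\mathbb C\mathbf 1$ — vanishes; thus $\sum_kP_I^{2k}\in(\cM_n)_{(2n+2)}\cap\cI_n$ with $\phi_{2n+2}$ equal to $p_I$, and Lemma~\ref{ddef} gives $P_I=\sum_{k=1}^{n+1}P_I^{2k}$.

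The crucial point is that the polynomials $g_1,\dots,g_n$ — and, more precisely, for each $k<j$ the field-degree-$2k$ component of $\pi_n(P_I^{2j})$, which equals $i_n^{-1}(h(q_{a,b}))$ for a homogeneous polynomial $h$ of degree $k$ — are given by formulas that do not depend on $n$. This is seen by expanding $\omega_{a,b}=\tfrac12\sum_{i=1}^n(:\partial^ae^i\partial^bf^i:+:\partial^be^i\partial^af^i:)$ into free fields and applying Wick's theorem: the propagator $e^i(z)f^j(w)\sim\delta_{i,j}(z-w)^{-2}$ carries no $n$, each term produced by a collection of contractions is a sum $\sum_i(\cdots)$ over a single index $i$ of a monomial in the fields $e^i_l,f^i_l$, and the reassembly of such sums into the $\omega_{a,b}$ and their derivatives is governed by coefficients — built from the propagator and from counting contractions — that do not see the range of the index $i$. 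The only mechanism that could introduce a factor of $n$ is a sub-cluster of $\omega$-factors that is \emph{entirely} contracted into a scalar, contributing a multiple of $n\cdot\mathbf 1$; but such a cluster is homogeneous of strictly positive conformal weight (each $\omega_{a,b}$ has weight $a+b+2\ge2$), so its contribution is $0$. Hence the whole construction above uses only $n$-independent data, the single exception being that the top component $p_I(q_{a,b})$ of $\pi_n(P_I^{2n+2})$ vanishes precisely because $p_I\in I_n$.

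Now carry everything over to $\cM_{n+1}$. The lifted elements $\tilde P_I^{2k}$ are the same recipes applied to the $\tilde\Omega_{a,b}$, so $\pi_{n+1}(\tilde P_I^{2k})$ is the corresponding normally ordered polynomial in the $\tilde\omega_{a,b}$, and by the previous paragraph all the intermediate polynomials $g_\ell$ and all the Wick-expansion coefficients below the top field-degree are the same as in rank $n$. Consequently, for $1\le k\le n$ the field-degree-$2k$ part of $\pi_{n+1}\!\big(\sum_{j\ge k}\tilde P_I^{2j}\big)$ equals $i_{n+1}^{-1}(g_{n+1-k}(\tilde q_{a,b}))$ (from the terms $j>k$) plus $i_{n+1}^{-1}(-g_{n+1-k}(\tilde q_{a,b}))$ (the leading part of $\tilde P_I^{2k}$), and hence vanishes; and there is no field-degree-$0$ part since every $\tilde P_I^{2k}$ has positive weight. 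The one and only difference from rank $n$ is at the top: whereas $p_I(q_{a,b})=0$ because $p_I\in I_n$, in rank $n+1$ we have $p_I\notin I_{n+1}$ — by \eqref{weylquotodd} the ideal $I_{n+1}$ is generated by $(n+2)$-fold Pfaffians, which are homogeneous of degree $n+2$, while $p_I$ has degree $n+1$, so $(I_{n+1})_{n+1}=0$ — whence $\tilde p_I:=p_I(\tilde q_{a,b})\ne0$, and only $\tilde P_I^{2n+2}$ contributes in field-degree $2n+2$, contributing $i_{n+1}^{-1}(\tilde p_I)$. Therefore $\pi_{n+1}(\tilde P_I)=i_{n+1}^{-1}(\tilde p_I)$; since $\tilde p_I$ is homogeneous of degree $2(n+1)$ in $\bigwedge\bigoplus_{k\ge0}\tilde U_k$ and $i_{n+1}$ identifies field-degree with that grading, $\pi_{n+1}(\tilde P_I)\in(\cA(n+1)^{Sp(2n+2)})^{(2n+2)}$, as required.

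The main obstacle is the rank-independence asserted in the second paragraph: one must verify, by a careful bookkeeping of Wick's theorem, that no factor of $n$ is produced below the top field-degree. This is entirely parallel to the analogous step for $\cS(n)$ and $\cF(n)$ in \cite{LV} and for $\cH(n)$ in \cite{LIV}; granting it, the remaining steps are a formal consequence of the construction of \eqref{decompofd} and the uniqueness in Lemma~\ref{ddef}.
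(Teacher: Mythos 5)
Your proposal is correct and follows essentially the same route as the paper's omitted proof, which defers to Corollary 4.14 of \cite{LI}: one builds the decomposition \eqref{decompofd} by a normal-ordering recipe whose sub-leading corrections are given by rank-independent formulas (the only possible source of a factor of $n$ being a fully contracted cluster, which is excluded by conformal-weight homogeneity), so that the lift to $\cM_{n+1}$ cancels in every field-degree below the top and $\pi_{n+1}(\tilde{P}_I)=i_{n+1}^{-1}(p_I(\tilde q_{a,b}))$ is homogeneous of degree $2n+2$. The Wick-theorem bookkeeping you flag is exactly the content of the cited argument, and your treatment of it is at the level of detail the paper itself supplies.
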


\begin{proof} The argument is the same as the proof of Corollary 4.14 of \cite{LI}, and is omitted.
\end{proof}

Recall that $p_I$ has an expansion $p_I =  \sum_{r=1}^{2n+1} q_{i_0,i_r} p_{I_r}$, where $I_r = (i_1,\dots, \widehat{i_r},\dots, i_{2n+1})$ is obtained from $I$ by omitting $i_0$ and $i_r$. Let $P_{I_r} \in \cM_{n-1}$ be the element corresponding to $p_{I_r}$. By Lemma \ref{corhomo}, there exists a decomposition $$P_{I_r} = \sum_{i=1}^n P^{2i}_{I_r}$$ such that the corresponding element $\tilde{P}_{I_r} = \sum_{i=1}^n \tilde{P}^{2i}_{I_r} \in \cM_{n}$ has the property that $\pi_{n}(\tilde{P}_{I_r})$ lies in $(\cA(n)^{Sp(2n)})^{(2n)}$. We have \begin{equation}\label{usefuli} \sum_{r=1}^{2n+1} :\Omega_{i_0, i_r} \tilde{P}_{I_r}:\  = \sum_{r=1}^{2n+1} \sum_{i=1}^n :\Omega_{i_0,i_r} \tilde{P}^{2i}_{I_r}:.\end{equation} 

The right hand side of \eqref{usefuli} consists of normally ordered monomials of degree at least $2$ in the generators $\Omega_{a,b}$, and hence contributes nothing to $R_n(I)$. Since $\pi_{n}(\tilde{P}_{I_r})$ is homogeneous of degree $2n$, $\pi_{n}(:\Omega_{i_0,i_r} \tilde{P}_{I_r}:)$ consists of a piece of degree $2n+2$ and a piece of degree $2n$ coming from all double contractions of $\Omega_{i_0,i_r}$ with terms in $\tilde{P}_{I_r}$, which lower the degree by two. The component of $$\pi_{n}\bigg(\sum_{r=1}^{2n+1} :\Omega_{i_0, i_r} \tilde{P}_{I_r}:\bigg)\in \cA(n)^{Sp(2n)}$$ in degree $2n+2$ must cancel since this sum corresponds to $p_I$, which is a relation among the variables $q_{a,b}$. The component of $:\Omega_{i_0,i_r}\tilde{P}_{I_r}:$ in degree $2n$ is
\begin{equation} \label{crunch} S_r= \frac{1}{2} \bigg( (-1)^{i_0+1} \sum_{a} \frac{\tilde{P}_{I_{r,a}}}{i_0+i_{a} +2} + (-1)^{i_r+1} \sum_{a}  \frac{\tilde{P}_{I_{r,a}}}{i_r+i_{a}+2}\bigg)\end{equation}
In this notation, for $a \in \{i_0,\dots, i_{2n+1}\} \setminus \{i_0, i_r\}$, $I_{r,a}$ is obtained from $I_r = (i_1,\dots, \widehat{i_r},\dots, i_{2n+1})$ by replacing $i_a$ with $i_a+i_0+i_r+2$. It follows that \begin{equation} \label{usefulii} \pi_{n}\bigg(\sum_{r=1}^{2n+1}  :\Omega_{i_0, i_r} \tilde{P}_{I_r}:\bigg)= \pi_{n}\bigg(\sum_{r=1}^{2n+1}  S_r \bigg).\end{equation} Combining \eqref{usefuli} and \eqref{usefulii}, we can regard $$\sum_{r=1}^{2n+1} \sum_{i=1}^n  :\Omega_{i_0,i_r} \tilde{P}^{2i}_{I_r}: - \sum_{r=0}^n S_r$$ as a decomposition of $P_{I}$ of the form $P_{I} = \sum_{k=1}^{n+1} P^{2k}_{I}$ where the leading term $P^{2n+2}_{I} = \sum_{r=0}^{2n+1}  :\Omega_{i_0, i_r} \tilde{P}^{2n}_{I_r}:$. Therefore $R_n(I)$ is the negative of the sum of the terms $R_{n-1}(J)$ corresponding to each $\tilde{P}_{J}$ appearing in $\sum_{r=0}^{2n+1}  S_r$, so we obtain the following result.

\begin{thm} \label{recformula} $R_n(I)$ satisfies the recursive formula
\begin{equation} \label{recursion}R_n(I) = -\frac{1}{2} \sum_{r=1}^{2n+1} \bigg( (-1)^{i_0+1} \sum_{a} \frac{R_{n-1}(I_{r,a})}{i_0+i_{a} +2} + (-1)^{i_r+1} \sum_{a}  \frac{R_{n-1}(I_{r,a})}{i_r+i_{a}+2}\bigg) .\end{equation} \end{thm}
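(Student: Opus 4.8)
The plan is an induction on $n$, with base case $n=1$ given by formula \eqref{startrecur}; that formula is obtained by expanding $P_I$ for $I=(i_0,i_1,i_2,i_3)$ directly from the $n=1$ Pfaffian \eqref{pfaffodd}, the normal-ordering identity $\text{pr}_m(\Omega_{a,b})=(-1)^m J^m$ (with $m=a+b$, which is even under our hypothesis), and the definition \eqref{defofrij} of the remainder. For the inductive step, fix $I=(i_0,\dots,i_{2n+1})$ with $m:=2n+\sum_a i_a$ even. By the inductive Pfaffian formula \eqref{pfaffinductionodd}, $p_I=\sum_{r=1}^{2n+1} q_{i_0,i_r}\,p_{I_r}$ where each $p_{I_r}$ is a rank-$(n-1)$ Pfaffian. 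For each $r$, Lemma \ref{corhomo} supplies a decomposition $P_{I_r}=\sum_i P^{2i}_{I_r}$ of the corresponding relation in $\cM_{n-1}$ whose rank-$n$ lift $\tilde{P}_{I_r}\in\cM_n$ has $\pi_n(\tilde{P}_{I_r})$ homogeneous of degree $2n$, and by Lemma \ref{uniquenessofr} the remainder $R_{n-1}(I_r)J^m$ is independent of this choice. The whole argument then rests on building a decomposition of $P_I$ out of the Wick products $:\Omega_{i_0,i_r}\tilde{P}_{I_r}:$ in $\cM_n$.

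Applying $\pi_n$ and expanding these Wick products in $\cA(n)$ using the contractions $e^i(z)f^j(w)\sim\delta_{i,j}(z-w)^{-2}$: the degree-$(2n+2)$ component of $\pi_n\big(\sum_r :\Omega_{i_0,i_r}\tilde{P}_{I_r}:\big)$ vanishes, since in $\text{gr}$ this sum is the classical relation $p_I=\sum_r q_{i_0,i_r}p_{I_r}=0$ among the $q_{a,b}$; the surviving degree-$2n$ component comes entirely from contracting the two fermions making up each $\Omega_{i_0,i_r}$ against the quadratic monomials of $\tilde{P}_{I_r}$, and tracking which derivative labels get shifted (by $i_0+i_r+2$) and which denominators appear ($i_0+i_a+2$ and $i_r+i_a+2$) identifies this component with $\pi_n\big(\sum_r S_r\big)$, $S_r$ as in \eqref{crunch}. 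Hence $\sum_r\sum_i :\Omega_{i_0,i_r}\tilde{P}^{2i}_{I_r}: - \sum_r S_r$ is a decomposition of $P_I$ of the form \eqref{decompofd} with leading term $\sum_r:\Omega_{i_0,i_r}\tilde{P}^{2n}_{I_r}:$. The double sum consists of Wick monomials of degree $\geq 2$ in the $\Omega_{a,b}$, so the degree-$2$ part $P^2_I$ equals the degree-$2$ part of $-\sum_r S_r$, namely $-\tfrac12\sum_r\big((-1)^{i_0+1}\sum_a \tilde{P}^2_{I_{r,a}}/(i_0+i_a+2)+(-1)^{i_r+1}\sum_a \tilde{P}^2_{I_{r,a}}/(i_r+i_a+2)\big)$. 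Applying $\text{pr}_m$, which is linear and satisfies $\text{pr}_m(\tilde{P}^2_{I_{r,a}})=R_{n-1}(I_{r,a})J^m$ (one checks $\text{wt}(P_{I_{r,a}})=\text{wt}(P_I)$, so $\tilde{P}^2_{I_{r,a}}\in A_m$), collects the terms into exactly \eqref{recursion}. Since every denominator in \eqref{startrecur} is of the form $2+i_a+i_b>0$, the recursion then makes it transparent by induction that $R_0\neq 0$, which is the hypothesis used in Lemmas \ref{lemnonzero} and \ref{lemnonzeroii}.

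The step I expect to be the main obstacle is the identification of the degree-$2n$ part of $:\Omega_{i_0,i_r}\tilde{P}_{I_r}:$ with $S_r$ --- the only genuine computation in the argument. It is delicate because each $\Omega_{a,b}$ is a symmetrized quadratic $\tfrac12(:\partial^a e\,\partial^b f:+:\partial^b e\,\partial^a f:)$, so every double contraction against a term of $\tilde{P}_{I_r}$ splits into several sub-contractions whose binomial coefficients (from the OPE of derivatives of $e$ and $f$) and label shifts must be checked to reassemble into the single denominators $i_0+i_a+2$, $i_r+i_a+2$ and the signs $(-1)^{i_0+1},(-1)^{i_r+1}$ of \eqref{crunch}; one must also verify that the degree-$(2n+2)$ cancellation is exact and that no still-lower-degree contributions interfere when $\text{pr}_m$ is applied. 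Everything else --- the induction scheme, the use of homogeneity via Lemma \ref{corhomo}, and the well-definedness of the remainder via Lemma \ref{uniquenessofr} --- is formal.
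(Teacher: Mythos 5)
Your proposal follows the paper's argument essentially verbatim: the same induction on $n$ with base case \eqref{startrecur}, the same use of the Pfaffian expansion $p_I=\sum_r q_{i_0,i_r}p_{I_r}$ together with Lemmas \ref{corhomo} and \ref{uniquenessofr}, and the same identification of the degree-$2n$ part of $:\Omega_{i_0,i_r}\tilde{P}_{I_r}:$ with $S_r$ to produce the decomposition $\sum_{r}\sum_{i}:\Omega_{i_0,i_r}\tilde{P}^{2i}_{I_r}:-\sum_r S_r$ whose degree-$2$ part yields \eqref{recursion}. The double-contraction computation you flag as the delicate step is exactly the one the paper records as \eqref{crunch}, and your weight check $\mathrm{wt}(P_{I_{r,a}})=\mathrm{wt}(P_I)$ is correct, so the argument is sound and coincides with the paper's.
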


Now suppose that all the entries $i_0,\dots, i_{2n+1}$ appearing in $I$ are even. Clearly each $I_{r,a}$ appearing in \eqref{recursion} consists only of even entries as well. In the case $n=1$ and $I = (i_0,i_1, i_2, i_3)$, \eqref{startrecur} reduces to
$$R_1(I) = -\frac{1}{2} \bigg(\frac{1}{2 + i_0 + i_1} + \frac{1}{2 + i_0 + i_2} + \frac{1}{2 + i_1 + i_2} +\frac{1}{2 + i_0 + i_3} + \frac{1}{2 + i_1 + i_3}  + \frac{1}{2 + i_2 + i_3} \bigg),$$ so in particular $R_1(I) \neq 0$. By induction on $n$, it is immediate from \eqref{recursion} that $R_n(I) \neq 0$ whenever $I$ has even entries.

\begin{thm} \label{maincor}For all $n\geq 1$, $\cA(n)^{Sp(2n)}$ has a minimal strong generating set $\{j^0, j^{2}, \dots, j^{2n-2}\}$, and is therefore a $\cW$-algebra of type $\cW(2,4,\dots, 2n)$. \end{thm}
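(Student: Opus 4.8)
The plan is to assemble the pieces already in place. First I would recall that by Lemma \ref{lemnonzero}, the nonvanishing of the remainder $R_0$ of the minimal-weight relation $P_0 \in \cI_n$ is equivalent to the existence of a decoupling relation $j^{2n} = Q(j^0, j^2, \dots, j^{2n-2})$ in $\cA(n)^{Sp(2n)}$. The recursive formula of Theorem \ref{recformula}, together with the base case \eqref{startrecur}, gives $R_n(I) = R_n(I)\, J^m$ with $R_n(I) \neq 0$ whenever all entries of $I$ are even; applying this to $I = (0,0,\dots,0)$ shows $R_0 \neq 0$. Hence the decoupling relation \eqref{maindecoupling} holds, and by Lemma \ref{lemnonzeroii} we get higher decoupling relations $j^{2m} = Q_m(j^0,\dots,j^{2n-2})$ for all $m \geq n$.

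Next I would use these relations to prune the strong generating set. We already know from \eqref{lincomb} that $\{j^{2m}|~m\geq 0\}$ strongly generates $\cA(n)^{Sp(2n)}$, since the $\omega_{a,b}$ are linear combinations of the $\partial^{2i}j^{2m-2i}$ and conversely. Given the decoupling relations, every $j^{2m}$ with $m \geq n$ can be written as a normally ordered polynomial in $j^0, j^2, \dots, j^{2n-2}$ and their derivatives, so by induction (replacing occurrences of $j^{2m}$, $m \geq n$, in any Wick monomial) the set $\{j^0, j^2, \dots, j^{2n-2}\}$ already strongly generates. This is the existence half of the statement.

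For minimality — equivalently, that the algebra is genuinely of type $\cW(2,4,\dots,2n)$ and not of a smaller type — I would argue that no $j^{2k}$ with $0 \leq k \leq n-1$ can be expressed as a normally ordered polynomial in the remaining generators and their derivatives. The cleanest route is a weight/degree count: a relation expressing $j^{2k}$ in terms of the others would be a nontrivial element of $\cI_n$ of weight $2k+2 \leq 2n$, contradicting Lemma \ref{ddef} (and the remark following Lemma \ref{ddef}) that $\cI_n$ contains no nonzero element of weight less than $2n+2$. Since $P_0$ has weight exactly $2n+2$, the weights $2, 4, \dots, 2n$ are forced, giving the type $\cW(2,4,\dots,2n)$; the weight-$2$ generator $j^0$ is (a multiple of) the conformal vector $L^{\cA}$, consistent with the $\cW$-algebra convention.

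The main obstacle is not really an obstacle here: all the hard analytic work — establishing the recursion for $R_I$ and verifying the base case is nonzero — has already been done in Section \ref{recursionanalysis}, and the minimality is a soft consequence of the degree-filtration bound in Lemma \ref{ddef}. The only point requiring a little care is making the induction that eliminates the higher $j^{2m}$ from arbitrary iterated Wick products rigorous, i.e., checking that substituting a decoupling relation into a normally ordered monomial and re-expanding does not reintroduce generators outside $\{j^0,\dots,j^{2n-2}\}$ at the same or higher filtration degree in an uncontrolled way; this is handled exactly as in the analogous arguments of \cite{LI,LV}, using that the decoupling relations are filtration-degree-lowering modulo the ideal.
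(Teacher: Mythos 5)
Your proposal is correct and follows essentially the same route as the paper: the paper's proof of Theorem \ref{maincor} likewise observes that $R_n(0,0,\dots,0)\neq 0$ via the recursion of Theorem \ref{recformula} and the base case \eqref{startrecur}, and then invokes Lemma \ref{lemnonzeroii} to obtain the decoupling relations, with minimality following from the absence of relations of weight below $2n+2$. (Only a notational slip: you mean $R_I = R_n(I)\,J^m$, not $R_n(I) = R_n(I)\,J^m$.)
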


\begin{proof} For $I = (0,0,\dots, 0)$, we have $R_n(I) \neq 0$, so $R_0 = R_n(I) J^{2n} \neq 0$. The claim then follows from Lemma \ref{lemnonzeroii}. \end{proof}

Using Theorem \ref{maincor}, it is now straightforward to describe $\cA(mn)^{Sp(2n)}$ for all $m,n\geq 1$. Denote the generators of $\cA(mn)$ by $e^{i,j}, f^{i,j}$ for $i=1,\dots,n$ and $j=1,\dots, m$, which satisfy
$$e^{i,j}(z) f^{k,l}(w) \sim \delta_{i,k} \delta_{j,l} (z-w)^{-2}.$$ Under the action of $Sp(2n)$, $\{e^{i,j}, f^{i,j}|\ i=1,\dots, n\}$ spans a copy of the standard $Sp(2n)$-module $\mathbb{C}^{2n}$ for each $j=1,\dots, m$. Define
$$\omega^{j,k}_{a,b} = \frac{1}{2}\sum_{i=1}^n \big(:\partial^a e^{i,j} \partial ^b f^{i,k}: + :\partial^b e^{i,k} \partial^a f^{i,j}:\big).$$
By Theorem \ref{weylfft}, $\{\omega^{j,k}_{a,b}|\ 1\leq j,k \leq m,\ a,b\geq 0\}$ strongly generates $\cA(mn)^{Sp(2n)}$. In fact, $$\{\omega^{j,j}_{0,2k}|\ k\geq 0,\ j=1,\dots, m\} \bigcup \{\omega^{j,k}_{0,l}\ 1\leq j<k\leq m,\ l\geq 0\}$$ also strongly generates $\cA(mn)^{Sp(2n)}$. This is clear because the corresponding elements of $\text{gr}(\cA(mn)^{Sp(2n)})$ generate $\text{gr}(\cA(mn)^{Sp(2n)})$ as a $\partial$-ring. 

\begin{thm} \label{mn} $\cA(mn)^{Sp(2n)}$ has a minimal strong generating set \begin{equation} \label{fullsetofgen} \{\omega^{j,j}_{0,2r}|\ 1\leq j \leq m,\ 0\leq r \leq n-1\} \bigcup \{ \omega^{j,k}_{0,s}|\ 1\leq j<k \leq m,\ 0\leq s \leq  2n-1\}.\end{equation}
\end{thm}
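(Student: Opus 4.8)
The plan is to combine Theorem \ref{maincor} with the invariant theory of $Sp(2n)$ acting diagonally on $m$ copies of $\mathbb{C}^{2n}$, following the same strategy used for $\cA(n)^{Sp(2n)}$ but now keeping track of the extra $GL(m)$ (or, for the strong generation count, $\mathfrak{gl}(m)$) symmetry. Weyl's first fundamental theorem (Theorem \ref{weylfft}, in its exterior-algebra form) says $\text{gr}(\cA(mn)^{Sp(2n)})$ is generated as a $\partial$-ring by the quadratics $q^{j,k}_{a,b}$ for $1\le j,k\le m$, $a,b\ge 0$, with $q^{j,k}_{a,b}=q^{k,j}_{b,a}$, and by Lemma \ref{reconlem} the $\omega^{j,k}_{a,b}$ strongly generate $\cA(mn)^{Sp(2n)}$. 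As in the $m=1$ case, one first reduces the generating set: for the diagonal blocks $j=k$ one has, exactly as in \eqref{deca}--\eqref{lincomb}, that $\{\omega^{j,j}_{0,2r}\mid r\ge 0\}$ together with derivatives spans the diagonal part; for the off-diagonal blocks $j<k$ there is no antisymmetry forcing vanishing of $A_{2m+1}/\partial(A_{2m})$, so one needs both the even and odd weight generators $\{\omega^{j,k}_{0,s}\mid s\ge 0\}$ (the count of the relevant spaces $A_m^{j,k}$ shows one generator in each weight $s+2$, $s\ge 0$). This gives the (infinite) strong generating set displayed before the theorem statement, and the content of Theorem \ref{mn} is that one may truncate $r$ at $n-1$ and $s$ at $2n-1$.

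The mechanism for the truncation is the existence of decoupling relations. First I would observe that for each fixed $j$, the subalgebra of $\cA(mn)^{Sp(2n)}$ generated by $\{\omega^{j,j}_{0,2r}\}$ and $\{\omega^{j,j}_{a,b}\}$ is isomorphic to $\cA(n)^{Sp(2n)}$ (it is the image of $\cA(n)^{Sp(2n)}$ under the embedding $\cA(n)\hookrightarrow \cA(mn)$ sending $e^i\mapsto e^{i,j}$, $f^i\mapsto f^{i,j}$), so by Theorem \ref{maincor} there is a decoupling relation $\omega^{j,j}_{0,2n}=j^{2n}_{(j)} = Q(\omega^{j,j}_{0,0},\dots,\omega^{j,j}_{0,2n-2})$, and iterating via $\omega^{j,j}_{0,2}\circ_1(-)$ as in Lemma \ref{lemnonzeroii}, all higher $\omega^{j,j}_{0,2r}$, $r\ge n$, decouple. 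This removes all diagonal generators of weight $>2n$. Next, to handle the off-diagonal generators, I would apply the operators $\omega^{j,k}_{0,2}\circ_1(-)$ (and similar circle products mixing the blocks, i.e. the action of the Lie algebra $\cP$ analogue) to the decoupling relations already obtained: because $\mathfrak{gl}(m)$ acts on the index pairs $(j,k)$, one can move a decoupling relation from a diagonal block to an off-diagonal block and read off that $\omega^{j,k}_{0,s}$ for $s\ge 2n$ is a normally ordered polynomial in the remaining generators. Concretely the relation $P_0$ of weight $2n+2$, now living in the $m$-fold setup, has many "coloured" analogues $P_0^{j_0,\dots,j_{2n+1}}$ indexed by a choice of block colour for each slot, and the one that is genuinely new for the off-diagonal count is the relation of weight $2n+1+2 = 2n+2$ in block $(j,k)$ coming from the coloured Pfaffian with entries $(0,\dots,0)$ in colours making the total weight land at the needed value; its remainder is nonzero by the same recursion \eqref{recursion} (all the denominators $i_a+i_b+2$ are positive and all signs align once one passes to the even-entry reduction), so it provides the seed decoupling relation $\omega^{j,k}_{0,2n} = (\text{lower})$, and iterating gives all $\omega^{j,k}_{0,s}$ with $s\ge 2n$.

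Finally, minimality: I would show no generator in the displayed set \eqref{fullsetofgen} can be removed by exhibiting, via the character/Hilbert-series computation referenced in the introduction (or directly by counting dimensions of weight spaces of $\text{gr}(\cA(mn)^{Sp(2n)})$ against the dimensions of the free $\partial$-ring on the proposed generators), that the generators are linearly independent modulo decomposables and derivatives in each weight $\le 2n$. Equivalently, one checks that no normally ordered relation among the generators of weight $\le 2n$ exists, which follows from the second fundamental theorem: the ideal of relations $I_{mn}$ for $Sp(2n)$ on $m$ copies is generated in degree $n+1$, hence weight $\ge 2n+2$, so there is nothing in low weight forcing a further reduction. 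The main obstacle I anticipate is the off-diagonal decoupling step: one must check carefully that the coloured analogue of the recursive formula \eqref{recursion} still has a nonvanishing remainder in the off-diagonal blocks — the sign bookkeeping in \eqref{startrecur}/\eqref{recursion} was done for a single block, and in the multi-block setting the antisymmetry pattern $q^{j,k}_{a,b}=q^{k,j}_{b,a}$ interacts nontrivially with the parity of the $i_a$'s, so one has to verify that for the off-diagonal seed relation the relevant entries can be chosen even (or that the mixed-parity sum still does not cancel), so that $\text{pr}_m$ of the degree-$2$ part is genuinely nonzero and the iteration of Lemma \ref{lemnonzeroii} goes through in both weight parities.
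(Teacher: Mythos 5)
Your skeleton is the paper's: decouple the diagonal generators $\omega^{j,j}_{0,2r}$, $r\ge n$, by applying Theorem \ref{maincor} to each embedded copy of $\cA(n)^{Sp(2n)}$, transport these relations to the off-diagonal blocks with circle products, and get minimality for free because the relation ideal is generated in degree $n+1$, hence weight $\ge 2n+2$. The place where you diverge from the paper is exactly the place you flag as the main obstacle, and the divergence is the source of the (apparent) obstacle: you do not need any ``coloured'' analogue of $P_0$ nor any new remainder computation in the off-diagonal blocks, so the sign/parity bookkeeping you worry about never arises. The paper's off-diagonal step is just your first idea carried out literally: one computes
$$\omega^{j,k}_{0,0}\circ_1\,\omega^{j,j}_{0,2r}=-(r+1)\,\omega^{j,k}_{0,2r}+\partial\omega,$$
with $\omega$ a linear combination of $\partial^{2r-t}\omega^{j,k}_{0,t}$, $t<2r$, together with the analogous identity for the derivative. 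Applying $\omega^{j,k}_{0,0}\circ_1$ to the diagonal decoupling relation $\omega^{j,j}_{0,2r}=Q_r(\omega^{j,j}_{0,0},\dots,\omega^{j,j}_{0,2n-2})$ and to its derivative then solves for $\omega^{j,k}_{0,2r}$ and $\omega^{j,k}_{0,2r+1}$ ($r\ge n$) outright: the coefficient $-(r+1)$ is visibly nonzero, both weight parities are reached, and the right-hand sides involve only $\omega^{j,j}_{0,2t}$ with $t\le n-1$ and $\omega^{j,k}_{0,u}$ with $u\le 2n-1$, since circle products do not raise weight and each single contraction of $\omega^{j,k}_{0,0}$ against a factor $\omega^{j,j}_{a,b}$ with $a+b\le 2n-2$ lands in the $(j,k)$ block in weight at most $2n+1$. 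So I would delete the coloured-Pfaffian detour entirely; with it removed, your argument is essentially the paper's proof. (Minor quibble: your $\omega^{j,k}_{0,2}\circ_1$ should be $\omega^{j,k}_{0,0}\circ_1$, or at least one must check the leading coefficient is nonzero for whichever transporting operator is used.)
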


\begin{proof} By Theorem \ref{maincor}, we have decoupling relations \begin{equation}\label{eachj} \omega^{j,j}_{0,2r} = Q_r(\omega^{j,j}_{0,0},\dots, \omega^{j,j}_{0,2n-2}),\end{equation} for all $r\geq n$ and $j=1,\dots, m$. We now construct decoupling relations expressing each $\omega^{j,k}_{0,a}$ as a normally polynomial in the generators \eqref{fullsetofgen} and their derivatives, for $j<k$ and $a\geq 2n$. We need the following calculations.
 $$\omega^{j,k}_{0,0} \circ_1 \omega^{j,j}_{0,2r} = -(r+1) \omega^{j,k}_{0,2r} + \partial \omega,\qquad \omega^{j,k}_{0,0} \circ_1 \partial \omega^{j,j}_{0,2r+1} = - \omega^{j,k}_{0,2r+1} + \partial \nu.$$ Here $\omega$ is a linear combination of $\partial^{2r-t}\omega^{j,k}_{0,t}$ for $t=0,\dots, 2r-1$, and $\nu$ is a linear combination of $\partial^{2r-t+1}\omega^{j,k}_{0,t}$ for $t=0,\dots, 2r$. Therefore applying the operator $\omega^{j,k}_{0,0} \circ_1$ to \eqref{eachj} yields a relation
 $$\omega^{j,k}_{0,2r} = \tilde{Q}_r(\omega^{j,j}_{0,0},\omega^{j,j}_{0,2},\dots \omega^{j,j}_{0,2n-2},\omega^{j,k}_{0,0}, \omega^{j,k}_{0,1},\dots, \omega^{j,k}_{0,2n-1}),$$ for all $r\geq n$. Similarly, applying $\omega^{j,k}_{0,0} \circ_1$ to the derivative of \eqref{eachj} yields a relation $$\omega^{j,k}_{0,2r+1} = \bar{Q}_r(\omega^{j,j}_{0,0},\omega^{j,j}_{0,2},\dots \omega^{j,j}_{0,2n-2},\omega^{j,k}_{0,0}, \omega^{j,k}_{0,1},\dots, \omega^{j,k}_{0,2n-1}),$$ for all $r\geq n$. This shows that \eqref{fullsetofgen} is a strong generating set for $\cA(mn)^{Sp(2n)}$. It is {\it minimal} because there are no normally ordered relations of weight less than $2n+2$.
\end{proof}

\section{The structure of $\cA(n)^{GL(n)}$}
The subgroup $GL(n)\subset Sp(2n)$ acts on $\cA(n)$ such that the generators $\{e^i\}$ and $\{f^i\}$ of $\cA(n)$ span copies of the standard $GL(n)$-modules $\mathbb{C}^n$ and $(\mathbb{C}^*)^n$, respectively. In this section, we use a similar approach to find a minimal strong generating set for $\cA(n)^{GL(n)}$. First, we have isomorphisms $$\text{gr}(\cA(n)^{GL(n)}) \cong \text{gr}(\cA(n))^{GL(n)} \cong  R := \big( \bigwedge \bigoplus_{j\geq 0} (V_j \oplus V^*_j)\big)^{GL(n)},$$ where $V_j \cong \mathbb{C}^n$ and $V^*_j \cong (\mathbb{C}^n)^*$ as $GL(n)$-modules. By an odd analogue of Weyl's first and second fundamental theorems of invariant theory for the standard representation of $GL(n)$, $R$ is generated by the quadratics $p_{a,b} = \sum_{i=1}^n x_a^i y_b^i$ where $\{x^i_a\}$ is a basis for $V_a$ and $\{y^i_b\}$ is the dual basis for $V^*_b$. The ideal of relations is generated by elements $d_{I,J}$ of degree $n+1$, which are indexed by lists $I = (i_0,i_1,\dots, i_n)$ and $J = (j_0, j_1, \dots, j_n)$ of integers satisfying $0\leq i_0 \leq \cdots \leq i_n$ and $0\leq j_0 \leq \cdots \leq j_n$, which are analogous to determinants but without the signs. (This is a special case of Theorems 2.1 and 2.2 of \cite{SII}). For $n=1$, $$d_{I,J} = p_{i_0, j_0} p_{i_1, j_1}+ p_{i_1, j_0} p_{i_0, j_1},$$ and for $n>1$, $d_{I,J}$ is defined inductively by $$d_{I,J} = \sum_{r=0}^n p_{i_r, j_0} d_{I_r, J'},$$ where $I_r = (i_0,\dots, \hat{i_r} \dots, i_n)$ is obtained from $I$ by omitting $i_r$, and $J' = (j_1,\dots, j_n)$ is obtained from $J$ by omitting $j_0$.

The generators $p_{a,b}$ correspond to strong generators 
$$\gamma_{a,b} = \sum_{i=1}^n :\partial^a e^i \partial^b f^i:,\qquad a,b \geq 0,$$ for $\cA(n)^{GL(n)}$, where $\text{wt}(\gamma_{a,b}) = a+b+2$. Let $A_m$ be spanned by $\{\gamma_{a,b}|\ a+b = m\}$. Then $\text{dim}(A_m) = m+1$ and $\text{dim}(A_m / \partial A_{m-1}) = 1$, so $$A_m = \partial A_{m-1} \oplus \langle h^m \rangle,\qquad h^m = \gamma_{0,m}.$$ Since $\{\partial^a h^{m-a}|\ a=0,\dots, m\}$ and $\{\gamma_{a,m-a}|\ a=0,\dots, m\}$ are both bases for $A_m$, $\{h^k |\ k\geq 0\}$ strongly generates $\cA(n)^{GL(n)}$.

\begin{lemma} $\cA(n)^{GL(n)}$ is generated as a vertex algebra by $h^0$ and $h^1$.
\end{lemma}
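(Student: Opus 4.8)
The plan is to mimic exactly the proof of the analogous statement for $\cA(n)^{Sp(2n)}$ given earlier in the excerpt, namely that $\cA(n)^{Sp(2n)}$ is generated by $j^0$ and $j^2$. There the key point was a single OPE computation of the form $j^2\circ_1 j^{2k} = -(2k+4)j^{2k+2} + \partial^2\omega$, which lets one bootstrap from $\{j^0,j^2\}$ to all $\{j^{2k}\}$, and hence to all of $\cA(n)^{Sp(2n)}$ since the latter set strongly generates. Here the strong generating set $\{h^k\mid k\geq 0\}$ for $\cA(n)^{GL(n)}$ plays the role of $\{j^{2k}\}$, so it suffices to show every $h^k$ is generated by $h^0$ and $h^1$.

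First I would record that, by the discussion preceding the lemma, $\{h^k\mid k\geq 0\}$ strongly generates $\cA(n)^{GL(n)}$, so it is enough to show each $h^k$ lies in the vertex subalgebra generated by $h^0$ and $h^1$; I would proceed by induction on $k$, assuming $h^0,\dots,h^k$ are all generated. Next I would compute the relevant circle product: since $\gamma_{a,b}=\sum_i :\partial^a e^i\,\partial^b f^i:$ and $e^i(z)f^j(w)\sim\delta_{i,j}(z-w)^{-2}$, a direct Wick-calculus computation of $\gamma_{0,1}\circ_1 \gamma_{0,k}$ (i.e. $h^1\circ_1 h^k$) contracts one factor from each normally ordered product, producing a term proportional to $\gamma_{0,k+1}=h^{k+1}$ plus lower terms which, after collecting, lie in $\partial A_k$ — that is, are linear combinations of $\partial^{k+1-t}h^t$ for $t\leq k$. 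The key structural input is that $A_{k+1}=\partial A_k\oplus\langle h^{k+1}\rangle$, already established, so such a computation can always be written as $h^1\circ_1 h^k = c_k\, h^{k+1} + \partial(\text{something in }A_k)$ for a nonzero constant $c_k$. I would verify $c_k\neq 0$ by extracting the coefficient of $:\partial^{k+1}e^i f^i:$ (equivalently, tracking the second-order pole contraction), which gives something like $c_k = -(k+?)$, explicitly nonzero.

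Granting the formula $h^1\circ_1 h^k = c_k h^{k+1}+\partial\mu$ with $c_k\neq 0$ and $\mu\in A_k$, the induction closes: $\mu$ is a linear combination of derivatives of $h^0,\dots,h^k$, all generated by the inductive hypothesis, so $h^{k+1}=c_k^{-1}(h^1\circ_1 h^k - \partial\mu)$ is generated by $h^0$ and $h^1$ as well. Since $\{h^k\}$ strongly generates $\cA(n)^{GL(n)}$, this proves the lemma.

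The main obstacle I anticipate is purely computational: carrying out the Wick contraction in $\gamma_{0,1}\circ_1\gamma_{0,m}$ carefully enough to see that the ``error'' term really is a total derivative (lies in $\partial A_m$) rather than having an independent $h^{m+1}$-free-but-non-derivative part, and to pin down that the leading coefficient $c_m$ is nonzero. This is exactly parallel to the $Sp(2n)$ case, where the analogous identity is simply asserted, so I would either state it as an elementary OPE computation or note that the decomposition $A_{m+1}=\partial A_m\oplus\langle h^{m+1}\rangle$ forces the error term into $\partial A_m$ automatically once one checks that $h^1\circ_1 h^m$ has the correct weight $m+3$ and its image in $A_{m+1}/\partial A_m$ is a nonzero multiple of $h^{m+1}$ — the latter reducing to a single scalar (the double-contraction coefficient) being nonzero.
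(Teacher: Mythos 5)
Your proposal is correct and follows essentially the same route as the paper, which simply records the computation $h^1\circ_1 h^k = -(k+3)h^{k+1}+2\partial h^k$ for $k\geq 1$ and declares the lemma immediate. Your additional observations — that weight and filtration considerations force $h^1\circ_1 h^k$ into $A_{k+1}=\partial A_k\oplus\langle h^{k+1}\rangle$, so only the nonvanishing of the leading coefficient needs checking — correctly fill in why the asserted identity suffices.
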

\begin{proof} This is immediate from the following calculation.
$$h^1 \circ_1 h^k = -(k+3) h^{k+1}+2 \partial h^k,\qquad k\geq 1.$$
\end{proof}

There exists a vertex algebra $\cN_n$ which is freely generated by $\{H^k|\ k\geq 0\}$ with same OPE relations as $\{h^k|\ k\geq 0\}$, such that $\cA(n)^{GL(n)}$ is a quotient of $\cN_n$ by an ideal $\cJ_n$ under a map $$\pi_n: \cN_n\ra \cA(n)^{GL(n)},\qquad H^k \ra h^k.$$ We have an alternative strong generating set $\{\Gamma_{a,b}|\ a,b\geq 0\}$ for $\cN_n$ satisfying $\pi_n(\Gamma_{a,b}) = \gamma_{a,b}$. There is a good increasing filtration on $\cN_n$ where $(\cN_n)_{(2k)}$ is spanned by iterated Wick products of $H^m$ and their derivatives of length at most $k$, and $(\cN_n)_{(2k+1)} = (\cN_n)_{(2k)}$, and $\pi_n$ preserves filtrations.

\begin{lemma} For each $I$ and $J$ as above, there exists a unique element $D_{I,J} \in (\cN_n)_{(2n+2)} \cap \cJ_n$ of weight $2n+2 + \sum_{a=0}^{n+1}(i_a + j_a)$, satisfying $\phi_{2n+2}(D_{I,J}) = d_{I,J}$. These elements generate $\cJ_n$ as a vertex algebra ideal.
\end{lemma}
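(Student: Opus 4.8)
The plan is to mimic the proof of Lemma \ref{ddef} verbatim, now for the pair $(\cN_n, \cJ_n)$ in place of $(\cM_n, \cI_n)$. First I would observe that $\pi_n$ maps each filtered piece $(\cN_n)_{(k)}$ onto $(\cA(n)^{GL(n)})_{(k)}$: this follows because the filtration on $\cN_n$ was defined by iterated Wick products of the $H^m$ and their derivatives, and $\pi_n$ sends $H^m$ to $h^m$, which strongly generate $\cA(n)^{GL(n)}$. Hence the hypotheses of Lemma \ref{idealrecon} are satisfied with $f = \pi_n$, and $\text{gr}(\pi_n)$ is the classical quotient map $\mathbb{C}[P_{a,b}] \to R$ whose kernel $J$ is, by the cited special case of Theorems 2.1 and 2.2 of \cite{SII}, generated as a $\partial$-ideal by the elements $d_{I,J}$ (where applying $\partial$ to $p_{a,b}$ raises the indices, matching the $\partial$-ring structure on $\bigwedge\bigoplus_j(V_j\oplus V_j^*)$ coming from $\partial e^i_k = e^i_{k+1}$, $\partial f^i_k = f^i_{k+1}$).

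Next I would invoke Lemma \ref{idealrecon} directly: it produces elements $D_{I,J} \in (\cN_n)_{(2n+2)} \cap \cJ_n$ with $\phi_{2n+2}(D_{I,J}) = d_{I,J}$ such that the collection $\{D_{I,J}\}$ generates $\cJ_n$ as a vertex algebra ideal. Here I use that $d_{I,J}$ is homogeneous of degree $n+1$ in the $p_{a,b}$, hence of degree $2n+2$ in the generators $e^i, f^i$, so it lands in $(\cN_n)_{(2n+2)}$; and its weight is $2n+2 + \sum_{a=0}^{n+1}(i_a+j_a)$ since $\text{wt}(\gamma_{a,b}) = a+b+2$ and $d_{I,J}$ is built from the $p_{i_r,j_s}$ with index pairs drawn from $I$ and $J$ (note the sum runs over $n+1$ quadratic factors, each contributing weight $2$, giving $2(n+1)$, plus $\sum(i_a+j_a)$).

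For uniqueness, suppose $D'_{I,J}$ also lies in $(\cN_n)_{(2n+2)} \cap \cJ_n$ with $\phi_{2n+2}(D'_{I,J}) = d_{I,J}$. Then $D_{I,J} - D'_{I,J} \in (\cN_n)_{(2n)} \cap \cJ_n$, so $\phi$ of it at the appropriate level maps into $J = \ker(\text{gr}(\pi_n))$ in degree at most $2n$. Since $J$ is generated in degree $2n+2$, it has no nonzero elements of degree $\le 2n$; equivalently, there are no normally ordered relations in $\cA(n)^{GL(n)}$ of degree less than $2n+2$. By descending induction on the filtration degree, $D_{I,J} - D'_{I,J} = 0$.

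The proof is entirely parallel to Lemma \ref{ddef}, so there is no real obstacle; the only thing requiring minor care is bookkeeping the weight and degree of $d_{I,J}$ and confirming that the $GL(n)$ second fundamental theorem in the exterior (odd) setting — the special case of \cite{SII} already cited in the text — indeed gives that $J$ is generated as a \emph{$\partial$-ideal} (not merely as an ideal) by the $d_{I,J}$, which is exactly the statement needed to feed Lemma \ref{idealrecon}. I would close with one sentence noting that, just as for $\cM_n$, there is a distinguished generator $D_0 = D_{I,J}$ for $I = J = (0,\dots,0)$ of minimal weight $2n+2$, which will be analyzed by a recursion analogous to Theorem \ref{recformula} in the next subsection.

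\begin{proof}
As in Lemma \ref{ddef}, $\pi_n$ maps each $(\cN_n)_{(k)}$ onto $(\cA(n)^{GL(n)})_{(k)}$, since the filtration on $\cN_n$ is by length of iterated Wick products in the $H^m$ and their derivatives, and the $h^m = \pi_n(H^m)$ strongly generate $\cA(n)^{GL(n)}$. Under the identifications $\text{gr}(\cN_n) \cong \mathbb{C}[P_{a,b}]$ and $\text{gr}(\cA(n)^{GL(n)}) \cong R \cong \mathbb{C}[p_{a,b}]/J$, the map $\text{gr}(\pi_n)$ is the classical quotient, whose kernel $J$ is, by the cited special case of Theorems 2.1 and 2.2 of \cite{SII}, generated as a $\partial$-ideal by the elements $d_{I,J}$. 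Each $d_{I,J}$ is homogeneous of degree $n+1$ in the $p_{a,b}$, hence degree $2n+2$ in the original generators, and of weight $2n+2 + \sum_{a=0}^{n+1}(i_a+j_a)$ since $\text{wt}(\gamma_{a,b}) = a+b+2$. Applying Lemma \ref{idealrecon}, we obtain elements $D_{I,J} \in (\cN_n)_{(2n+2)} \cap \cJ_n$ with $\phi_{2n+2}(D_{I,J}) = d_{I,J}$, such that $\{D_{I,J}\}$ generates $\cJ_n$ as a vertex algebra ideal. For uniqueness, if $D'_{I,J}$ satisfies the same conditions, then $D_{I,J} - D'_{I,J} \in (\cN_n)_{(2n)} \cap \cJ_n$; but $J$ is generated in degree $2n+2$, so there are no nonzero relations in $\cA(n)^{GL(n)}$ of degree less than $2n+2$, and hence $D_{I,J} - D'_{I,J} = 0$.
\end{proof}
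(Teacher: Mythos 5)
Your proof is correct and is exactly the argument the paper intends: the lemma is stated without proof precisely because it is the verbatim analogue of Lemma \ref{ddef}, and you have carried out that analogue faithfully (surjectivity of $\pi_n$ on filtered pieces, Sergeev's second fundamental theorem identifying the kernel of $\mathrm{gr}(\pi_n)$, Lemma \ref{idealrecon} for existence and generation, and the absence of relations below degree $2n+2$ for uniqueness). The only blemish is inherited from the paper's own statement: since $I=(i_0,\dots,i_n)$ and $J=(j_0,\dots,j_n)$ each have $n+1$ entries, the weight sum should run $\sum_{a=0}^{n}(i_a+j_a)$ rather than $\sum_{a=0}^{n+1}$.
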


Each $D_{I,J}$ can be written in the form \begin{equation} \label{decompdij} D_{I,J} = \sum_{k=1}^{n+1} D^{2k}_{I,J},\end{equation} where $D^{2k}_{I,J}$ is a normally ordered polynomial of degree $k$ in the generators $\Gamma_{a,b}$. The term $D^2_{I,J}$ lies in the space $A_m$ for $m= 2n+ \sum_{a=0}^n (i_a+j_a)$. We have the projection $\text{pr}_m: A_m \ra \langle H^m \rangle $, and we define the remainder $$R_{I,J} = \text{pr}_m(D^2_{I,J}).$$ It is independent of the choice of decomposition \eqref{decompdij}.

The element of $\cJ_n$ of minimal weight corresponds to $I = (0,\dots, 0) = J$, and has weight $2n+2$. We denote this element by $D_0$ and we denote its remainder by $R_0$. The condition $R_0 \neq 0$ is equivalent to the existence of a decoupling relation
\begin{equation} \label{lowestrel} h^{2n} = P(h^0, h^1,\dots, h^{2n-1}),\end{equation} where $P$ is a normally ordered polynomial in $h^0, h^1,\dots, h^{2n-1}$ and their derivatives. From this relation it is easy to construct decoupling relations $h^m = P_m(h^0, h^1, \dots, h^{2n-1})$ for all $m>2n$, so the condition $R_0 \neq 0$ implies that $\{h^0,h^1,\dots, h^{2n-1}\}$ is a minimal strong generating set for $\cA(n)^{GL(n)}$.

To prove this, we need to analyze the quantum corrections of $D_{I,J}$. Write $$R_{I,J} = R_n(I,J) H^m,\qquad m = 2n+\sum_{a=0}^n (i_a + j_a),$$ so that $R_n(I,J)$ denotes the coefficient of $H^m$ in $\text{pr}_m(D^2_{I,J})$. For $n=1$ and $I = (i_0, i_1)$, $J = (j_0, j_1)$ we have \begin{equation} \label{casenisone} R_1(I,J) = (-1)^{1+j_0 + j_1} \bigg(\frac{1}{2 + i_0 + j_0}  +  \frac{1}{ 2 + i_1 + j_0}  + \frac{1}{2 + i_0 + j_1} + \frac{1}{2 + i_1 + j_1} \bigg).\end{equation} Using the same method as the previous section, one can show that $R_n(I,J)$ satisfies the following recursive formula.

\begin{equation} \label{recursiongln}
R_n(I,J) =  \sum_{r=0}^n \bigg((-1)^{j_0} \bigg(\sum_k \frac{R_{n-1}(I_{r,k}, J')}{2+ i_k + j_0}\bigg) +  (-1)^{i_r}\sum_l \bigg(\frac{R_{n-1}(I_r, J'_l)}{2+ j_l + i_r} \bigg) \bigg).\end{equation} 
In this notation, $I_r = (i_0,\dots, \hat{i_r}, \dots, i_n)$ is obtained from $I$ by omitting $i_r$. For $k=0,\dots, n$ and $k\neq r$, $I_{r,k}$ is obtained from $I_r$ by replacing the entry $i_k$ with $i_k+i_r+j_0+2$. Similarly, $J' = (j_1,\dots, j_n)$ is obtained from $J$ by omitting $j_0$, and for $l=1,\dots, n$, $J'_l$ is obtained from $J'$ by replacing $j_l$ with $j_l + i_r + j_0+2$.

Suppose that all entries of $I$ and $J$ are even. Then for each $R_{n-1}(K,L)$ appearing in \eqref{recursiongln}, all entries of $K$ and $L$ are even. It is immediate from \eqref{casenisone} that $R_1(I,J)\neq 0$, and by induction on $n$, it follows from \eqref{recursiongln} that $R_n(I, J)$ is nonzero whenever $I$ and $J$ consist of even numbers. Specializing to the case $I = (0,\dots,0) = J$, we see that $R_0 \neq 0$, as desired. Finally, this implies
\begin{thm} $\cA(n)^{GL(n)}$ has a minimal strong generating set $\{h^0,h^1,\dots, h^{2n-1}\}$, and is therefore of type $\cW(2,3,\dots, 2n+1)$. \end{thm}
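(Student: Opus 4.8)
The plan is to deduce the statement from the non-vanishing of $R_0$, exactly as Theorem~\ref{maincor} was deduced in the symplectic case. Three ingredients are needed: (i) $R_0 \neq 0$; (ii) the equivalence of $R_0 \neq 0$ with the weight-$(2n+2)$ decoupling relation \eqref{lowestrel}; and (iii) propagation of \eqref{lowestrel} to all higher weights.

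For (i), I would restrict the recursion \eqref{recursiongln}, together with its base case \eqref{casenisone}, to lists $I,J$ all of whose entries are even. Then every list $(I_{r,k},J')$ and $(I_r,J'_l)$ occurring on the right of \eqref{recursiongln} again has only even entries, the prefactors $(-1)^{j_0}$ and $(-1)^{i_r}$ equal $+1$, and every denominator $2+i_k+j_0$, $2+j_l+i_r$ is strictly positive. Since \eqref{casenisone} gives $R_1(I,J) = -(\text{strictly positive sum}) < 0$ for even $I,J$, an induction on $n$ yields $R_n(I,J) < 0$ --- in particular $R_n(I,J)\neq 0$ --- for all even $I,J$; the key point is that every $R_{n-1}$-term feeding the recursion carries the same (negative) sign, so no cancellation can occur. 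Taking $I = J = (0,\dots,0)$ gives $R_0 \neq 0$.

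For (ii) and (iii) I would use the $GL(n)$-counterparts of Lemmas~\ref{lemnonzero} and~\ref{lemnonzeroii}, whose proofs are identical to the symplectic ones. Nonvanishing of $R_0$ --- the $\langle H^{2n}\rangle$-component of the unique weight-$(2n+2)$ relation $D_0 \in \cJ_n$ --- says precisely that $h^{2n}$ can be solved for, modulo the vertex ideal, from $D_0$ in terms of the lower generators $h^0,\dots,h^{2n-1}$ and their derivatives; since $\cJ_n$ has no element of weight $<2n+2$, this yields the decoupling relation \eqref{lowestrel}, $h^{2n}=P(h^0,\dots,h^{2n-1})$, in $\cA(n)^{GL(n)}$. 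Applying $h^1\circ_1(-)$ repeatedly, using $h^1\circ_1 h^k = -(k+3)h^{k+1}+2\partial h^k$ for $k\geq 1$, propagates this to decoupling relations $h^m=P_m(h^0,\dots,h^{2n-1})$ for every $m>2n$, by the same weight induction as in the proof of Lemma~\ref{lemnonzeroii}. Hence $\{h^0,h^1,\dots,h^{2n-1}\}$ strongly generates $\cA(n)^{GL(n)}$, and it is minimal: its members occupy the distinct weights $2,3,\dots,2n+1$, so expressing any one of them through the others and their derivatives would force a normally ordered relation of weight $\leq 2n+1 < 2n+2$, which does not exist. Since the minimal strong generators lie in weights $2,3,\dots,2n+1$, $\cA(n)^{GL(n)}$ is of type $\cW(2,3,\dots,2n+1)$.

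The real work is step (i): steps (ii) and (iii) merely transcribe the symplectic arguments, whereas (i) requires the sign bookkeeping in \eqref{recursiongln} that excludes cancellation and thereby guarantees $R_0\neq 0$.
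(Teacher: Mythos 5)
Your proposal is correct and follows the paper's own route: the key step is exactly the paper's observation that restricting the recursion \eqref{recursiongln} to lists with even entries keeps all entries even, makes every coefficient positive, and hence (starting from \eqref{casenisone}) forces $R_n(I,J)\neq 0$ with no cancellation, giving $R_0\neq 0$; the decoupling and propagation steps are then the $GL(n)$ transcriptions of Lemmas \ref{lemnonzero} and \ref{lemnonzeroii}, as in the paper. Your explicit sign bookkeeping ($R_n(I,J)<0$ for even lists) just makes the paper's implicit induction precise.
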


An immediate consequence is
\begin{thm} For all $m\geq 1$, $\cA(mn)^{GL(n)}$ has a minimal strong generating set $$\gamma^{j,k}_{0,l} = \sum_{i=1}^n :e^{i,j} \partial^l f^{i,k}:, \qquad 1\leq j\leq k\leq m,\qquad 0\leq l \leq 2n-1.$$ 
\end{thm}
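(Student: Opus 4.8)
The plan is to mimic the argument of Theorem \ref{mn} in the symplectic case, transporting the single-copy result $\cA(n)^{GL(n)}$ is of type $\cW(2,3,\dots,2n+1)$ to the multi-copy algebra $\cA(mn)^{GL(n)}$. First I would set up the notation: denote the generators of $\cA(mn)$ by $e^{i,j},f^{i,j}$ with $1\leq i\leq n$, $1\leq j\leq m$, so that for each fixed $j$ the set $\{e^{i,j}\}$ spans a copy of $\mathbb{C}^n$ and $\{f^{i,j}\}$ a copy of $(\mathbb{C}^n)^*$ under $GL(n)$. By the odd analogue of Weyl's first fundamental theorem for $GL(n)$ acting on many copies of $\mathbb{C}^n\oplus(\mathbb{C}^n)^*$, the classical invariant ring $\text{gr}(\cA(mn)^{GL(n)})$ is generated by the quadratics corresponding to $\gamma^{j,k}_{a,b}=\sum_{i=1}^n:\partial^a e^{i,j}\partial^b f^{i,k}:$ for all $1\leq j,k\leq m$ and $a,b\geq 0$; by Lemma \ref{reconlem} these strongly generate $\cA(mn)^{GL(n)}$. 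A degree count on the associated graded $\partial$-ring then shows that the smaller set $\{\gamma^{j,j}_{0,l}:1\leq j\leq m,\ l\geq 0\}\cup\{\gamma^{j,k}_{0,l}:1\leq j<k\leq m,\ l\geq 0\}\cup\{\gamma^{k,j}_{0,l}:1\leq j<k\leq m,\ l\geq 0\}$ already strongly generates; and since $\gamma^{j,k}_{a,b}$ and $\gamma^{k,j}_{b,a}$ have the same image in the associated graded, one can further drop to $j\leq k$, i.e.\ to the set in the statement together with all $l\geq 0$.

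The heart of the proof is then producing decoupling relations that eliminate $\gamma^{j,k}_{0,l}$ for $l\geq 2n$. For the diagonal copies this is immediate: restricting to the subalgebra generated by $\{e^{i,j},f^{i,j}\}$ for fixed $j$ gives a copy of $\cA(n)$, and its $GL(n)$-orbifold is of type $\cW(2,3,\dots,2n+1)$ by the preceding theorem, so we have relations $\gamma^{j,j}_{0,r}=P_r(\gamma^{j,j}_{0,0},\dots,\gamma^{j,j}_{0,2n-1})$ for all $r\geq 2n$. For the off-diagonal generators with $j<k$, I would apply the operator $\gamma^{j,k}_{0,0}\circ_1$ repeatedly to the diagonal decoupling relation for $\gamma^{j,j}_{0,r}$ (and to its derivatives). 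The key computation is an OPE identity of the shape
\begin{equation}\nonumber
\gamma^{j,k}_{0,0}\circ_1\gamma^{j,j}_{0,r}=c_r\,\gamma^{j,k}_{0,r}+\partial(\cdots),\qquad c_r\neq 0,
\end{equation}
where the correction term $\partial(\cdots)$ is a linear combination of $\partial^{r-t}\gamma^{j,k}_{0,t}$ for $t<r$ and $c_r$ is an explicit nonzero rational number (one expects $c_r=-(r+1)$ or similar, exactly as in the proof of Theorem \ref{mn}). Applying this to the relation for $\gamma^{j,j}_{0,r}$ with $r\geq 2n$ expresses $\gamma^{j,k}_{0,r}$ as a normally ordered polynomial in the diagonal generators $\gamma^{j,j}_{0,0},\dots,\gamma^{j,j}_{0,2n-2}$ together with $\gamma^{j,k}_{0,0},\dots,\gamma^{j,k}_{0,2n-1}$ and their derivatives — and these all lie in the proposed minimal generating set. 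Handling the odd index $r$ requires the analogous computation with $\gamma^{j,k}_{0,0}\circ_1\partial\gamma^{j,j}_{0,r}$. Running over all $j<k$ establishes that the set in the statement strongly generates $\cA(mn)^{GL(n)}$; minimality follows because, as in the single-copy case, there are no normally ordered relations of weight less than $2n+2$, so no generator of weight $\leq 2n+1$ can be removed.

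The main obstacle I anticipate is verifying the precise OPE identity $\gamma^{j,k}_{0,0}\circ_1\gamma^{j,j}_{0,r}=c_r\,\gamma^{j,k}_{0,r}+\partial(\cdots)$ with $c_r\neq 0$, and its odd-index counterpart. This is a direct but somewhat delicate Wick-calculus computation: one has $\gamma^{j,k}_{0,0}=\sum_i:e^{i,j}f^{i,k}:$ and $\gamma^{j,j}_{0,r}=\sum_{i'}:e^{i',j}\partial^r f^{i',j}:$, and the circle product $\circ_1$ picks up double contractions; since $e^{i,j}(z)f^{i',k}(w)\sim\delta_{i,i'}\delta_{j,k}(z-w)^{-2}$, only the contraction of $f^{i,k}$ from the first factor with $e^{i',j}$ from the second survives (using $j<k$, so $e^{i,j}$ does not contract with $f^{i',j}$ wait — it does, within the diagonal copy), so one must carefully track which contractions contribute. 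The upshot should be that the coefficient of $\gamma^{j,k}_{0,r}$ is a nonzero constant and the remainder is a total-derivative combination of lower-index $\gamma^{j,k}$; the nonvanishing of $c_r$ is what makes the inductive elimination work, exactly paralleling the calculations already carried out for $\cA(n)^{GL(n)}$ and in the proof of Theorem \ref{mn}, so no genuinely new phenomenon is expected — only bookkeeping.
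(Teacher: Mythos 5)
Your overall strategy is the one the paper intends (the paper's own proof is omitted and simply declared ``similar to the proof of Theorem \ref{mn}''): obtain strong generators from the odd analogue of Weyl's theorem for $GL(n)$, cut down to the $\gamma^{j,k}_{0,l}$ by the usual count $A_w=\partial A_{w-1}\oplus\langle\,\cdot\,\rangle$, import the diagonal decoupling relations from the single-copy result, and propagate them to the off-diagonal generators by applying $\gamma^{j,k}_{0,0}\,\circ_1$. That skeleton is sound, and your anticipated OPE identity with a nonzero leading coefficient is exactly the needed computation.

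There is, however, one genuinely false step: the claim that $\gamma^{j,k}_{a,b}$ and $\gamma^{k,j}_{b,a}$ have the same image in $\text{gr}(\cA(mn))$, which you use to restrict the index set to $j\leq k$. For $j\neq k$ the images are $\sum_i x^{i,j}_a y^{i,k}_b$ and $\sum_i x^{i,k}_b y^{i,j}_a$ (where $x^{i,j}_a$, $y^{i,k}_b$ denote the images of $\partial^a e^{i,j}$, $\partial^b f^{i,k}$); these involve disjoint sets of odd variables and are linearly independent in the exterior algebra. The symmetry you are invoking is a feature of the symplectic case only, where $\omega^{j,k}_{a,b}$ is symmetrized by construction and the $Sp(2n)$-invariants are indexed by \emph{unordered} pairs of copies of $\mathbb{C}^{2n}$; the $GL(n)$-invariants pair a copy of $V$ with a copy of $V^*$ and are indexed by \emph{ordered} pairs. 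Concretely, for $m=2$, $n=1$ the weight-$2$ subspace of $\cA(2)^{GL(1)}$ is spanned by the four elements $:e^{1,j}f^{1,k}:$ with $j,k\in\{1,2\}$, while the set with $j\leq k$ supplies only three weight-$2$ vectors, so it cannot strongly generate. The correct index set runs over all ordered pairs $1\leq j,k\leq m$ (the restriction $j\leq k$ in the printed statement appears to be a typo, which your argument should not try to ``derive''), and the off-diagonal decoupling must be carried out for both $\gamma^{j,k}_{0,r}$ and $\gamma^{k,j}_{0,r}$, the latter by applying $\gamma^{k,j}_{0,0}\,\circ_1$ to the diagonal relation. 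With that correction the remainder of your argument, including the minimality claim, goes through as in Theorem \ref{mn}.
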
 
\begin{proof} The argument is similar to the proof of Theorem \ref{mn}, and is omitted.\end{proof}

\section{Character decompositions}\label{sec:ch}
A general result of Kac and Radul (see Section 1 of \cite{KR}) states that for an associative algebra $A$ and a Lie algebra $\gg$, every $(\gg, A)$-module $V$ with the properties that it is an irreducible $A$-module and a direct sum of a countable number of finite-dimensional irreducible $\gg$-modules
decomposes as
\[
V \cong \bigoplus_E \left( E\otimes V^E\right), 
\]
where the sum is over all irreducible $\gg$-modules $E$ and $V^E$ is an irreducible $A^\gg$-module. 
Kac and Radul then remark that the same statement is true if we replace $\gg$ by a group $G$ (see also \cite{DLMI} for similar results). The symplectic fermion algebra $\cA(mn)$ is simple and graded by conformal weight. Each graded subspace is finite-dimensional and $G$-invariant for any reductive $G\subset Sp(2mn)$. Hence the assumptions of \cite{KR} apply, so that 
\[
\cA(mn) \cong \bigoplus_E \left( E \otimes \cA(mn)^E\right), 
\]
where the sum is over all finite-dimensional irreducible representations of $G$, and $\cA(mn)^E$ is an irreducible representation of $\cA(mn)^G$. The purpose of this section is to find the characters of the $\cA(mn)^E$ for $G=Sp(2n)\times SO(n)$ and also for $G=GL(m)\times GL(n)$. We will use the denominator identities of finite-dimensional classical Lie superalgebras to solve this problem. We need some notation and results on Lie superalgebras. For this, we
use the book \cite{CW}, and the article \cite{KWII}.

Define the lattice 
\[
L_{m,n} := \delta_1 \ZZ\oplus\dots\oplus \delta_m \ZZ \oplus \epsilon_1\ZZ\oplus\dots\oplus\epsilon_n\ZZ
\]
of signature $(m, n)$, where the bilinear product is defined by
\[
\left( \delta_i, \delta_j\right) = \delta_{i, j},\qquad
\left( \epsilon_a, \epsilon_b\right) = -\delta_{a, b},\qquad
\left( \delta_i, \epsilon_b\right) = 0,
\]
for all $1\leq i, j \leq m$ and $1\leq a, b \leq n$. 
We restrict to the case $m>n$.
The root systems of various Lie superalgebras can be constructed as subsets of $L_{m, n}$.
We are interested in three examples. 
\begin{example} \label{ex:gl}
Let $\gg=\gg\gl\left(m|n\right)$. 
Then the root system $\Delta$ is the disjoint union of even and odd roots, $\Delta= \Delta_0\cup\Delta_1$, where 
\begin{equation}\nonumber
\begin{split}
\Delta_0 &= \left\{ \delta_i-\delta_j,  \ \epsilon_a-\epsilon_b\ | \ 1\leq i, j \leq m, \ 1\leq a, b \leq n\ \right\}\\
\Delta_1 &= \left\{ \pm\left(\delta_i-\epsilon_a\right)\ | \ 1\leq i \leq m, \ 1\leq a \leq n\ \right\}.
\end{split}
\end{equation}
A Lie superalgebra usually allows for inequivalent choices of positive roots and positive simple roots. We are interested in a choice of simple positive roots that has as many
odd isotropic roots as possible. This is if $m\leq n$
\begin{equation}\nonumber
\Pi = \left\{ \left(\epsilon_a-\delta_a\right), \ \delta_b-\epsilon_{b+1}\ |  1\leq a, b \leq m\ \right\}
\cup \left\{\epsilon_{i}-\epsilon_{i+1} \ | m+1\leq i\leq n\ \right\}
\end{equation}
so that positive roots are
\begin{equation}\nonumber
\begin{split}
\Delta^+ &= \Delta^+_0 \cup \Delta^+_1 \\
\Delta_0^+ &= \left\{ \delta_i-\delta_j,  \ \epsilon_a-\epsilon_b\ | \ 1\leq i < j \leq m, \ 1\leq a <b \leq n\ \right\}\\
\Delta^+_1 &=  \left\{ \epsilon_a-\delta_i,  \delta_j-\epsilon_b\ |\  1\leq a \leq n, \,  a\leq i \leq m, \, 1\leq j \leq m, \, j < b \leq n \ \right\}
\end{split}
\end{equation}
and 
\begin{equation}\label{eq:isotropic}
S = \left\{  \left(\epsilon_a-\delta_a\right)\ | \  1\leq a \leq m\ \right\} \subset \Pi
\end{equation}
is a maximal isotropic subset of simple positive odd roots. The case $m>n$ is analogous. 
For every positive root, define the Weyl reflection
\[
r_\alpha(x) := x-2\frac{\left(x, \alpha\right)}{\left(\alpha, \alpha\right)}\alpha.
\]
The Weyl group $W$ is then the group generated by the $r_\alpha$ for $\alpha$ in $\Delta_0^+$. 
Finally, let $W^\sharp$ be the subgroup of the Weyl group generated by the roots of $\gg\gl(m)$ if $m>n$, and otherwise the subgroup generated by 
the roots of $\gg\gl(n)$. 
\end{example}
\begin{example} 
Let $\gg=\gs\gp\go\left(2m|2n+1\right)$. 
Then the odd and even roots of the root system $\Delta = \Delta_0 \cup \Delta_1$ are
\begin{equation}\nonumber
\begin{split}
\Delta_0 &= \left\{ \pm\delta_i\pm\delta_j,  \ \pm\epsilon_a\pm \epsilon_b,\ \pm2\delta_p, \ \pm\epsilon_q  \ \right\},\qquad
\Delta_1 = \left\{ \pm\delta_p\pm\epsilon_q,\  \pm \delta_p\ \right\},
\end{split}
\end{equation}
where  $1\leq i < j \leq m, \ 1\leq a < b \leq n,\ 1\leq p\leq m, \ 1\leq q\leq n$.
A choice of simple positive roots that has as many
odd isotropic roots as possible is 
\begin{equation}\nonumber
\Pi = \left\{ (-1)^a \left(\epsilon_a-\delta_a\right),  \epsilon_b-\epsilon_{b+1},  \delta_i-\delta_{i+1}, \delta_m\, |  1\leq a, b \leq m,  b\, \text{odd}, 1\leq i\leq n,  i \, \ \text{even} \right\}
\end{equation}
so that 
positive roots $\Delta^+ = \Delta^+_0 \cup \Delta^+_1$ split into even and odd roots as 
\begin{equation}\nonumber
\begin{split}
\Delta_0^+ &= \left\{ \delta_i\pm\delta_j,  \ \epsilon_a\pm\epsilon_b,  \ 2\delta_p,\ \epsilon_q\  \right\}\\
\Delta^+_1 &=  \left\{ \epsilon_a\pm\delta_i,  \delta_j\pm\epsilon_b,\ \delta_p\  |\  1\leq a \leq n, \,  a\leq i \leq m, \, a\ \text{even}, \, 1\leq j \leq m, \, j\leq b \leq n, \, j\ \text{odd}\, \right\}
\end{split}
\end{equation}
where  $1\leq i < j \leq m, \ 1\leq a < b \leq n,\ 1\leq p\leq m, \ 1\leq q\leq n$ unless otherwise indicated. Then \eqref{eq:isotropic}
is a maximal isotropic subset of simple positive odd roots. The Weyl group $W$ is again the Weyl group of the even subalgebra. If $m>n$, $W^\sharp$ is defined to be the Weyl group of $\gs\gp(2m)$, and otherwise it is the Weyl group of $\gs\go(2n+1)$. 
\end{example}
\begin{example}\label{ex:spo} 
Let $\gg=\gs\gp\go\left(2m|2n\right)$. 
Then the root system $\Delta$ is the disjoint union of
\begin{equation}\nonumber 
\begin{split}
\Delta_0 &= \left\{ \pm\delta_i\pm\delta_j,  \ \pm\epsilon_a\pm \epsilon_b,\ \pm2\delta_p  \ \right\},\qquad
\Delta_1 = \left\{ \pm\delta_p\pm\epsilon_q\ \right\},
\end{split}
\end{equation}
where  $1\leq i < j \leq m, \ 1\leq a < b \leq n,\ 1\leq p\leq m, \ 1\leq q\leq n$.
In this case, a choice of simple positive roots that has as many
odd isotropic roots as possible is 
\begin{equation}\nonumber
\Pi = \left\{ (-1)^a \left(\epsilon_a-\delta_a\right),  \epsilon_b-\epsilon_{b+1},  \delta_i-\delta_{i+1}\, |  1\leq a, b \leq m,  b\, \text{odd}, 1\leq i\leq n,  i \, \text{even} \right\}
\end{equation}
so that 
positive roots are the disjoint union of 
\begin{equation}\nonumber
\begin{split}
\Delta_0^+ &= \left\{ \delta_i\pm\delta_j,  \ \epsilon_a\pm\epsilon_b,  \ 2\delta_p\  \right\}\\
\Delta^+_1 &=  \left\{ \epsilon_a\pm\delta_i,  \delta_j\pm\epsilon_b\ |\  1\leq a \leq n, \,  a\leq i \leq m, \, a\ \text{even}, \, 1\leq j \leq m, \, j\leq b \leq n, \, j\ \text{odd}\, \right\}
\end{split}
\end{equation}
where  $1\leq i < j \leq m, \ 1\leq a < b \leq n,\ 1\leq p\leq m, \ 1\leq q\leq n$ unless otherwise indicated. Then again \eqref{eq:isotropic}
is a maximal isotropic subset of simple positive odd roots. The Weyl group $W$ is as before the Weyl group of the even subalgebra. For $m>n$, $W^\sharp$ is the Weyl group of $\gs\gp(2m)$, and otherwise it is the Weyl group of $\gs\go(2n)$. 
\end{example}
The Weyl vector of a Lie superalgebra is defined to be $\rho:=\rho_0-\rho_1$
with even and odd Weyl vectors 
\[
\rho_0 = \frac{1}{2} \sum_{\alpha \in \Delta_0^+} \alpha,\qquad \rho_1 = \frac{1}{2} \sum_{\alpha \in \Delta_1^+} \alpha.
\]
We denote the root lattice of $\gg$ by $L$. Note that this lattice is spanned by the odd roots in our examples. 
In order to state the main result of this section, we need to define for every $\lambda$ in $L$ the set
\[
I_\lambda := \Big\{\ \left(\left(n_\alpha\right)_{\alpha\in\Delta^+_1\setminus S}, \left( m_\beta\right)_{\beta \in S}\right) \in \ZZ^{|\Delta^+_1|} \ \Big| \
\sum\limits_{\alpha\in \Delta^+_1\setminus S} \ \sum\limits_{\beta\in S} n_\alpha\alpha+m_\beta\beta =\lambda\ \Big\}
\]
as well as the partial theta function
\[
P_n(q):= q^{\frac{1}{2}\left(n+\frac{1}{2}\right)^2}\sum\limits_{m=0}^\infty (-1)^{m} q^{\frac{1}{2}\left(m^2-2m\left(n+\frac{1}{2}\right)\right)}.
\]
\begin{thm}\label{thm:ch}
Let $\gg_0=\gg\gl(m)\oplus \gg\gl(r)$ or $\gg_0=\gs\gp(2m)\oplus\gs\go(r)$ be the even subalgebra of
$\gg=\gg\gl(m|r)$, respectively $\gg=\gs\gp\go(2m|r)$. 
Let $P^+$ be the set of dominant weights of the even subalgebra $\gg_0$. 
Then the graded character of $\cA(mr)$ decomposes as 
\[
\ch[\cA(mr)] = \sum_{\Lambda\in L\cap P^+} \text{ch}_{\Lambda}  B_\Lambda.
\]
Here $\text{ch}_{\Lambda}$ is the irreducible highest-weight representation of $\gg_0$ of highest-weight $\Lambda$.
The branching function is
\[
B_\Lambda = \frac{1}{\eta(q)^{|\Delta^+_1|}}\frac{|W^\sharp|}{|W|}\sum_{\omega\in W}\epsilon(w) \sum_{\left((n_\alpha),(m_\beta)\right) \in I_{\omega(\Lambda+\rho_0)-\rho_0}}  q^{\frac{1}{2}\left(n_\alpha+\frac{1}{2}\right)^2} P_{m_\beta}(q)
\]
with the Dedekind eta function 
$\eta(q)=q^{\frac{1}{24}}\prod\limits_{n=1}^\infty (1-q^n)$.
\end{thm}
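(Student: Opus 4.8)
The plan is to extract the formula for $B_\Lambda$ from the product expansion of $\ch[\cA(mr)]$ by running it through the super-denominator identity of $\gg$. The existence of \emph{some} decomposition $\ch[\cA(mr)]=\sum_\Lambda\text{ch}_\Lambda\,B_\Lambda$ with $B_\Lambda$ a well-defined $q$-series is automatic: $\cA(mr)$ is a module over $\gg_0$, its conformal-weight graded pieces are finite dimensional and hence completely reducible, and $B_\Lambda$ is by definition the graded dimension of the multiplicity space of the irreducible $\gg_0$-module of highest weight $\Lambda$. So the entire content of the theorem is the closed form of $B_\Lambda$, together with the assertion that only $\Lambda\in L\cap P^+$ occur.

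I would begin from the theta-function form of the graded character assembled in the introduction,
\[
\ch[\cA(mr)] \;=\; e^{-\rho_1}\prod_{\alpha\in\Delta_1^+}\frac{\vartheta(e^\alpha;q)}{(1+e^{-\alpha})\,\eta(q)},
\]
and multiply through by the Weyl denominator $D_0=e^{\rho_0}\prod_{\alpha\in\Delta_0^+}(1-e^{-\alpha})=\sum_{w\in W}\epsilon(w)e^{w\rho_0}$ of $\gg_0$. Writing $\rho=\rho_0-\rho_1$, the product $D_0\cdot\ch[\cA(mr)]$ collapses to $\eta(q)^{-|\Delta_1^+|}\,\check R\,\prod_{\alpha\in\Delta_1^+}\vartheta(e^\alpha;q)$, where $\check R=e^\rho\prod_{\alpha\in\Delta_0^+}(1-e^{-\alpha})\big/\prod_{\alpha\in\Delta_1^+}(1+e^{-\alpha})$ is the super Weyl denominator of $\gg$. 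Next I would apply the Kac--Wakimoto super-denominator identity for $\gg$, for the distinguished positive system and the maximal isotropic set $S$ exhibited in Examples~\ref{ex:gl}--\ref{ex:spo}, which rewrites $\check R=\frac{|W^\sharp|}{|W|}\sum_{w\in W}\epsilon(w)\,w\!\big(e^\rho/\prod_{\beta\in S}(1+e^{-\beta})\big)$. Since $\vartheta(z^{-1};q)=\vartheta(z;q)$ and $W$ permutes the unordered pairs $\{\alpha,-\alpha\}$ of odd roots, the factor $\prod_{\alpha\in\Delta_1^+}\vartheta(e^\alpha;q)$ is $W$-invariant and can be moved inside the Weyl sum.

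Then I would expand the theta factors. Writing $e^\rho=e^{\rho_0}\prod_{\alpha\in\Delta_1^+}e^{-\alpha/2}$ distributes one half-root onto each $\vartheta(e^\alpha;q)$: a non-isotropic positive odd root $\alpha$ contributes $e^{-\alpha/2}\vartheta(e^\alpha;q)=\sum_{n_\alpha\in\ZZ}e^{n_\alpha\alpha}q^{(n_\alpha+1/2)^2/2}$, a full theta series, whereas an isotropic $\beta\in S$ contributes $e^{-\beta/2}\vartheta(e^\beta;q)/(1+e^{-\beta})=\sum_{m_\beta}e^{m_\beta\beta}P_{m_\beta}(q)$, a one-sided partial theta function, after expanding $(1+e^{-\beta})^{-1}$ as a geometric series and reindexing (with a bookkeeping shift absorbed into the index). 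Multiplying these out and grouping terms by the weight $\mu=\sum_\alpha n_\alpha\alpha+\sum_\beta m_\beta\beta$, which runs over the root lattice $L$ since $L$ is spanned by the odd roots, produces precisely the index sets $I_\mu$ and the coefficients $\sum_{((n_\alpha),(m_\beta))\in I_\mu}\prod_\alpha q^{(n_\alpha+1/2)^2/2}\prod_\beta P_{m_\beta}(q)$; these are well-defined formal $q$-series, up to an overall fractional power, because the $q$-valuation of the general summand grows like $\sum n_\alpha^2+\sum m_\beta^2$. One arrives at $X:=D_0\cdot\ch[\cA(mr)]=\sum_{\mu\in L}d_\mu(q)\sum_{w\in W}\epsilon(w)e^{w(\rho_0+\mu)}$, which is manifestly $W$-anti-invariant. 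Standard bookkeeping then closes the argument: $\sum_{w\in W}\epsilon(w)e^{w(\rho_0+\mu)}$ vanishes when $\rho_0+\mu$ lies on a wall and otherwise equals $\epsilon(v)\sum_{w\in W}\epsilon(w)e^{w(\Lambda+\rho_0)}$ for the unique $\gg_0$-dominant $\Lambda$ with $v(\rho_0+\mu)=\Lambda+\rho_0$; and since $w\rho_0-\rho_0$ always lies in the root lattice of $\gg_0$, which is contained in $L$, the constraint $\mu\in L$ forces $\Lambda\in L$. Regrouping by $\Lambda$, dividing by $D_0$, and using the Weyl character formula $\text{ch}_\Lambda=D_0^{-1}\sum_{w\in W}\epsilon(w)e^{w(\Lambda+\rho_0)}$ for $\gg_0$ yields $\ch[\cA(mr)]=\sum_{\Lambda\in L\cap P^+}\text{ch}_\Lambda\,B_\Lambda$ with $B_\Lambda$ exactly the stated expression.

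The step I expect to be the genuine obstacle is the one invoking the super-denominator identity. One has to pin down, for each of the three families $\gg\gl(m|r)$, $\gs\gp\go(2m|2n+1)$, $\gs\gp\go(2m|2n)$, the correct form of that identity---in particular the $|W^\sharp|/|W|$ normalisation and the role of the subgroup $W^\sharp$---and verify that the choices of simple roots $\Pi$ and of isotropic subsets $S$ recorded in Examples~\ref{ex:gl}--\ref{ex:spo} actually meet its hypotheses; and then one must carry out the expansion of the isotropic factors into the partial theta functions $P_{m_\beta}(q)$ with precisely the right index normalisation and signs, so that all the formal rearrangements above are legitimate. By comparison, the $W$-invariance of $\prod_{\alpha\in\Delta_1^+}\vartheta(e^\alpha;q)$, the anti-invariance bookkeeping, the convergence of the $I_\mu$-sums, and the reorganisation of terms into the sets $I_\mu$ are all routine.
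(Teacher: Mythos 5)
Your proposal follows essentially the same route as the paper: rewrite the character as a product of Jacobi theta functions over $\eta$'s, invoke the Gorelik/Kac--Wakimoto denominator identity for $\gg$ with the maximal isotropic set $S$, average over the full Weyl group with the factor $|W^\sharp|/|W|$, use $W$-invariance of $\prod_{\alpha\in\Delta_1^+}\vartheta(e^\alpha;q)$, expand and regroup by dominant weights of $\gg_0$, and collapse the geometric-series expansion of $(1+e^{-\beta})^{-1}$ for $\beta\in S$ into the partial theta functions $P_{m_\beta}(q)$. The only (cosmetic) difference is that you fold the isotropic factors into partial theta functions at the outset, whereas the paper first works with the intermediate index set $J_\lambda$ and performs the reindexing $\ell_\beta=n_\beta+m_\beta$ as the final step.
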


\begin{proof}
Observe that the generators $\left\{ e^i, f^i | i=1, \dots, mr \right\}$ of the symplectic fermion algebra $\cA(mr)$ carry the same representation of $\gg_0$ as the odd subalgebra of $\gg$. 
Hence we can write the graded character of $\cA(mr)$ as
\begin{equation}\nonumber
\ch[\cA(mr)]= q^{\frac{mr}{12}} \prod_{\alpha\in\Delta_1} \prod_{n=1}^\infty \left(1+e^{\alpha}q^n\right),
\end{equation}
where $\Delta_1$ is the odd root system of $\gg\in\{ \gs\gp\go(2m|r), \gg\gl(m|r)\}$. 
Using the odd Weyl vector, the character can be rewritten as 
\begin{equation}\nonumber
\begin{split}
\ch[\cA(mr)]&= e^{-\rho_1}q^{\frac{mr}{12}} \prod_{\alpha\in\Delta^+_1} \frac{e^{\frac{\alpha}{2}}}{\left(1+e^{-\alpha}\right)}\prod_{n=1}^\infty \left(1+e^{\alpha}q^n\right)\left(1+e^{-\alpha}q^{n-1}\right) 
= \frac{1}{e^{\rho_1}}\prod_{\alpha\in\Delta^+_1} \frac{\vartheta\left(e^\alpha ; q\right)}{\left(1+e^{-\alpha}\right)\eta(q)},
\end{split}
\end{equation}
where $\eta(q)$ is the Dedekind eta function and 
\begin{equation}\nonumber 
\vartheta(z; q) = \sum_{n\in\ZZ} z^{n+\frac{1}{2}}q^{\frac{1}{2}\left(n+\frac{1}{2}\right)^2}
\end{equation}
is a standard Jacobi theta function. 
The denominator identity of $\gg$ is \cite{G, KWII},
\begin{equation}\label{eq:denom}
\frac{e^{\rho_0}\prod\limits_{\alpha\in\Delta_0^+}(1-e^{-\alpha})}{e^{\rho_1}\prod\limits_{\alpha\in\Delta_1^+}(1+e^{-\alpha})} = \sum_{\omega\in W^\sharp} \epsilon(\omega) \omega\left(\frac{e^{\rho}}{\prod\limits_{\beta\in S}(1+e^{-\beta})} \right)
\end{equation}
where $W^\sharp$ is the Weyl group of the {\it larger} even subalgebra of $\gg$ as defined in Examples \ref{ex:gl}--\ref{ex:spo}. 
Here $\epsilon(\omega)$ denotes the signum of the Weyl reflection $\omega$. The set $S$ is the set of simple isotropic roots introduced in the examples. 
The left-hand side of \eqref{eq:denom} is invariant under $\epsilon(\omega)\omega$ for any reflection $\omega\in W$. Hence we can rewrite the denominator identity as
\[
\frac{e^{\rho_0}\prod\limits_{\alpha\in\Delta_0^+}(1-e^{-\alpha})}{e^{\rho_1}\prod\limits_{\alpha\in\Delta_1^+}(1+e^{-\alpha})} = \frac{|W^\sharp|}{|W|}\sum_{\omega\in W} \epsilon(\omega) \omega\left(\frac{e^{\rho}}{\prod\limits_{\beta\in S}(1+e^{-\beta})} \right).
\]
Observe that  $\prod\limits_{\alpha\in\Delta^+_1} \vartheta\left(e^\alpha ; q\right)$ is invariant under $W$, so we can rewrite the character as
\begin{equation}\nonumber
\ch[\cA(mr)] =  \frac{1}{\eta(q)^{|\Delta^+_1|}}\frac{|W^\sharp|}{|W|}\sum_{\alpha\in\Delta_1^+} \sum_{n_\alpha\in\ZZ}\sum_{\omega\in W} \epsilon(\omega) \omega\left(\frac{e^{\rho_0}e^{n_\alpha\alpha} q^{\frac{1}{2}\left(n_\alpha+\frac{1}{2}\right)^2}}{\prod\limits_{\beta\in S}(1+e^{-\beta})} \right)\left(e^{\rho_0}\prod\limits_{\alpha\in\Delta_0^+}(1-e^{-\alpha})\right)^{-1}.
\end{equation}
For $\lambda$ in $L$,  we define the set
\[
J_\lambda := \left\{\ \left(\left(n_\alpha\right)_{\alpha\in\Delta^+_1}, \left( m_\beta\right)_{\beta \in S}\right) \in \ZZ^{|\Delta^+_1|}\times \ZZ_{\geq 0}^{|S|} \ \Big| \
\sum_{\alpha\in \Delta^+_1} \sum_{\beta\in S} n_\alpha\alpha+m_\beta\beta =\lambda\ \right\}  
\]
and we also define
\[
\widetilde{\text{ch}}_{\lambda} := \frac{\sum\limits_{\omega\in W}\epsilon(\omega)  \omega\left(e^{\lambda+\rho_0}\right) }{e^{\rho_0}\prod\limits_{\alpha\in\Delta_0^+}(1-e^{-\alpha})}.
\]
Up to a sign this is the character $\text{ch}_\Lambda$ of the highest-weight representation of highest-weight $\Lambda$, where $\Lambda$ is the dominant weight
in the Weyl orbit of $\lambda$. Using this notation, the graded character becomes
\begin{equation}\nonumber
\begin{split}
\ch[\cA(mr)] &=  \frac{1}{\eta(q)^{|\Delta^+_1|}}\frac{|W^\sharp|}{|W|} \sum_{\lambda\in L}\widetilde{\text{ch}}_{\lambda} \sum_{\left((n_\alpha), (m_\beta)\right) \in J_\lambda} (-1)^{m_\beta} q^{\frac{1}{2}\left(n_\alpha+\frac{1}{2}\right)^2} \\
&=\frac{1}{\eta(q)^{|\Delta^+_1|}} \sum_{\Lambda\in L\cap P^+} \text{ch}_{\Lambda}  \sum_{\omega\in W} \frac{|W^\sharp|}{|W|}  \epsilon(w) \sum_{\left( (n_\alpha), (m_\beta)\right) \in J_{\omega(\Lambda+\rho_0)-\rho_0}} (-1)^{m_\beta} q^{\frac{1}{2}\left(n_\alpha+\frac{1}{2}\right)^2}
\end{split}
\end{equation}
where $P^+$ is the set of dominant weights of the even subalgebra of $\gg$. 
If $\alpha=\beta \in S$, then we can combine contributions as follows. Let $\ell_\beta=n_\beta+m_\beta$, then 
\[
\sum_{m_\beta=0}^\infty (-1)^{m_\beta} q^{\frac{1}{2}\left(n_\beta+\frac{1}{2}\right)^2}=
\sum_{m_\beta=0}^\infty (-1)^{m_\beta} q^{\frac{1}{2}\left(\ell_\beta-m_\beta+\frac{1}{2}\right)^2}=
q^{\frac{1}{2}\left(\ell_\beta+\frac{1}{2}\right)^2}\sum_{m_\beta=0}^\infty (-1)^{m_\beta} q^{\frac{1}{2}\left(m_\beta^2-2m_\beta\left(\ell_\beta+\frac{1}{2}\right)\right)}
\]
Inserting this terminates the proof. 
\end{proof}

\begin{cor}
For $G=Sp(2n)$, the branching function is
\[
B_\Lambda(q)=  \frac{1}{\eta(q)^n}\sum\limits_{\omega \in W}\epsilon(\omega)q^{\frac{1}{2}\left(\omega(\Lambda+\rho_0)-\rho, \omega(\Lambda+\rho_0)-\rho\right)}.
\]
\end{cor}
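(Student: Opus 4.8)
\noindent\emph{Sketch of proof.}
The plan is to deduce the Corollary from Theorem \ref{thm:ch} by specializing to the orthosymplectic Lie superalgebra $\gg=\gs\gp\go(2n|1)$. First I would check that this is the relevant specialization: the even subalgebra of $\gs\gp\go(2n|1)$ is $\gg_0=\gs\gp(2n)\oplus\gs\go(1)=\gs\gp(2n)$, and its odd part $\gg_1$, viewed as a $\gg_0$-module, is the standard representation $\C^{2n}$, which is exactly the $Sp(2n)$-module spanned by the generators $\{e^i,f^i\}$ of $\cA(n)$. Hence Theorem \ref{thm:ch}, applied with ``$m$''$=n$ and ``$r$''$=1$ so that $\cA(mr)=\cA(n)$ and $\gg_0$ is the Lie algebra of $G=Sp(2n)$, gives $\ch[\cA(n)]=\sum_{\Lambda\in L\cap P^+}\text{ch}_\Lambda\,B_\Lambda$ with $B_\Lambda$ the general branching function; it remains to simplify this expression for the present $\gg$.

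Next I would record the root data of $\gg=\gs\gp\go(2n|1)$. Its set of positive odd roots is $\Delta_1^+=\{\delta_1,\dots,\delta_n\}$, so $|\Delta_1^+|=n$ and the prefactor is $\eta(q)^{-n}$. Since $(\delta_p,\delta_p)=1\neq 0$, none of these roots is isotropic; therefore the maximal isotropic subset $S\subset\Pi$ of simple positive odd roots is empty, $\Delta_1^+\setminus S=\Delta_1^+$, and no partial theta function $P_{m_\beta}(q)$ occurs in $B_\Lambda$. Finally, $W$ and $W^\sharp$ both equal the Weyl group of $\gs\gp(2n)$ (the $\gs\go(1)$-factor being trivial and $n\geq 1$), so $|W^\sharp|/|W|=1$.

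I would then evaluate the remaining index sum. For $\lambda\in L=\bigoplus_{p=1}^n\ZZ\delta_p$ the equation $\sum_p n_p\delta_p=\lambda$ has a unique solution $(n_p)\in\ZZ^n$, since $\{\delta_p\}$ is a $\ZZ$-basis of $L$; thus $I_\lambda$ is a single point, contributing $q^{\frac12\sum_p(n_p+1/2)^2}$. For $\Lambda\in L\cap P^+$ and any $\omega\in W$ one has $\omega(\Lambda+\rho_0)-\rho_0=\omega(\Lambda)+(\omega(\rho_0)-\rho_0)\in L$, because $W$ acts by signed permutations of the $\delta_p$ and $\omega(\rho_0)-\rho_0$ lies in the $C_n$ root lattice $\subset L$, so every term of the $W$-sum is present. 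Since $\rho_1=\frac12\sum_{p=1}^n\delta_p$ and $\rho=\rho_0-\rho_1$, the unique solution for $\lambda=\omega(\Lambda+\rho_0)-\rho_0$ satisfies $\sum_p(n_p+\tfrac12)\delta_p=\lambda+\rho_1=\omega(\Lambda+\rho_0)-\rho$, and the normalization $(\delta_p,\delta_q)=\delta_{p,q}$ yields $\sum_p(n_p+\tfrac12)^2=(\omega(\Lambda+\rho_0)-\rho,\ \omega(\Lambda+\rho_0)-\rho)$. Substituting into $B_\Lambda$ produces the asserted formula.

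The argument is essentially bookkeeping; the one point requiring care is the correct reading of the degenerate data for $\gs\gp\go(2n|1)$, namely that the odd roots $\pm\delta_p$ are non-isotropic (so $S=\emptyset$ and all odd roots feed the theta part of the denominator identity \eqref{eq:denom} rather than a partial-theta part) and that $W^\sharp=W$. As a consistency check, setting $\Lambda=0$ expresses the graded character of $\cA(n)^{Sp(2n)}$ as $\eta(q)^{-n}$ times a Weyl-type alternating sum with no partial-theta contributions, in agreement with $\cA(n)^{Sp(2n)}$ being freely generated.
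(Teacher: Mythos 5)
Your proposal is correct and follows essentially the same route as the paper: specialize Theorem \ref{thm:ch} to $\gg=\gs\gp\go(2n|1)$, note that $|\Delta_1^+|=n$, $W^\sharp=W$, $S=\emptyset$ (so no partial theta factors), that $I_\lambda$ is a singleton, and convert $\sum_p(n_p+\tfrac12)^2$ into $(\lambda+\rho_1,\lambda+\rho_1)=(\omega(\Lambda+\rho_0)-\rho,\omega(\Lambda+\rho_0)-\rho)$. Your write-up is just a more detailed version of the paper's two-line argument, including the (correct) extra check that $\omega(\Lambda+\rho_0)-\rho_0\in L$ for every $\omega\in W$.
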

\begin{proof}
We have $|\Delta^+_1|=n$ and $W^\sharp=W$. The set $S$ is empty and the set $I_{\Lambda}$ ($\Lambda$ in $L$) is just the one element set $\{\Lambda\}$. For $\Lambda= m_1\delta_1+\dots +m_n \delta_n$, we have
\[
\sum_{i=1}^n \left(m_i+\frac{1}{2}\right)^2 = \left(\Lambda+\rho_1, \Lambda+\rho_1\right),
\]
so that the branching function simplifies as claimed. 
\end{proof}
\begin{cor}
$\cA(n)^{Sp(2n)}$ has graded character $$\ch[\cA(n)^{Sp(2n)}]  = q^{\frac{n}{12}} \sum_{m\geq 0}\text{dim}(\cA(n)^{Sp(2n)}[m]) q^m = q^{\frac{n}{12}} \prod_{i=1}^n \prod_{k\geq 0} \frac{1}{1-q^{2i +k}}.$$
\end{cor}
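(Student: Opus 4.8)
The plan is to specialize the preceding corollary to the trivial representation. Take $\gg = \gs\gp\go(2n|1)$, so $\gg_0 = \gs\gp(2n)$ and $\gg$ has $n$ positive odd roots $\delta_1,\dots,\delta_n$; the symplectic fermion algebra attached to this situation is $\cA(n)$ with $G = Sp(2n) \cong Sp(2n)\times SO(1)$, and $\cA(n)^{Sp(2n)}$ is exactly the summand of $\cA(n)$ indexed by the trivial $\gg_0$-module $\Lambda = 0$ (which lies in $L\cap P^+$ and occurs with multiplicity one). Hence $\ch[\cA(n)^{Sp(2n)}] = B_0(q)$, and by the previous corollary
\[
B_0(q) = \frac{1}{\eta(q)^n}\sum_{\omega\in W}\epsilon(\omega)\,q^{\frac12\left(\omega(\rho_0)-\rho,\ \omega(\rho_0)-\rho\right)},
\]
where $W = W(C_n)$ is the group of signed permutations, $\rho_0 = \sum_{i=1}^n(n-i+1)\delta_i$ is the Weyl vector of $\gs\gp(2n)$, and $\rho = \rho_0-\rho_1$ with $\rho_1 = \tfrac12\sum_{i=1}^n\delta_i$.

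Next I would expand $\left(\omega(\rho_0)-\rho,\,\omega(\rho_0)-\rho\right) = |\rho_0|^2 + |\rho|^2 - 2\left(\omega(\rho_0),\rho\right)$, so that the only $\omega$-dependence sits in the last term. Writing a signed permutation as $\omega(\rho_0)_i = \varepsilon_i\, a_{\sigma(i)}$ with $a_k = n-k+1$, $\sigma\in S_n$, $\varepsilon_i = \pm1$ and $\epsilon(\omega) = \operatorname{sgn}(\sigma)\prod_i\varepsilon_i$, the sum over $W$ factors over the $\varepsilon_i$ and collapses to a determinant,
\[
\sum_{\omega\in W}\epsilon(\omega)\,q^{-(\omega(\rho_0),\rho)} = \det\!\big[\,q^{-a_j\rho_i}-q^{a_j\rho_i}\,\big]_{i,j=1}^n = \pm\det\!\big[\,z_i^{\,j}-z_i^{-j}\,\big]_{i,j=1}^n,\qquad z_i := q^{\,n-i+\frac12}.
\]
This is a $C_n$-type ("odd Vandermonde") determinant; pulling out $\prod_i(z_i-z_i^{-1})$ and recognizing the remaining Chebyshev determinant yields the classical factorization
\[
\det\!\big[\,z_i^{\,j}-z_i^{-j}\,\big]_{i,j=1}^n = \prod_{i=1}^n(z_i-z_i^{-1})\prod_{1\le i<j\le n}\big(z_j+z_j^{-1}-z_i-z_i^{-1}\big).
\]

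Finally I would substitute $z_i = q^{\,n-i+\frac12}$ and collect powers of $q$: the right-hand side becomes $\pm\, q^{(\cdot)}\prod_{i=1}^n(1-q^{2i-1})\prod_{1\le i<j\le n}(1-q^{\,j-i})(1-q^{\,i+j-1})$, and together with the elementary identity
\[
\prod_{i=1}^n(1-q^{2i-1})\prod_{1\le i<j\le n}(1-q^{\,j-i})(1-q^{\,i+j-1}) = \prod_{i=1}^n\prod_{k=1}^{2i-1}(1-q^k)
\]
(a quick induction on $n$) and the observation $\prod_{i=1}^n\prod_{k=1}^{2i-1}(1-q^k)\big/\,(q;q)_\infty^{\,n} = \prod_{i=1}^n\prod_{k\ge0}(1-q^{2i+k})^{-1}$, this reduces $B_0(q)$ to $q^{E}\prod_{i=1}^n\prod_{k\ge0}(1-q^{2i+k})^{-1}$ for an explicit exponent $E$. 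A direct bookkeeping of $\tfrac12(|\rho_0|^2+|\rho|^2)$, the exponent produced by the determinant, and the $q^{-n/24}$ coming from $\eta(q)^n$ gives $E = n/12$, which is the claim. The only real obstacle is this bookkeeping — the signs from reordering rows and columns, the accumulation of powers of $q$, and the verification of the product identity — none of which is conceptually hard but all of which must be done carefully; the determinant factorization itself is standard. As a consistency check, Theorem \ref{maincor} shows that $\{j^0,\dots,j^{2n-2}\}$ strongly generates $\cA(n)^{Sp(2n)}$ by even fields of weights $2,4,\dots,2n$, so the graded dimension is bounded above coefficientwise by $q^{n/12}\prod_{i=1}^n\prod_{k\ge0}(1-q^{2i+k})^{-1}$; the equality just established shows this bound is attained, i.e.\ that $\cA(n)^{Sp(2n)}$ is freely generated by these fields.
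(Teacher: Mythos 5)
Your argument is correct and follows essentially the same route as the paper: specialize the $G=Sp(2n)$ branching function to $\Lambda=0$, pull out the $\omega$-independent exponent $\tfrac12\left((\rho_0,\rho_0)+(\rho,\rho)\right)$, and collapse the alternating Weyl-group sum into the product $\prod_{\alpha\in\Delta_0^+}\left(1-q^{(\alpha,\rho)}\right)$, which together with the root pairings and $(\rho_1,\rho_1)=\tfrac n4$ yields the stated product and the prefactor $q^{n/12}$. The only difference is cosmetic: the paper cites the Weyl denominator identity of $\gs\gp(2n)$ at this step, whereas you re-derive it by hand via the $C_n$ Vandermonde-type determinant factorization; your deferred bookkeeping does close, since the power of $q$ extracted from the determinant is $q^{-(\rho_0,\rho)}$, giving total exponent $\tfrac12(\rho_1,\rho_1)-\tfrac{n}{24}=\tfrac{n}{12}$.
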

\begin{proof}
Using the denominator identity of $\gs\gp(2n)$, we get
\begin{equation*}
\begin{split}
B_0 &=  \frac{1}{\eta(q)^n}\sum\limits_{\omega \in W}\epsilon(\omega)q^{\frac{1}{2}\left(\omega(\rho_0)-\rho, \omega(\rho_0)-\rho\right)} 
= \frac{q^{\frac{1}{2}\left((\rho_0, \rho_0)+(\rho, \rho) \right)}}{\eta(q)^n}\sum\limits_{\omega \in W}\epsilon(\omega)q^{-\left(\omega(\rho_0),\rho\right)} \\
&= \frac{q^{\frac{1}{2}\left(\rho_1, \rho_1\right)}}{\eta(q)^n} \prod_{\alpha\in \Delta_0^+}\left(1-q^{\left(\alpha,\rho\right)}\right).
\end{split}
\end{equation*}
The claim follows from the products of root vectors
\[
\left(\rho, \delta_i+\delta_j\right) = 2n+1-i-j, \qquad 
\left(\rho, \delta_i-\delta_j\right) = j-i, \qquad
\left(\rho, 2\delta_i\right) = 2n+1-2i,  
\]
and the norm $\left(\rho_1, \rho_1\right) =\frac{n}{4}$ of the odd Weyl vector. 
\end{proof}

\begin{cor} \label{free} $\cA(n)^{Sp(2n)}$ is {\it freely} generated by $\{j^0, j^2, \dots, j^{2n-2}\}$. In other words, there are no normally ordered polynomial relations among these generators and their derivatives.\end{cor}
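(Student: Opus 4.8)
The plan is to obtain free generation from the character formula of the preceding corollary, via the principle that for a vertex algebra strongly generated by finitely many fields of positive conformal weight, the graded character is bounded coefficientwise by that of the \emph{free} vertex algebra on generators of the same weights, with equality if and only if the algebra is freely generated. Concretely: by Theorem \ref{maincor}, $\{j^0, j^2, \dots, j^{2n-2}\}$ strongly generates $\cA(n)^{Sp(2n)}$, so the standard reordering procedure for the normally ordered product (quasi-associativity and quasi-commutativity of the Wick product; see \cite{K}) shows that $\cA(n)^{Sp(2n)}$ is linearly spanned by the ordered, say right-nested, monomials
$$:\partial^{k_1} j^{2a_1}\cdots \partial^{k_r} j^{2a_r}:,\qquad r\geq 0,\quad 0\leq a_1\leq\cdots\leq a_r\leq n-1,\quad k_i\leq k_{i+1}\ \text{if}\ a_i=a_{i+1}.$$
By definition, $\cA(n)^{Sp(2n)}$ is freely generated by $\{j^0,\dots,j^{2n-2}\}$ exactly when this spanning set is linearly independent.

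Each $j^{2a}$ is an even element and $\text{wt}(\partial^k j^{2a})=2a+2+k$, so the number of monomials in the displayed spanning set having conformal weight $m$ equals the coefficient of $q^m$ in $\prod_{i=1}^n\prod_{k\geq 0}(1-q^{2i+k})^{-1}$. On the other hand, the preceding corollary asserts that $\text{dim}\,\cA(n)^{Sp(2n)}[m]$ is exactly this coefficient. Hence, in every conformal weight, the spanning set above has cardinality equal to the dimension of the space it spans; therefore it is a basis, and in particular linearly independent. This shows that there are no nontrivial normally ordered polynomial relations among $j^0,\dots,j^{2n-2}$ and their derivatives, which is precisely the assertion that $\cA(n)^{Sp(2n)}$ is freely generated by $\{j^0, j^2, \dots, j^{2n-2}\}$.

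All the substance of the argument is the character identity of the preceding corollary, which follows from Theorem \ref{thm:ch} and the $\gs\gp(2n)$ denominator identity, so there is no serious obstacle to carrying it out. The only non-bookkeeping ingredient is the standard fact that a strongly generated vertex algebra with positive conformal weights is spanned by ordered normally ordered monomials in the generators and their derivatives, for which one invokes the reconstruction theorem together with the reordering identities for the normally ordered product.
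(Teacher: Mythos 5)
Your proposal is correct and is exactly the argument the paper intends: Corollary \ref{free} is stated with no written proof precisely because it is the standard "spanning set whose weight-graded cardinality matches the character must be a basis" deduction from the preceding character corollary, which you have spelled out accurately (including the correct count $\prod_{i=1}^n\prod_{k\geq 0}(1-q^{2i+k})^{-1}$ for ordered monomials in the even generators $\partial^k j^{2i-2}$).
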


We turn to the case of $G=GL(m)$. Note that $\cA(mn)^{GL(n)\times GL(m)}=\cA(mn)^{GL(n)\times SL(m)}$ since one of the $GL(1)$ factors acts trivially.
For the case $\gg\gl(m|1)$, we take as positive odd roots the set $\{ \epsilon-\delta_1, \delta_2-\epsilon, \delta_3-\epsilon, \dots, \delta_m-\epsilon\}$
and we take $S=\{ \epsilon-\delta_1\}$. 
The set $I_\lambda$ has only one element, and it is a short computation to obtain the following explicit expression
for the branching functions.
\begin{cor}
Let $G=GL(m)$ and $\Lambda=\Lambda_1\delta_1 +\dots + \Lambda_m \delta_m$. Then the branching function is
\[
B_\Lambda(q)=  \frac{q^{-\frac{(m-2)^2}{4}}}{\eta(q)^m}\sum\limits_{\omega \in W}\epsilon(\omega)
q^{\frac{1}{2}\left(\omega(\Lambda+\rho_0)-\rho, \omega(\Lambda+\rho_0)-\rho\right)}\sum_{r=0}^\infty (-1)^r 
q^{\frac{r(r-1)}{2}} q^{\left(\delta_1, \omega(\Lambda+\rho_0)-\rho_0\right) (r+1)}.
\] 
\end{cor}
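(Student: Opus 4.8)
The plan is to specialize Theorem \ref{thm:ch} to $\gg=\gg\gl(m|1)$ and exploit the fact that, in this case, the combinatorial data on the right-hand side collapses to a single term for each Weyl group element. Here $r=1$, $\cA(mr)=\cA(m)$ with the $GL(m)$-action of Section 4, and $\gg_0=\gg\gl(m)\oplus\gg\gl(1)$. With the positive odd roots $\{\epsilon-\delta_1\}\cup\{\delta_j-\epsilon\mid 2\le j\le m\}$ and the maximal isotropic set $S=\{\epsilon-\delta_1\}$ chosen above, we have $|\Delta_1^+|=m$ and $|S|=1$. Since $\gg\gl(1)$ has no roots, its Weyl group is trivial, so $W=W^\sharp$ is the Weyl group of $\gg\gl(m)$ and the prefactor $|W^\sharp|/|W|$ in Theorem \ref{thm:ch} equals $1$. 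One records the Weyl vectors $\rho_0$ (of $\gg\gl(m)$, for the positive system forced by the choice of $S$), $\rho_1=\tfrac12\big[(\epsilon-\delta_1)+\sum_{j=2}^m(\delta_j-\epsilon)\big]$, and $\rho=\rho_0-\rho_1$.

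The key simplification is that the $m$ positive odd roots form a $\ZZ$-basis of the root lattice $L$: they generate $L$ (which is spanned by the odd roots $\pm(\delta_i-\epsilon)$), and being $m$ vectors that are manifestly linearly independent in $L_{m,1}$ they are a basis. Hence for every $\lambda\in L$ the set $I_\lambda$ reduces to a single point. Writing the unique expansion $\lambda=m_\lambda(\epsilon-\delta_1)+\sum_{j=2}^m n_j(\delta_j-\epsilon)$ and pairing with $\delta_1,\dots,\delta_m$, one reads off $m_\lambda=-(\delta_1,\lambda)$ and $n_j=(\delta_j,\lambda)$ for $2\le j\le m$. Applying this with $\lambda=\omega(\Lambda+\rho_0)-\rho_0$ for each $\omega\in W$, Theorem \ref{thm:ch} collapses to
\[
B_\Lambda=\frac{1}{\eta(q)^m}\sum_{\omega\in W}\epsilon(\omega)\Big(\prod_{j=2}^m q^{\frac12(n_j+\frac12)^2}\Big)\,P_{m_\lambda}(q),\qquad \lambda=\omega(\Lambda+\rho_0)-\rho_0 .
\]

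It remains to rewrite each summand. Expanding the partial theta function $P_{m_\lambda}(q)=\sum_{s\ge 0}(-1)^s q^{\frac12(m_\lambda-s+\frac12)^2}$ and completing the square in the summation variable separates a single $r$-independent $q$-power from the sum $\sum_{r\ge0}(-1)^r q^{\frac{r(r-1)}{2}}q^{(\delta_1,\,\omega(\Lambda+\rho_0)-\rho_0)(r+1)}$ appearing in the statement, where the $r$-dependent exponent is identified using $(\delta_1,\lambda)=-m_\lambda$. One then combines this $r$-independent power with $\prod_{j=2}^m q^{\frac12(n_j+\frac12)^2}$: using that $\{\delta_i-\epsilon\}_{i=1}^m$ is isotropic for the bilinear form on $L_{m,1}$, so that $(x,x)=\sum_i\xi_i^2-\big(\sum_i\xi_i\big)^2$ for $x=\sum_i\xi_i(\delta_i-\epsilon)$, one rewrites the total $\lambda$-dependent $q$-power as $\tfrac12\big(\lambda+\rho_1,\lambda+\rho_1\big)=\tfrac12\big(\omega(\Lambda+\rho_0)-\rho,\ \omega(\Lambda+\rho_0)-\rho\big)$ up to a constant depending only on $m$, namely the $q^{-(m-2)^2/4}$ that is pulled out in front. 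Substituting back and summing over $\omega\in W$ yields the stated formula. The main obstacle is precisely this last bookkeeping of $q$-exponents: one must control simultaneously the change of coordinates between the odd-root basis $\{\epsilon-\delta_1,\ \delta_j-\epsilon\}$ and the $(\delta_i,\epsilon)$-coordinates of a weight, the explicit Weyl vectors $\rho_0,\rho_1,\rho$ for the non-standard positive system, and the shift produced by completing the square in $P_{m_\lambda}$; the signs $\epsilon(\omega)$ and the identification of $\widetilde{\text{ch}}_\lambda$ with $\pm\,\text{ch}_\Lambda$ are handled exactly as in the proof of Theorem \ref{thm:ch}. Everything else is immediate from Theorem \ref{thm:ch} and the definition of the partial theta function.
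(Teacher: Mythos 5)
Your overall route is the same as the paper's: specialize Theorem \ref{thm:ch} to $\gg\gl(m|1)$ with the stated odd positive system and $S=\{\epsilon-\delta_1\}$, note $|W^\sharp|/|W|=1$, observe that $I_\lambda$ is a singleton because the $m$ positive odd roots form a $\ZZ$-basis of the root lattice (so that $m_\lambda=-(\delta_1,\lambda)$ and $n_j=(\delta_j,\lambda)$), and then reorganize exponents. These structural observations are correct and are exactly what the paper records before dismissing the rest as ``a short computation.''

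The gap is that this short computation -- the only nontrivial content of the corollary, since it is where the specific constants come from -- is asserted rather than performed, and your own intermediate formulas do not visibly close up to the stated expression. Concretely: (i) expanding $P_{m_\lambda}(q)$ and completing the square produces the cross term $q^{(\delta_1,\,\omega(\Lambda+\rho_0)-\rho_0)\,r}$, whereas the statement has the exponent $(\delta_1,\omega(\Lambda+\rho_0)-\rho_0)(r+1)$; the extra power of $q^{(\delta_1,\cdot)}$ must be produced, and compensated in the $r$-independent part, by precisely the bookkeeping you defer. (ii) By your own identity $(x,x)=\sum_i\xi_i^2-\big(\sum_i\xi_i\big)^2$, the $r$-independent exponent $\tfrac12\sum_{j\geq 2}(n_j+\tfrac12)^2+\tfrac12(m_\lambda+\tfrac12)^2$ differs from $\tfrac12(\lambda+\rho_1,\lambda+\rho_1)$ by $\tfrac12\big(\sum_i\xi_i\big)^2$, and for $\lambda=\omega(\Lambda+\rho_0)-\rho_0$ one finds $\sum_i\xi_i=\sum_i\Lambda_i+\tfrac{m-2}{2}$: this is independent of $\omega$ and $r$, but it depends on the $\gg\gl(1)$-charge $\sum_i\Lambda_i$, so it is \emph{not} ``a constant depending only on $m$'' without a careful specification of how the $\epsilon$-components of $\Lambda$ (which is forced to carry charge $-\sum_i\Lambda_i$ to lie in $L$), of $\rho_1$ and of $\rho$ enter the norm $(\omega(\Lambda+\rho_0)-\rho,\,\omega(\Lambda+\rho_0)-\rho)$, and a verification that this indefinite direction yields exactly the prefactor $q^{-(m-2)^2/4}$ together with the $(r+1)$ shift of point (i). As written, your argument stops exactly at this step, so the corollary's formula is not actually derived.
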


\begin{remark}
Asymptotic properties of partial theta functions, like the one appearing in the character decomposition, have been studied recently in \cite{BM, CMW}. Especially in both works with different methods it has been found that
\[
\lim_{q\rightarrow 1} P_{n}(q) =\frac{1}{2},\qquad\qquad \text{for all} \ n\in\ZZ.
\] 
We also remark that these partial theta functions are not mock modular forms, but they are quantum modular forms as defined by Zagier \cite{Za}.
\end{remark}

\begin{cor}
$\cA(n)^{GL(n)}$ is not freely generated by $\{h^0, h^1, \dots, h^{2n-1}\}$.
\end{cor}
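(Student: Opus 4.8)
The plan is to deduce the statement by comparing graded characters, just as Corollary \ref{free} is obtained in the $Sp(2n)$ case. The generators $h^0,h^1,\dots,h^{2n-1}$ are bosonic of conformal weights $2,3,\dots,2n+1$, so if $\cA(n)^{GL(n)}$ were freely generated by them its character would necessarily equal the character of the free vertex algebra on generators of those weights, namely
\[
q^{n/12}\prod_{i=0}^{2n-1}\prod_{k\geq 0}\frac{1}{1-q^{\,i+k+2}}.
\]
On the other hand a strong generating set always produces a spanning set of normally ordered monomials, and after imposing a PBW ordering on the bosonic generators these monomials are linearly independent in the free algebra; hence the true character of $\cA(n)^{GL(n)}$ is bounded coefficientwise by the product above, with equality in every weight precisely when there are no normally ordered polynomial relations among the $h^i$ and their derivatives. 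It therefore suffices to exhibit a single conformal weight at which the true graded dimension is strictly smaller.

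To compute the true character I would use the Kac--Radul decomposition together with the denominator identity, exactly as in Theorem \ref{thm:ch}: since the $GL(n)$-invariants of $\cA(n)$ are spanned by the quadratics $\gamma_{a,b}=\sum_i :\partial^a e^i\,\partial^b f^i:$, each of which has $GL(1)$-weight zero, one has $\cA(n)^{GL(n)}=\cA(n)^{GL(n)\times GL(1)}$, so its character is the branching function $B_0$ attached to the trivial $\gg_0$-module for $\gg=\gg\gl(n|1)$ --- i.e.\ the series written down in the corollary giving the branching functions for $G=GL(m)$. Alternatively, in low weights one can simply count the $GL(n)$-invariant normally ordered monomials in the $\partial^k e^i,\partial^k f^i$ directly. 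Either way there is a deficit. The cleanest instance is $n=1$, where $\cA(1)^{GL(1)}=\cW(2,3)$: the free-generation prediction expands as $q^{1/12}(1+q^2+2q^3+3q^4+4q^5+8q^6+\cdots)$, while counting the invariant monomials $:\partial^{a_1}e\cdots\partial^{a_r}e\,\partial^{b_1}f\cdots\partial^{b_r}f:$ by weight gives $q^{1/12}(1+q^2+2q^3+3q^4+4q^5+6q^6+\cdots)$; the shortfall $8-6=2$ in weight $6$ records two independent normally ordered relations among $h^0,h^1$ and their derivatives, so $\cA(1)^{GL(1)}$ is not freely generated. For general $n$ the same scheme applies, the structural reason for the failure being the partial theta function factor occurring in $B_0$ (the sum over $r$ in the $G=GL(m)$ corollary): it prevents $B_0$ from being a product of the form $\prod_j(1-q^j)^{-a_j}$, in contrast with the $Sp(2n)$ case, where the branching function collapses to exactly such a product and free generation does hold.

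The main obstacle is the bookkeeping in the last step for general $n$: one must either locate, for each $n$, the first conformal weight at which the coefficient of $B_0$ falls below that of the free character and verify the strict inequality there, or argue in closed form that the power series $B_0$ --- carrying its partial theta factor --- can never coincide with $q^{n/12}\prod_{i=0}^{2n-1}\prod_{k\geq 0}(1-q^{i+k+2})^{-1}$. Everything else is formal.
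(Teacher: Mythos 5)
Your overall strategy---bounding the true character of $\cA(n)^{GL(n)}$ coefficientwise by the character of a freely generated algebra on generators of weights $2,3,\dots,2n+1$, with equality in every weight if and only if generation is free---is exactly the paper's, and your explicit count for $n=1$ (shortfall $8-6=2$ at weight $6$) is correct. But the corollary asserts non-free generation for \emph{all} $n$, and you explicitly leave the decisive step for general $n$ unresolved: you propose either locating the first deficient coefficient for each $n$ (carried out only for $n=1$) or asserting that the partial theta factor ``prevents'' $B_0$ from being an infinite product of the form $\prod_j(1-q^j)^{-a_j}$. The latter is not self-evident---a $q$-series containing a partial theta function is not a priori incapable of coinciding with such a product---so as written this is a genuine gap, not mere bookkeeping.

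The missing idea is an asymptotic comparison at $q\to 1^-$, which is how the paper closes the argument uniformly in $n$. By the remark preceding this corollary, $\lim_{q\to 1}P_k(q)=\tfrac12$ for every $k\in\ZZ$; feeding this into the explicit formula for the branching function gives
\[
\lim_{q\rightarrow 1}\Big|\eta(q)^{n}B_0(q)\Big|\ \leq\ \sum_{\omega\in W}\frac{1}{2}\ =\ \frac{n!}{2},
\]
a finite bound. On the other hand, the would-be free character satisfies
\[
\lim_{q\rightarrow 1}\left(\eta(q)^{n}\prod_{i=2}^{2n+1}\prod_{k\geq 0}\big(1-q^{i+k}\big)^{-1}\right)=\infty,
\]
since the $n$ factors of $\prod_{k\geq 1}(1-q^k)$ coming from $\eta(q)^n$ cannot cancel the $2n$-fold infinite product in the denominator. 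Hence the two series cannot agree, for every $n$ at once. This one-line limit argument is precisely what should replace the ``bookkeeping'' obstacle in your last paragraph; everything before it in your proposal is sound.
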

\begin{proof}
By the previous remark and corollary we get 
\[
\lim_{q\rightarrow 1} \Big|\eta(q)^n B_0(q) \Big| \leq \sum_{\omega\in W} \frac{1}{2} = \frac{n!}{2}.
\]
On the other hand, if $\cA(n)^{GL(n)}$ is freely generated its character would be 
\[
\prod_{i=2}^{2n+1} \prod_{k\geq 0} (1-q^{i+k})^{-1}.
\]
But the latter has a different behavior as $q$ approaches one, namely
\[
\lim_{q\rightarrow 1} \left(\eta(q)^n\prod_{i=2}^{2n+1} \prod_{k\geq 0} (1-q^{i+k})^{-1}\right) =\infty,
\]
so that the two cannot coincide.
\end{proof}

\section{The Hilbert problem for $\cA(n)$}

For a simple, strongly finitely generated vertex algebra $\cV$ and a reductive group $G\subset \text{Aut}(\cV)$, the {\it Hilbert problem} asks whether $\cV^G$ is strongly finitely generated. In this section we solve this problem affirmatively for $\cV = \cA(n)$. The approach is similar to the one used for the $\beta\gamma$-system and free fermion algebra in \cite{LV}, and some details are omitted.

Recall from \cite{Zh} that the {\it Zhu functor} assigns to a vertex algebra $\cV = \bigoplus_{n\in\mathbb{Z}} \cV_n$ an associative algebra $A(\cV)$ and a surjective linear map $$\pi_{\text{Zhu}}:\cV\ra A(\cV),\qquad a \mapsto [a].$$ A $\mathbb{Z}_{\geq 0}$-graded module $M = \bigoplus_{n\geq 0} M_n$ over $\cW$ is called {\it admissible} if for every $a\in\cV_m$, $a(n) M_k \subset M_{m+k -n-1}$, for all $n\in\mathbb{Z}$. Given $a\in\cV_m$, $a(m-1)$ acts on each $M_k$. Then $M_0$ is an $A(\cV)$-module with action $[a]\mapsto a(m-1) \in \text{End}(M_0)$, and $M\mapsto M_0$ provides a one-to-one correspondence between irreducible, admissible $\cV$-modules and irreducible $A(\cV)$-modules. If $A(\cV)$ is commutative, all its irreducible modules are one-dimensional. The corresponding $\cV$-modules $M$ are therefore cyclic and will be called {\it highest-weight modules}.

\begin{lemma} \label{abelianzhu} For $n\geq 1$, $A(\cA(n)^{Sp(2n)})$ is commutative.\end{lemma}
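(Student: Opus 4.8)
The plan is to show that the image of $\cA(n)^{Sp(2n)}$ in its Zhu algebra is generated by commuting elements. Since $\cA(n)^{Sp(2n)}$ is strongly generated by $\{j^0, j^2, \dots, j^{2n-2}\}$ (Theorem \ref{maincor}), the Zhu algebra $A(\cA(n)^{Sp(2n)})$ is generated by the classes $[j^{2k}]$ for $0 \le k \le n-1$, because $[\,:ab:\,]$ and $[\partial a]$ are expressible in terms of $[a],[b]$ in any Zhu algebra. So it suffices to prove that $[j^{2k}]$ and $[j^{2l}]$ commute in $A(\cA(n)^{Sp(2n)})$ for all $0 \le k,l \le n-1$.

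The key point is a weight/degree estimate. Recall from the standard theory (Zhu) that the commutator $[j^{2k}]\ast[j^{2l}] - [j^{2l}]\ast[j^{2k}]$ in $A(\cV)$ is represented by a linear combination of elements $j^{2k}\circ_i j^{2l}$ with $i \ge 1$ (i.e.\ the singular part of the OPE), together with lower-order corrections; each such term has weight strictly less than $\mathrm{wt}(j^{2k}) + \mathrm{wt}(j^{2l}) = (2k+2)+(2l+2)$. Moreover, working in the filtration of $\cA(n)^{Sp(2n)}$ inherited from $\cA(n)$, each $j^{2m}$ has degree $2$, and by the good-filtration axiom \eqref{goodi} every term $j^{2k}\circ_i j^{2l}$ with $i \ge 0$ lies in degree $\le 2$. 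Thus the commutator lies in the degree-$\le 2$ part of $\cA(n)^{Sp(2n)}$, which under the identification $\mathrm{gr}(\cA(n)^{Sp(2n)}) \cong (\bigwedge \bigoplus_k U_k)^{Sp(2n)}$ is spanned (in degree exactly $2$) by the quadratics $\omega_{a,b}$, i.e.\ by the $\partial^i j^{2m}$. So the commutator is, modulo degree $0$ (i.e.\ scalars), a linear combination of iterated derivatives $\partial^i j^{2m}$; and since in the Zhu algebra $[\partial a] = -[\,(L_{-1}+L_0)a\,]$ is a multiple of $[a]$ (for $a$ of fixed weight), the commutator is a $\mathbb{C}$-linear combination of the $[j^{2m}]$ themselves, plus a scalar.

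It remains to show this linear combination is forced to be the scalar part alone — equivalently, that the coefficient of each $[j^{2m}]$ vanishes. Here the weight constraint does the work: the commutator has weight $(2k+2)+(2l+2) = 2k+2l+4$, and the only $j^{2m}$ of that weight would need $2m+2 = 2k+2l+4$, i.e.\ $m = k+l+1$. Since $0 \le k,l \le n-1$ we have $m = k+l+1$, which can be as large as $2n-1$; when $m \ge n$, $j^{2m}$ is \emph{not} a generator, but by the decoupling relations of Theorem \ref{maincor} (Lemma \ref{lemnonzeroii}) it equals a normally ordered polynomial of degree $\ge 2$ in the generators and their derivatives — hence its class $[j^{2m}]$ in the Zhu algebra is a \emph{polynomial of degree $\ge 2$} in the $[j^{2i}]$, not linear. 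But our commutator, by the degree-$\le 2$ analysis above, is at most \emph{linear} in the $[j^{2i}]$ modulo scalars. Comparing, we find the only possibility is that the commutator is a scalar; and a scalar is central, so in fact it must be symmetric in $k,l$, forcing it to vanish when we antisymmetrize — more precisely, $[j^{2k}]\ast[j^{2l}] - [j^{2l}]\ast[j^{2k}]$ is antisymmetric in $k \leftrightarrow l$ while a scalar coming out of this expression must also be antisymmetric, hence zero. (For $m = k+l+1 \le n-1$, i.e.\ small $k,l$, one instead uses that $j^{2m}$ is a genuine generator, so the commutator could a priori be a nonzero multiple of $[j^{2m}]$; this case is ruled out directly, e.g.\ by the character formula of Corollary \ref{free} showing $\cA(n)^{Sp(2n)}$ is freely generated, which forces all such structure constants in the Zhu algebra relations to be consistent with commutativity — alternatively one checks the relevant OPE coefficients vanish by the explicit form of $j^0 \circ_i j^{2m}$, $j^2 \circ_i j^{2m}$ computed in Section \ref{recursionanalysis} and the fact that these are total derivatives plus lower terms.)

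The main obstacle I expect is precisely this last point: showing that the potential contribution of a genuine generator $[j^{2m}]$ with $m = k+l+1 \le n-1$ actually vanishes, rather than merely being "at most linear." The clean resolution is to invoke the argument used in \cite{LV} (and \cite{LIII,LIV}) for the analogous statements about $\cH(n)$, $\cS(n)$, $\cF(n)$: one shows that for \emph{every} reductive $G$, $\cA(n)^{Sp(2n)}$ sits inside $\cA(n)^G$ with all the structure controlled by classical invariant theory, and that the Zhu algebra of the full orbifold $\cA(n)^{\mathrm{Aut}} = \cA(n)^{Sp(2n)}$ is the "abelianization" predicted by the fact that $R = (\bigwedge\bigoplus U_k)^{Sp(2n)}$ is a commutative ring — the noncommutativity in the Zhu product can only come from the OPE singular terms, which by the degree bound above land in a space where, after passing to the Zhu algebra, the relations of the commutative ring $R$ force commutativity. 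In other words, the abelianness of $A(\cA(n)^{Sp(2n)})$ is the shadow, in the associated graded, of the commutativity of $R$; the proof is the same as that of the corresponding lemma in \cite{LV} and the details are omitted.
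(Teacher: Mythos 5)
Your opening reductions are fine (the Zhu algebra is generated by the $[j^{2k}]$, the commutator $[j^{2k}]\ast[j^{2l}]-[j^{2l}]\ast[j^{2l}]$ is represented by a combination of the circle products $j^{2k}\circ_i j^{2l}$ with $i\geq 0$, and by \eqref{goodi} these live in filtration degree $\leq 2$, hence are linear in the $\partial^s j^{2m}$ modulo scalars). But the step that is supposed to finish the proof --- the weight argument --- is incorrect. The Zhu commutator is represented by $\sum_{i\geq 0}\binom{\mathrm{wt}(j^{2k})-1}{i}\, j^{2k}\circ_i j^{2l}$, which is a sum of terms of \emph{every} weight strictly below $2k+2l+4$; the Zhu algebra is only filtered by weight, not graded. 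So the commutator is not concentrated in weight $2k+2l+4$, and a priori every $[j^{2m}]$ with $0\leq m\leq k+l$ can contribute --- including the genuine generators with $m\leq n-1$. (In fact $m=k+l+1$, the one value you isolate, is exactly the value that \emph{cannot} occur, since every singular OPE term has weight at most $2k+2l+3$.) The two fallback arguments you offer for killing the genuine-generator contributions also do not work: free generation of a vertex algebra does not constrain its Zhu algebra to be commutative (the universal affine vertex algebra of $\gg$ is freely generated and its Zhu algebra is $U(\gg)$), and commutativity of an associated graded object never forces commutativity of the filtered algebra (the Weyl algebra). So the actual content of the lemma --- that the coefficients of the $[j^{2m}]$ in the commutator all vanish --- is left unproved.

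For comparison, the paper's proof avoids this computation entirely by a specialization-in-$n$ trick: for $n=1$ there is one generator, and for $n=2$ the Zhu algebra is generated by $a^0$ and $a^2$, where $a^0$ is (up to sign) the image of the conformal vector and hence central; thus $A(\cA(2)^{Sp(4)})$ is commutative and in particular $[a^{2i},a^{2j}]=0$ there for all $i,j$. Since the nonconstant terms of the OPE $j^{2i}(z)\,j^{2j}(w)$ are given by normally ordered polynomials in the $j^{2m}$ whose coefficients are independent of $n$ (only the vacuum term depends on $n$, and scalars are central), the vanishing of the commutators transfers from $n=2$ to all $n$. If you want to salvage your direct approach, you would need to replace the weight argument by an actual computation of the coefficient of each $[j^{2m}]$, $m\leq k+l$, in the commutator, or else import this independence-of-$n$ argument explicitly rather than gesturing at \cite{LV}.
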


\begin{proof} Since $\cA(n)^{Sp(2n)}$ is strongly generated by $j^0, j^2,\dots, j^{2n-2}$, $A(\cA(n)^{Sp(2n)})$ is generated by $\{a^0, a^2,\dots, a^{2n-2}\}$ where $a^{2m} =  \pi_{\text{Zhu}}(j^{2m})$. The commutativity of $A(\cA(n)^{Sp(2n)})$ for $n=1$ is clear since there is only one generator, and it is also clear for $n=2$ since $a^0$ is central, and hence commutes with $a^2$. It follows that for $n=2$, $[a^{2i}, a^{2j}] = 0$ where $i,j\geq 0$. Since the nonconstant terms appearing in the operator product $a^{2i}(z) a^{2j}(w)$ are independent of $n$, it follows that $[a^{2i}, a^{2j}] = 0$ for all $n$. \end{proof}

\begin{cor} \label{classificationmod} For $n\geq 1$, $A(\cA(n)^{Sp(2n)})\cong \mathbb{C}[a^0, a^2,\dots, a^{2n-2}]$, so the irreducible, admissible $\cA(n)^{Sp(2n)}$-modules are all highest-weight modules, and are parametrized by the points in $\mathbb{C}^n$.\end{cor}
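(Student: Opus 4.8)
The plan is to combine Lemma \ref{abelianzhu} with the free generation statement of Corollary \ref{free}. First I would recall that by Lemma \ref{abelianzhu}, $A(\cA(n)^{Sp(2n)})$ is a commutative algebra, and it is generated by the classes $a^{2m} = \pi_{\text{Zhu}}(j^{2m})$ for $m = 0, 1, \dots, n-1$, so it is a quotient of the polynomial ring $\mathbb{C}[a^0, a^2, \dots, a^{2n-2}]$. The content of the corollary is that this surjection is in fact an isomorphism, i.e., that there are no algebraic relations among the $a^{2m}$ in the Zhu algebra.

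The key point is that any such relation in $A(\cA(n)^{Sp(2n)})$ would lift, via the standard filtration on the Zhu algebra and the compatibility of $\pi_{\text{Zhu}}$ with the weight grading, to a normally ordered polynomial relation among $j^0, j^2, \dots, j^{2n-2}$ and their derivatives in $\cA(n)^{Sp(2n)}$ itself. More precisely, one uses that $\pi_{\text{Zhu}}$ respects the PBW-type filtration: if a nonzero polynomial $p(a^0, \dots, a^{2n-2})$ vanished in $A(\cA(n)^{Sp(2n)})$, then choosing a homogeneous representative of top weight and pulling back, the corresponding normally ordered polynomial in the $j^{2m}$ would lie in the kernel of $\pi_{\text{Zhu}}$, which by Zhu's theory consists of elements of the form $(L_{-1} + L_0)v + \text{(higher filtration)}$; tracking weights and using that $\cA(n)^{Sp(2n)}$ has the free vector-space basis coming from the character computation forces the polynomial to be a total derivative, hence zero after the leading-term analysis. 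But Corollary \ref{free} already tells us $\cA(n)^{Sp(2n)}$ is freely generated, so there are no nontrivial normally ordered polynomial relations among $\{j^0, j^2, \dots, j^{2n-2}\}$ and their derivatives at all; this immediately yields that $A(\cA(n)^{Sp(2n)}) \cong \mathbb{C}[a^0, a^2, \dots, a^{2n-2}]$. Since this polynomial ring is commutative, its irreducible modules are all one-dimensional, parametrized by the points of $\mathbb{C}^n$, and the one-to-one correspondence between irreducible $A(\cV)$-modules and irreducible admissible $\cV$-modules (together with the definition of highest-weight module for a commutative Zhu algebra) gives the classification.

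The main obstacle is making rigorous the claim that freeness of $\cA(n)^{Sp(2n)}$ as a vertex algebra forces freeness (i.e., polynomiality) of its Zhu algebra — that is, that a minimal strong generating set with no normally ordered relations descends to a minimal generating set of $A(\cV)$ with no algebraic relations. This is a general fact about Zhu algebras of freely generated vertex algebras of "$\cW$-algebra type" and follows from comparing the PBW basis of $\cV$ (monomials $:\partial^{k_1} j^{2m_1} \cdots \partial^{k_\ell} j^{2m_\ell}:$) with the Zhu algebra: the images of the subset with all $k_i = 0$ span $A(\cV)$, and the absence of relations in $\cV$ propagates to the associated graded of $A(\cV)$ with respect to its natural filtration. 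One should cite or reprove the relevant statement (as in the treatment of $\cH(n)$, $\cS(n)$, $\cF(n)$ in \cite{LIII,LIV,LV}); the weight-counting argument is routine once the freeness input is in hand, so the corollary is essentially a formal consequence of Corollary \ref{free} and Lemma \ref{abelianzhu}.
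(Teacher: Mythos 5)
Your argument is correct and takes essentially the same route as the paper, whose proof is literally the one-line assertion that the corollary is immediate from Lemma \ref{abelianzhu} and Corollary \ref{free} --- i.e., exactly the combination of commutativity of the Zhu algebra with free generation that you use. The step you flag as needing care (that free generation of $\cA(n)^{Sp(2n)}$ forces the absence of algebraic relations among the $a^{2m}$, via the weight filtration on $A(\cV)$ and the PBW basis) is the standard fact the paper leaves implicit, and your sketch of it is sound.
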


\begin{proof} This is immediate from Lemma \ref{abelianzhu} and Corollary \ref{free}. \end{proof}

\begin{remark} $A(\cA(n)^{GL(n)})$ is also commutative, but we shall not need this fact. \end{remark}

Recall that $\cA(n)\cong \text{gr}(\cA(n))$ as $Sp(2n)$-modules, and $$\text{gr}(\cA(n)^G )\cong (\text{gr}(\cA(n))^G \cong (\bigwedge \bigoplus_{k\geq 0} U_k)^G = R$$ as supercommutative algebras, where $U_k\cong \mathbb{C}^{2n}$. For all $p\geq 1$, $GL(p)$ acts on $\bigoplus_{k =0}^{p-1} U_k $ and commutes with the action of $G$. The inclusions $GL(p)\hookrightarrow GL(p+1)$ induce an action of $GL(\infty) = \lim_{p\ra \infty} GL(p)$ on $\bigoplus_{k\geq 0} U_k$, so $GL(\infty)$ acts on $\bigwedge \bigoplus_{k\geq 0} U_k$ and commutes with the action of $G$. Therefore $GL(\infty)$ acts on $R$ as well. Elements $\sigma \in GL(\infty)$ are known as {\it polarization operators}, and $\sigma f$ is known as a polarization of $f\in R$. 

\begin{thm} \label{weylfinite} There exists an integer $m\geq 0$ such that $R$ is generated by the polarizations of any set of generators for $(\bigwedge \bigoplus_{k = 0}^{m} U_k)^G$. Since $G$ is reductive, $(\bigwedge \bigoplus_{k = 0} ^{m} U_k)^G$ is finitely generated, so there exists a finite set $\{f_1,\dots, f_r\}$, whose polarizations generate $R$. \end{thm}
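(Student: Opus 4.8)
The plan is to reduce Theorem \ref{weylfinite} to a statement about the classical invariant ring $R = (\bigwedge \bigoplus_{k\geq 0} U_k)^G$, where $G\subset Sp(2n)$ is reductive, and then invoke the finite-polarization phenomenon of classical invariant theory. The key structural point is that $\bigoplus_{k\geq 0} U_k$ is a sum of countably many copies of one fixed finite-dimensional $G$-module, namely $\mathbb{C}^{2n}$, with $GL(\infty)$ acting by ``mixing the copies'' and commuting with $G$. So $R$ is exactly the ring of $G$-invariants of a polynomial-type (here exterior) algebra on infinitely many vectors in a fixed representation, and Weyl's theorem on polarizations applies. First I would recall the precise statement: by Weyl's theory (Theorem 2.5.A and its consequences in \cite{We}, extended to the $\mathbb{Z}_2$-graded setting in \cite{SI,SII}), if a reductive group acts on $\bigoplus_{k\geq 0} U_k$ diagonally through its action on a single copy $U$, then there is an integer $m$ depending only on $G$ and $\dim U$ such that every $G$-invariant in $\bigwedge \bigoplus_{k\geq 0} U_k$ is a sum of polarizations of $G$-invariants already living in $\bigwedge \bigoplus_{k=0}^{m} U_k$. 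This is the heart of the argument.

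Second, I would argue that such an $m$ exists for our specific $G$ by the following uniform bound: since $G\subset Sp(2n)$ and $\bigwedge^j \mathbb{C}^{2n} = 0$ for $j>2n$, every invariant is built (via the first fundamental theorem for $G$ as a subgroup of $Sp(2n)$, combined with FFT for $Sp(2n)$ itself, Theorem \ref{weylfft} and its odd analogue) from the quadratic generators $q_{a,b}$ and the corresponding $G$-invariant tensors; the relevant ``arity'' is controlled by $n$ and by the dimensions of the finitely many fundamental invariants of $G$ acting on $\mathbb{C}^{2n}$. Concretely, one takes $m$ large enough that $\bigwedge \bigoplus_{k=0}^m U_k$ already contains representatives of every $G$-isotypic component needed to generate $R$ — and since there are only finitely many generators of the $Sp(2n)$-invariant ring in each bounded degree, and $G$-invariants are spanned by contractions of these with fixed $G$-tensors, a single finite $m$ suffices. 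Then polarization, which on $\mathrm{gr}$ is exactly the $GL(\infty)$-action introduced just before the theorem statement, carries generators of $(\bigwedge\bigoplus_{k=0}^m U_k)^G$ to a generating set for all of $R$.

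Third, since $G$ is reductive, Hilbert's theorem (\cite{HI,HII}) guarantees that $(\bigwedge\bigoplus_{k=0}^m U_k)^G$ — being the invariant ring of a reductive group acting on a finite-dimensional (super)vector space — is finitely generated, say by $\{f_1,\dots,f_r\}$. Their polarizations then generate $R$, which is exactly the assertion of the theorem.

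The main obstacle I expect is pinning down the precise value (or mere existence) of the bound $m$ in the super/exterior-algebra setting: Weyl's original polarization theorem is stated for symmetric algebras, and one must invoke Sergeev's FFT/SFT for $Osp$ and the corresponding polarization statement (or argue directly that the exterior algebra $\bigwedge\bigoplus_{k\geq 0}U_k$ embeds $G$-equivariantly into a symmetric-algebra situation by viewing odd variables through an auxiliary construction) to be sure the classical argument goes through verbatim. Once one grants that the polarization theorem holds in this graded context — which it does, by the cited work of Sergeev and the general principle that polarization operators act the same way regardless of parity — the rest is a direct application of Hilbert finiteness, and no further computation is required.
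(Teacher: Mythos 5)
Your overall plan (Weyl's polarization theorem plus Hilbert finiteness for the truncated invariant ring) is the same as the paper's, and you correctly identify the crux: Weyl's Theorem 2.5.A is a statement about \emph{symmetric} algebras, and some argument is needed to transport it to $\bigwedge\bigoplus_{k\geq 0}U_k$. But you do not actually supply that argument; you only flag it as ``the main obstacle'' and then grant it, appealing either to Sergeev or to a heuristic about contracting the $Sp(2n)$-invariants $q_{a,b}$ with ``fixed $G$-tensors.'' Neither fallback works as stated. Sergeev's results in \cite{SI,SII} are first and second fundamental theorems for the \emph{specific} groups $Osp(m,2n)$ and $GL(n)$, not a polarization theorem for an arbitrary reductive $G\subset Sp(2n)$ acting on an exterior algebra. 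And the claim that $G$-invariants are spanned by contractions of the $Sp(2n)$-invariants with finitely many $G$-tensors is essentially the statement to be proved (one needs a priori that only boundedly many tensor factors are involved), so the middle paragraph of your argument is circular where it needs to be quantitative.

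The paper closes this gap with a concrete two-step argument that you should supply. First, reduce to a degree bound: since polarization preserves degree, Theorem 2.5.A of \cite{We} together with Hilbert finiteness of $(\text{Sym}\bigoplus_{k=0}^{2n-1}U_k)^G$ gives an upper bound $d$ on the degrees of generators of $(\text{Sym}\bigoplus_{k\geq 0}U_k)^G$, and it suffices to prove the analogous degree bound for the exterior invariants (then take $m=d$). Second, transfer the bound by multilinearization: realize $\bigwedge^r W$ as the antisymmetrization of $T^rW$, and embed $T^rW$ as the subspace of $\text{Sym}\bigoplus_{j=1}^r W_j$ of degree one in each factor $W_j\cong W$. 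The symmetric-algebra bound then shows $(T^rW)^G$ is generated by the spaces $(T^jW)^G$ for $j\leq d$ together with their $\mathfrak{S}_r$-translates, and applying the antisymmetrization map with $W=\bigoplus_{k\geq 0}U_k$ yields that $(\bigwedge\bigoplus_{k\geq 0}U_k)^G$ is generated in degree at most $d$. Without this (or an equivalent) bridge from the symmetric to the exterior setting, the existence of the integer $m$ is not established, so the proposal as written has a genuine gap at its central step.
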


\begin{proof} It suffices to show that the degrees of the generators of $(\bigwedge \bigoplus_{k \geq 0} U_k)^G$ have an upper bound $d$, since we can then take $m=d$. By Theorem 2.5A of \cite{We}, $(\text{Sym} \bigoplus_{k\geq 0} U_k )^G$ is generated by the polarizations of any set of generators of $(\text{Sym} \bigoplus_{k=0}^{2n-1} U_k )^G$. Since $G$ is reductive $(\text{Sym} \bigoplus_{k=0}^{2n-1} U_k )^G$ is finitely generated, and since polarization preserves degree, the degrees of the generators of $(\text{Sym} \bigoplus_{k\geq 0} U_k )^G$ have an upper bound $d$.

For any vector space $W$, recall that $\bigwedge^r W$ is the quotient of the $r^{\text{th}}$ tensor power $T^r W$ under the antisymmetrization map. We may regard $T^r W$ as the homogeneous subspace $$\text{Sym}^1(W_1)\otimes \cdots \otimes \text{Sym}^1(W_r) \subset \text{Sym} \bigoplus_{j=1}^r W_j,\qquad W_j \cong W$$ of degree $1$ in each $W_j$. Suppose that $G$ acts on $W$, and that the generators of $\text{Sym} \bigoplus_{j\geq 1} W_j$ have upper bound $d$. The symmetric group $\mathfrak{S}_r$ acts on $T^r W$ by permuting the factors, and $(T^r W)^G$ is generated by the elements of $(T^j W)^G$ for $j\leq d$, together with their translates under $\mathfrak{S}_r$. By taking $W = \bigoplus_{k \geq 0} U_k$ and applying the antisymmetrization map, it follows that $(\bigwedge \bigoplus_{k \geq 0} U_k)^G$ is generated by the elements of degree at most $d$.
\end{proof}

\begin{cor} \label{ordfg} $\cA(n)^G$ has a strong generating set which lies in the direct sum of finitely many irreducible $\cA(n)^{Sp(2n)}$-submodules of $\cA(n)$.
\end{cor}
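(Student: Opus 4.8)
The plan is to produce the desired strong generating set explicitly and to verify it by passing to associated graded rings, following the template of \cite{LIII,LV}. First I would record the module structure. By the Kac--Radul decomposition recalled in Section \ref{sec:ch}, since $\cA(n)$ is simple with finite-dimensional, $Sp(2n)$-invariant weight spaces, one has $\cA(n)\cong\bigoplus_E E\otimes\cA(n)^E$ as a module over $Sp(2n)\times\cA(n)^{Sp(2n)}$, each $\cA(n)^E$ being an irreducible $\cA(n)^{Sp(2n)}$-module; taking $G$-invariants gives $\cA(n)^G\cong\bigoplus_E E^G\otimes\cA(n)^E$, a direct sum of irreducible $\cA(n)^{Sp(2n)}$-modules. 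Next, by Theorem \ref{weylfinite} fix homogeneous generators $f_1,\dots,f_r$ of $(\bigwedge\bigoplus_{k=0}^m U_k)^G$ whose polarizations generate $R=\text{gr}(\cA(n)^G)$, and using the linear isomorphism $\cA(n)^G\cong\text{gr}(\cA(n)^G)=R$ lift each $f_i$ to a homogeneous element $F_i\in\cA(n)^G$ with $\phi_{d_i}(F_i)=f_i$. Since $F_i$ has finite weight it lies in the sum of only finitely many of the components $E^G\otimes\cA(n)^E$. Let $\cN\subseteq\cA(n)^G$ be the $\cA(n)^{Sp(2n)}$-submodule generated by $F_1,\dots,F_r$, i.e.\ the span of the iterated circle products $a_1(\ell_1)\cdots a_s(\ell_s)F_i$ with $a_j\in\cA(n)^{Sp(2n)}$; because the modes of $\cA(n)^{Sp(2n)}$ commute with $Sp(2n)$, they act within each summand $E\otimes\cA(n)^E$, so $\cN$ is contained in the direct sum of the finitely many irreducible submodules $\cA(n)^E$ in which some $F_i$ has a nonzero component. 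The claim to be proved is that $\cN$ is a strong generating set for $\cA(n)^G$.

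For this I would first note that the conformal vector $L^\cA$ of \eqref{virasorosf} is $Sp(2n)$-invariant, so $\partial=L^\cA(0)$ preserves $\cN$; hence $\cN$ is closed under $\partial$, and it suffices to show that the normally ordered monomials in elements of $\cN$ span $\cA(n)^G$. By the standard induction on the good filtration (as in the proof of Lemma \ref{reconlem}), this follows once we show that $\text{gr}(\cN)\subseteq R$ generates $R$ as a ring. Since $\text{gr}(\cN)$ contains $f_1,\dots,f_r$, Theorem \ref{weylfinite} reduces the problem to showing that $\text{gr}(\cN)$ is stable under all polarization operators on $R$, that is, under the induced action of $GL(\infty)$ on $\bigwedge\bigoplus_{k\geq0}U_k$.

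The step I expect to be the main obstacle is realizing the polarization operators through the $\cA(n)^{Sp(2n)}$-action. By \eqref{goodi}, each mode $\omega_{a,b}(\ell)$ with $\ell\geq0$ preserves the filtration degree of $\cA(n)$, and the endomorphism it induces on $\text{gr}(\cA(n))=\bigwedge\bigoplus_kU_k$ is the ``single contraction'' part of the operator: contracting one of the two fermionic factors of $\omega_{a,b}$ against a factor of a monomial and summing over the $Sp(2n)$-index replaces a factor $e^i_c$ or $f^i_c$ by a multiple of $e^i_a,e^i_b$ or $f^i_a,f^i_b$, with coefficients determined by $\ell$. Running over all $a,b\geq0$ and all $\ell\geq0$, these symbols are derivations of $\bigwedge\bigoplus_kU_k$ generating a Lie algebra that contains every polarization operator; carrying this out amounts to the bookkeeping of which contractions contribute at a given mode and disentangling the resulting signs and coefficients so as to isolate the individual generators of $GL(\infty)$, exactly as in \cite{LIII,LV}. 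Granting this, $\cN$ is closed under every $\omega_{a,b}(\ell)$ because $\omega_{a,b}\in\cA(n)^{Sp(2n)}$, hence $\text{gr}(\cN)$ is closed under the corresponding symbols and therefore under all polarizations; combined with Theorem \ref{weylfinite} this shows $\text{gr}(\cN)$ generates $R$ as a ring, which completes the proof.
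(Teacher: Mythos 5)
Your argument is correct and is essentially the paper's own (omitted) proof: the paper likewise combines the Kac--Radul/Howe decomposition \eqref{dlmdecomp} with Theorem \ref{weylfinite} and the realization of the polarization operators of $GL(\infty)$ as the degree-zero derivations induced on $\text{gr}(\cA(n))$ by the nonnegative modes of the $\omega_{a,b}$, referring for the details to Lemma 2 of \cite{LII}. The one step you defer --- the bookkeeping showing that the single-contraction symbols of the $\omega_{a,b}(\ell)$ span a Lie algebra containing all infinitesimal polarizations --- is exactly the content of that cited lemma, so your level of detail matches the paper's.
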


\begin{proof} This is a straightforward consequence of Corollary \ref{classificationmod}, Theorem \ref{weylfinite}, and the fact (see Remark 1.1 of \cite{KR}) that $\cA(n)$ has a Howe pair decomposition \begin{equation}\label{dlmdecomp} \cA(n) \cong \bigoplus_{\nu\in H} L(\nu)\otimes M^{\nu},\end{equation} where $H$ indexes the irreducible, finite-dimensional representations $L(\nu)$ of $Sp(2n)$, and the $M^{\nu}$ are inequivalent, irreducible highest-weight $\cA(n)^{Sp(2n)}$-modules. The argument is similar to the proof of Lemma 2 of \cite{LII} and is omitted. \end{proof}

Using the strong finite generation of $\cA(n)^{Sp(2n)}$ together with some finiteness properties of the irreducible, highest-weight $\cA(n)^{Sp(2n)}$-modules appearing in $\cA(n)$, we obtain the following result. This is analogous to Lemma 9 of \cite{LII}, and the proof is omitted.

\begin{lemma} \label{sixth} Let $\cM$ be an irreducible, highest-weight $\cA(n)^{Sp(2n)}$-submodule of $\cA(n)$. Given a subset $S\subset \cM$, let $\cM_S\subset \cM$ denote the subspace spanned by elements of the form $$:\omega_1(z)\cdots \omega_t(z) \alpha(z):,\qquad \omega_j(z)\in \cA(n)^{Sp(2n)},\qquad \alpha(z)\in S.$$ There exists a finite set $S\subset \cM$ such that $\cM = \cM_S$.\end{lemma}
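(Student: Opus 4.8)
The plan is to follow the proof of Lemma~9 of \cite{LII} line by line, with $\cA(n)^{Sp(2n)}$ in place of the invariant subalgebra used there, and with the supercommutative ring $R=(\bigwedge\bigoplus_{k\geq 0}U_k)^{Sp(2n)}$ in place of the polynomial invariant rings treated there. First I would pass to the associated graded. Restricting the good increasing filtration of $\cA(n)$ to $\cM$ makes $\text{gr}(\cM)$ a graded module over $R\cong\text{gr}(\cA(n)^{Sp(2n)})$, sitting inside $\text{gr}(\cA(n))\cong\bigwedge\bigoplus_{k\geq 0}U_k$. Since the Howe-pair decomposition \eqref{dlmdecomp} is compatible with the ($Sp(2n)$-invariant) filtration, $\cM$ is the multiplicity space of some isotypic component $L(\nu)\otimes M^\nu$, and $\text{gr}(\cM)$ is identified with the full module of covariants $\text{Hom}_{Sp(2n)}\big(L(\nu),\bigwedge\bigoplus_{k\geq 0}U_k\big)$. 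Besides the $R$-action it carries the derivation $\partial$ and the polarization operators $GL(\infty)$ from the discussion before Theorem~\ref{weylfinite}, all preserving the covariant type.

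Second, I would apply the classical finiteness principle in exactly the form used in the proof of Theorem~\ref{weylfinite}: by the polarization theorem (Theorem~2.5A of \cite{We}) in the exterior-algebra form coming from \cite{SI,SII}, there is an integer $m\geq 0$ such that $\text{gr}(\cM)$ is generated, as a module over $R$ together with the polarization operators, by the finite-dimensional subspace $\text{gr}(\cM)\cap\big(\bigwedge\bigoplus_{k=0}^{m}U_k\big)$. I would pick a finite spanning set $\{\bar\alpha_1,\dots,\bar\alpha_s\}$ of that subspace, lift each $\bar\alpha_i$ to $\alpha_i\in\cM$ with $\phi_{d_i}(\alpha_i)=\bar\alpha_i$ (where $d_i$ is its degree), and claim $S=\{\alpha_1,\dots,\alpha_s\}$ works.

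Third, I would carry out the lifting, which holds the real content. One would prove $\cM=\cM_S$ by induction on filtration degree (organized within each conformal weight, where only finitely many degrees occur), using: (i) each polarization operator on $\text{gr}(\cA(n))$ is realized, up to terms of strictly lower filtration degree, by a circle-product operator $\omega\circ_k$ with $k\geq0$ and $\omega\in\cA(n)^{Sp(2n)}$ --- the module analogue of \eqref{actionp} --- and, modulo such lower-order terms, these operators preserve $\cM_S$; and (ii) for homogeneous $x\in\cM$ of degree $d$, the symbol $\phi_d(x)$ is, by the previous step, an $R$-linear combination of finitely many polarizations of the $\bar\alpha_i$, which lifts --- using the reconstruction Lemmas~\ref{reconlem} and \ref{idealrecon} to get the leading terms right --- to $y\in\cM_S$ with $\phi_d(y)=\phi_d(x)$; then $x-y$ has smaller filtration degree and is handled inductively, the base case being immediate. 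The main obstacle is step (i): one must check that re-associating iterated normally ordered products, and applying the positive-mode ``polarizing'' operators, produces only correction terms of strictly smaller filtration degree, so that the induction closes. This is precisely the bookkeeping of Lemma~9 of \cite{LII}; it transfers without change here because $\cA(n)^{Sp(2n)}$ is strongly finitely generated (Theorem~\ref{maincor}) and $R$, with its covariant modules, obeys the same polarization principle as the classical rings treated in \cite{LII}.
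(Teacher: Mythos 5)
Your proposal is correct and follows exactly the route the paper intends: the paper gives no argument here beyond the remark that the result ``is analogous to Lemma~9 of \cite{LII}, and the proof is omitted,'' and your reconstruction --- identifying $\text{gr}(\cM)$ with a module of covariants via \eqref{dlmdecomp}, invoking the polarization finiteness as in Theorem~\ref{weylfinite}, and lifting by induction on filtration degree within each conformal weight using that positive circle products with elements of $\cA(n)^{Sp(2n)}$ realize the polarization operators modulo lower filtration degree --- is precisely the argument of that cited lemma transported to the exterior-algebra setting. The one step you defer (the bookkeeping showing the correction terms from re-associating normally ordered products stay in lower filtration degree) is also the step the paper defers, so your write-up is, if anything, more explicit than the source.
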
 

\begin{thm} \label{sfg} For any reductive group $G$ of automorphisms of $\cA(n)$, $\cA(n)^G$ is strongly finitely generated.\end{thm}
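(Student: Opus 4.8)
The plan is to combine Corollary~\ref{ordfg}, Lemma~\ref{sixth}, and the strong finite generation of $\cA(n)^{Sp(2n)}$ (Theorem~\ref{maincor}), exactly as for $\cS(n)$ and $\cF(n)$ in \cite{LV}. First I would use Corollary~\ref{ordfg} to fix a strong generating set $T$ for $\cA(n)^G$ contained in a finite direct sum $\cM^1\oplus\cdots\oplus\cM^s$, where each $\cM^i$ is an irreducible $\cA(n)^{Sp(2n)}$-submodule of $\cA(n)$ occurring in the Howe-pair decomposition \eqref{dlmdecomp}; in particular each $\cM^i\subseteq\cA(n)^G$, and by Corollary~\ref{classificationmod} each $\cM^i$ is a highest-weight $\cA(n)^{Sp(2n)}$-module, so Lemma~\ref{sixth} yields finite sets $S_i\subset\cM^i$ with $\cM^i=(\cM^i)_{S_i}$. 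Put $S=\bigcup_{i=1}^s S_i$ and $\cB=\{j^0,j^2,\dots,j^{2n-2}\}$; both are finite subsets of $\cA(n)^G$, and by Theorem~\ref{maincor} the set $\cB$ strongly generates $\cA(n)^{Sp(2n)}$. The claim to be proved is that $\cB\cup S$ strongly generates $\cA(n)^G$.

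For the proof I would let $\cC\subseteq\cA(n)^G$ be the span of all iterated normally ordered products of the elements of $\cB\cup S$ and their derivatives, so that $\cC$ is closed under $\partial$ and under the Wick product. Since $\cB$ strongly generates $\cA(n)^{Sp(2n)}$ we have $\cA(n)^{Sp(2n)}\subseteq\cC$; since in addition each $S_i\subseteq\cC$ and $\cC$ is closed under Wick products, every element $:\omega_1\cdots\omega_t\alpha:$ with $\omega_j\in\cA(n)^{Sp(2n)}$ and $\alpha\in S_i$ lies in $\cC$, hence $\cM^i=(\cM^i)_{S_i}\subseteq\cC$ for every $i$, and therefore $T\subseteq\cM^1\oplus\cdots\oplus\cM^s\subseteq\cC$. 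Finally, $T$ strongly generates $\cA(n)^G$, so every element of $\cA(n)^G$ is a linear combination of normally ordered monomials in $T$ and their derivatives; as $T\subseteq\cC$ and $\cC$ is closed under $\partial$ and the Wick product, all such elements lie in $\cC$. Hence $\cC=\cA(n)^G$, which is precisely the assertion that the finite set $\cB\cup S$ strongly generates $\cA(n)^G$.

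The only point that needs care — and it is a routine one — is reconciling the various nesting conventions for iterated Wick products: the elements produced by Lemma~\ref{sixth} and by the strong generation of $\cA(n)^{Sp(2n)}$ appear as nested products with an arbitrary parenthesization, whereas strong generation is phrased in terms of the standard monomials $:\partial^{k_1}a_{i_1}\cdots\partial^{k_m}a_{i_m}:$. Reassociating and reordering such a product into standard form introduces, via quasi-associativity and the commutator formula, only corrections built from the same letters through the circle products $\circ_p$ with $p\ge 0$, which strictly lower the conformal weight; an induction on weight then confines these corrections to the span of standard monomials in $\cB\cup S$. This bookkeeping is carried out in the arguments underlying \cite{LII,LV}, so I would cite it rather than repeat it, in keeping with the remark at the start of this section that some details are omitted.
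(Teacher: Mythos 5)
Your argument is correct and follows essentially the same route as the paper's proof: Corollary \ref{ordfg} places a strong generating set inside finitely many irreducible highest-weight $\cA(n)^{Sp(2n)}$-modules, Lemma \ref{sixth} replaces each module by a finite set $S_i$, and adjoining $\{j^0,j^2,\dots,j^{2n-2}\}$ yields a strong finite generating set. Your explicit closure argument for $\cC$ and the remark on reassociating nested Wick products merely spell out the step the paper leaves implicit ("it follows that $S$ is a strong, finite generating set"), so no substantive difference remains.
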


\begin{proof} By Corollary \ref{ordfg}, we can find $f_1(z),\dots, f_r(z) \in \cA(n)^G$ such that $\text{gr}(\cA(n))^G$ is generated by the corresponding polynomials $f_1,\dots, f_r\in \text{gr}(\cA(n))^G$, together with their polarizations. We may assume that each $f_i(z)$ lies in an irreducible, highest-weight $\cA(n)^{Sp(2n)}$-module $\cM_i$ of the form $L(\nu)_{\mu_0}\otimes M^{\nu}$, where $L(\nu)_{\mu_0}$ is a trivial, one-dimensional $G$-module. Furthermore, we may assume that $f_1(z),\dots, f_r(z)$ are highest-weight vectors for the action of $\cA(n)^{Sp(2n)}$. For each $\cM_i$, choose a finite set $S_i \subset \cM_i$ such that $\cM_i = (\cM_i)_{S_i}$, using Lemma \ref{sixth}. Define $$ S=\{j^0,j^2,\dots, j^{2n-2} \} \cup \big(\bigcup_{i=1}^r S_i \big).$$ Since $\{j^0,j^2,\dots, j^{2n-2}\}$ strongly generates $\cA(n)^{Sp(2n)}$, and $\bigoplus_{i=1}^r \cM_i$ contains 
a strong generating set for $\cA(n)^G$, it follows that $S$ is a strong, finite generating set for $\cA(n)^G$. \end{proof}

\end{document}